\definecolor{darkblue}{rgb}{0.0,0.0,0.3}
\def\boxit{$\sqcap\kern-8pt\sqcup$}
\newcommand{\R}{\mathbb{R}}
\theoremstyle{plain}% default
\newtheorem{theorem}{Theorem}[section]
\newtheorem{lemma}[theorem]{Lemma}
\newtheorem{proposition}[theorem]{Proposition}
\newtheorem{corollary}[theorem]{Corollary}
\newtheorem{remark}[theorem]{Remark}
\theoremstyle{definition}
\newtheorem{conjecture}[theorem]{Conjecture}
\newtheorem*{conjecture*}{Conjecture}
\newtheorem*{problem*}{Problem}
\newcommand{\M}{\mathcal{M}}
\title{New results on $k$-Roudneff's conjecture}
\dedicatory{}
\author{Rangel Hern\'andez-Ortiz}
\address{Universitat Rovira i Virgili, Departament d'Enginyeria Inform\`{a}tica i Matem\`{a}tiques, Av. Pa\"{i}sos Catalans 26, 43007 Tarragona, Spain}
\email{rangel.hernandez@urv.cat}
\author{Luis Pedro Montejano}
\address{Universitat Rovira i Virgili, Departament d'Enginyeria Inform\`{a}tica i Matem\`{a}tiques, Av. Pa\"{i}sos Catalans 26, 43007 Tarragona, Spain}
\email{luispedro.montejano@urv.cat}
\keywords{Oriented matroids, $k$-neighborly reorientation, alternating oriented matroid, Lawrence oriented matroids}
\subjclass[2010]{52C40, 52B40, 05B35, O5-XX, %05Dxx
68Rxx, 52C35}
\date{\today}
\begin{document}

\begin{abstract}
In this paper we study the number of $k$-neighborly reorientations of  an oriented matroid, leading to study $k$-Roudneff’s conjecture, the case $k = 1$ being the original statement conjectured in 1991. We first prove the conjecture for the family of Lawrence oriented matroids (LOMs) with even rank $r=2k+2$ and also for low ranks by computer. Next, we provide a general upper bound for the number of $k$-neighborly reorientations of any LOM. Finally, we prove that for any $k\ge1$ and any %fixed rank $rge 4$ 
oriented matroid on $n$ elements, $k$-Roudneff's conjecture holds asymptotically as $n\rightarrow \infty$ and thus giving more credit to the conjecture.

%In this paper we study the number of $k$-neighborly reorientations of oriented matroids, principally in the family of Lawrence oriented matroids (LOMs). This leads to study the $k$-Roudneff’s conjecture, the case $k = 1$ being the original statement. We prove the conjecture for LOMs with even rank $r=2k+1$. We provide an upper bound for the number of $k$-neighborly reorientations for LOMs and prove $k$-Roudneff's conjecture for LOMs with low ranks by computer. Additionally, we prove that $k$-Roudneff’s conjecture holds asymptotically.

%We study the %existence and the 
%number of $k$-neighborly reorientations of a Lawrence oriented matroids (LOMs). This leads to study the $k$-Roudneff’s conjecture, the case $k = 1$ being the original statement. We prove the conjecture for LOMs with even rank $r$, $k=\frac{r-2}{2}$ and $n \geq 6k + 2$. We provide an upper bound for the number of $k$-neighborly reorientations for LOMs and prove $k$-Roudneff's conjecture for LOMs with low ranks by computer. Additionally, we prove that $k$-Roudneff’s conjecture holds asymptotically.
\end{abstract}

\maketitle

\section{Introduction}\label{intro}

In this paper we study the number of $k$-neighborly reorientations of a rank $r$ oriented matroid $\M$ on $n$ elements, denoted by $f_{\M}(r,n,k)$. We obtain some partial results on $k$-Roudneff's conjecture (proposed in \cite{HKM24}), the case $k=1$ being a well-known conjecture stated originally in terms of projective pseudohyperplane arrangements \cite{R91}.
\smallskip

Let us briefly give some basic notions and definitions on oriented matroid theory needed for the rest of the paper. We refer the reader to \cite{BVSWZ99} for background on oriented matroid theory.
%In this paper we study the $k$-Roudneff  conjecture (CITA) which is about the number of $k$-neighborly reorientations of a rank $r$ oriented matroid $\mathcal{M}$ on $n$ elements, the case $k=1$ being a well-known conjecture knowing as Roudneff's conjecture (CITA).
%Let us briefly give some basic notions and definitions on oriented matroid theory needed for the rest of the paper. We refer the reader to [3] for background on oriented matroid theory
%We assume some knowledge and standard notation of the theory of oriented matroids, for further reference the reader can consult the standard reference  \cite{BVSWZ99}.
 Given an \emph{oriented matroid}  $\M=(E,\mathcal{C})$ of a finite \emph{ground set} $E$ and a set of \emph{sign-vectors} $\mathcal{C}\subseteq \{+,-,0\}^E$ called \emph{circuits}, the \emph{size} of a member  $X\in\{+,-,0\}^E$ is the size of its \emph{support} $\underline{X}=\{e\in E\mid X_e\neq 0\}$. Throughout the paper all oriented matroids are considered \emph{simple}, i.e., all circuits have size at least $3$. The \emph{rank} $r$ of $\M$ is the size of a largest set not containing a circuit of $\M$.  An oriented matroid of rank $r$ is called \emph{uniform} if all its circuits are of size $r+1$.
For a \emph{sign-vector} $X\in\{+,-,0\}^E$ on ground set $E$ we denote by $X^+:=\{e\in E\mid X_e=+\}$ the set of \emph{positive elements} of $X$ and $X^-:=\{e\in E\mid X_e=-\}$ its set of
\emph{negative elements}. Hence, the set $\underline{X}=X^+\cup X^-$ is the support of $X$. For a subset $R\subseteq E$ the \emph{reorientation of $R$} is the oriented matroid $_{-R}\M$ obtained from $\M$ by reversing the sign of $X_e$ for every $e\in R$ and $X\in\mathcal{C}$. The set of all oriented matroids that can be obtained this way from $\M$ is the \emph{reorientation class} $[\M]$ of $\M$.
 We denote by $-X$ the sign-vector $_{-E}X$ where all signs are reversed, i.e., such that $-X^+=X^-$ and $-X^-=X^+$.
 We say that $X$ is \emph{positive} if $X^-=\emptyset$ and $\underline{X}\neq\emptyset$.

\smallskip

Given an oriented matroid $\M=(E,\mathcal{C})$ the \emph{contraction} of one element $e\in E$ is the oriented matroid $\mathcal{M}/e=(E\setminus \{e\} ,\mathcal{C}/e)$, where $\mathcal{C}/e$ is the set of support-minimal sign-vectors from $\{X\setminus e\mid X\in\mathcal{C}\}\setminus\{0\}$ where $X\setminus e$ is the sign-vector on groundset $E\setminus \{e\}$ such that $(X\setminus e)_f=X_f$ for all $f\in E\setminus \{e\}$. If $\M$ is a uniform oriented matroid of rank $r$, then $\mathcal{M}/e$ is uniform of rank $\max\{0,r-1\}$.
The \emph{deletion} of $e$ from $\mathcal{M}$ is the oriented matroid $\mathcal{M}\setminus e=(E\setminus \{e\} ,\mathcal{C}\setminus e)$, where $\mathcal{C}\setminus e=\{X\setminus e\mid X\in\mathcal{C}, \underline{X}\cap \{e\}=\emptyset\}\setminus\{0\}$. If $\M$ is a uniform oriented matroid of rank $r$, then $\mathcal{M}/e$ is uniform of rank $\min\{|E\setminus \{e\}|,r\}$.
% Ver Oxley 3.1.9

\smallskip
An oriented matroid $\mathcal{M}=(E,\mathcal{C})$ is \emph{acyclic} if every circuit has positive and negative signs, i.e., if $|X^+|>0$ and $|X^-|>0$ for every circuit $X\in \mathcal{C}$. Given an integer $k\ge 0$, we say that $\mathcal{M}$ is    \emph{$k$-neighborly} if  $|X^+|>k$ and $|X^-|>k$ for every $X\in \mathcal{C}$, in other words, if for every subset $S\subseteq E$ of size at most $k$ the reorientation $_{-S}\M$ is acyclic.
A reorientation $R$ of $\mathcal{M}$ is a  \emph{$k$-neighborly reorientation} if $_{-R}\M$ is $k$-neighborly.
%Along the course of the paper we will provide and use an equivalent description of being $k$-neighborly in terms of topes via the orthogonality (\Cref{equivalentforms})
It follows from the definition that a $k$-neighborly oriented matroid is  $k'$-neighborly for all $0\leq k'\leq k$.
Note that $\M$ is $0$-neighborly if and only if $\M$ is acyclic. If $\M$ is $1$-neighborly, then $\M$ is called \emph{matroid polytope}.
If $\M$ is a rank $r$, $k$-neighborly  oriented matroid, then $r\ge 2k+1$. If $r=2k+1$ then $\M$ is often just called \emph{neighborly} (i.e., when $k$ is the maximum possible).
%The rank $r$ of a $k$-neighborly oriented matroid $\M$ can be at least $2k+1$ (i.e., when $k$ is the maximum possible)
%If $\M$ has rank $r$, then it can be at most? $k$-neighborly  for $r=2k+1$  
%\smallskip
 There is quite some work about {neighborly oriented matroids}, starting with Sturmfels~\cite{S88} and~\cite[Section 9.4]{BVSWZ99} but also more recent works such as~\cite{MP15,P13}. In the realizable setting, a $k$-neighborly matroid   $\M$ is a $k$-neighborly polytope, important mathematical objects of study in discrete geometry (i.e., a polytope such that every set of at most $k$ vertices are the vertices of a face). Also in the realizable setting, a  $k$-neighborly reorientation of  $\M$ corresponds to a permissible projective transformation that sends  $\M$ to a $k$-neighborly polytope (see \cite[Theorem 1.2]{CS85} and \cite{GL15}).

%\subsection{The alternating oriented matroid and $k$-Roudneff's conjecture}\label{intro_k_Roudf}

%In particular, cyclic arrangements of $n$ hyperplanes in $\p^d$ are equivalent to {\em alternating oriented matroids} of rank $r=d+1$ on $n$ elements.

%In this paper we study the number of $k$-neighborly reorientations of a rank $r$ oriented matroid  {$\mathcal{M}$} on $n$ elements,  denoted by $f_{\M}(r,n,k)$. In particular we study a well-known conjecture on $1$-neighborly reorientations of oriented matroids and their generalization to arbitrary $k$, knowing as  the $k$-Roudneff conjecture.
\smallskip

%The $0$-neighborly reorientations are just the acyclic reorientations and if $\mathcal{M}$ is uniform it is well-known that  $f_\mathcal{M}(r,n,0)=2\sum_{i=0}^{r-1}{{n-1}\choose i}$,  see e.g.~\cite{Cor80}. Then, as mentioned in the introduction, for any rank $r$ oriented matroid $\M$ on $n$ elements (not necessarily uniform), we have $f_\mathcal{M}(r,n,0)\le 2\sum_{i=0}^{r-1}{{n-1}\choose i}$ (see \cite[Theorem 3.16]{HKM24}). IN PROGRESS...

%\begin{remark}
 % Let $\mathcal{M}$ be a LOM of rank $r$ on $n\geq r+1$ elements, then $ f_\mathcal{M}(r,n,0)=\sum\limits_{i=0}^{r-1} {n-1 \choose i}$
   %\end{remark} 

\subsection{The alternating oriented matroid and the value of $c_r(n,k)$.}
The \emph{cyclic polytope} of dimension $d$ with $n$ vertices %$C_d(t_1,\ldots, t_n)$,
 discovered by Carath\'eodory \cite{Car}, is the  convex hull in $\R^d$ of $n\ge d+1\ge3$ different points $x(t_1),\dots ,x(t_n)$ of the moment curve $\mu: \R\longrightarrow\R^d, \ t \mapsto (t,t^2,\dots ,t^d)$. Cyclic polytopes are neighborly polytopes and play an important role in the combinatorial  convex geometry due to their
connection with certain extremal problems, for example, the upper bound theorem due to McMullen \cite{Mac}.
The oriented matroid associated to the cyclic polytope of dimension $d=r-1$ with $n$ elements is the \emph{alternating oriented matroid} ${C}_r(n)$. It is the realizable uniform oriented matroid of rank $r$ and ground set $E=[n]:=\{1, \ldots,n\}$ such that every circuit $X\in \mathcal{C}$ is \emph{alternating}, i.e., $X_{i_j}=-X_{i_{j+1}}$ for all $1\leq j\leq r$ if $\underline{X}=\{i_1, \ldots, i_{r+1}\}$ and $i_1<\ldots<i_{r+1}$.

\smallskip

Let denote  $c_r(n,k)=f_{{C}_r(n)}(r,n,k)$, i.e., $c_r(n,k)$ is the number of $k$-neighborly reorientations of the rank r alternating oriented matroid on $n$ elements.
%Let denote by $f_{\M}(r,n,k)$ the number of $k$-neighborly reorientations of the rank $r$ oriented matroid  {$\mathcal{M}$} on $n$ elements and denote $c_r(n,k)=f_{{C}_r(n)}(r,n,k)$, the number of $k$-neighborly reorientations of the alternating oriented matroid.
In 1991 Roudneff \cite{R91} proved that $c_r(n,1)\ge {2}\sum_{i=0}^{r-3}\binom{n-1}{i}$ and that $c_r(n,1)={2}\sum_{i=0}^{r-3}\binom{n-1}{i}$ if $n\ge 2r-1$.
Recently,  it was computed the exact number of $c_r(n,k)$ for $n\ge 2(r-k)+1$ and $r\ge 2k+1\ge3$\cite[Theorem 3.11]{HKM24}:  %and also for $n=r+1$  \cite{HKM24}:  
\begin{equation}\label{formula-ciclico}
    c_r(n,k) =2\sum\limits_{i=0}^{r-1-2k} \binom{n-1}{i}.
\end{equation}

%(see Theorem \ref{formula_neighb} and Proposition \ref{o_i(r+1)}) 
%\begin{equation*}
 %  c_r(n,k) =\left\{
%	       \begin{array}{lll}
 %       2\sum\limits_{i=0}^{r-2k-1} \binom{n-1}{i}        & \text{ if    $n\ge 2(r-k)+1$; } \\ \ \\
%		2\sum\limits_{i=k}^{\lfloor\frac{r-1}{2}\rfloor} {r+1 \choose i+1}        & \text{ if $n=r+1$ and    $k<\frac{r-1}{2}$; } \\ \ \\
%		{r+1 \choose k+1}     & \text{ if $n=r+1$ and    $k=\frac{r-1}{2}$. }\\

%	       \end{array}
%	     \right.
%\end{equation*}

%So, the value of $c_r(n,k)$ is unknown for each $n=r+2,\ldots,2(r-k)$ when $r$ is even and $2\le k\le \frac{r-2}{2}$ and unknown for each $n=r+3,\ldots,2(r-k)$ when $r$ is odd and $2\le k\le \frac{r-1}{2}$.
In fact, in \cite{HKM24}, Theorem 3.11 proved that $o(C_r(n),i)=2\binom{n}{r-1-2i}$ for every $i=k,\ldots,\lfloor\frac{r-1}{2}\rfloor$ and $n\ge 2(r-k)+1$, where $o(C_r(n),i)$ was defined as the
the number of $i$-neighborly reorientations of $C_r(n)$ that are not $(i+1)$-neighborly. So, for $n\ge 2(r-k)+1$  we have that $c_r(n,k)=2\sum\limits_{i=k}^{\lfloor\frac{r-1}{2}\rfloor} \binom{n}{r-1-2i}=2\sum\limits_{i=0}^{r-1-2k} \binom{n-1}{i}
$ as mentioned above. We then notice that the value of $c_r(n,k)$ is unknown for $n\le2(r-k)$.

\subsection{$k$-Roudneff's conjecture}\label{sub-roudneff}
Roudneff conjectured that the rank $r$ alternating oriented matroid on $n\ge 2r-1$ elements has the maximum number of $1$-neighborly reorientations over all rank $r$ oriented matroids on $n$ elements \cite[Conjecture 2.2]{R91}.
Here we point out that the conjecture was stated originally in terms of projective pseudohyperplane arrangements and we state here in terms of oriented matroids. Many of the combinatorial properties of projective pseudohyperplane arrangements can be studied in the language of oriented matroids. Indeed, an oriented matroid on $n$ elements of rank $r$ is naturally associated with some arrangement of $n$ (pseudo) hyperplanes in the projective space of dimension $d=r-1$, and conversely the (simple) oriented matroids %simple implica "without loops or parallel elements"
are precisely those associated with some arrangement of pseudo hyperplanes \cite{FL78}. We refer the reader to \cite{HKM24} for more details on the translation of Roudneff's conjecture from arrangements to oriented matroids.

\begin{conjecture}[Roudneff 1991]\label{conjRoudff}
For any rank $r\ge 3$ oriented matroid $\mathcal{M}$ on $n\ge 2r-1$ elements it holds $f_{\M}(r,n,1)\le c_r(n,1)$.
\end{conjecture}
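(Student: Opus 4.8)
The plan is to prove Conjecture~\ref{conjRoudff} by induction on $n$ through the deletion--contraction operations, comparing an arbitrary rank $r$ simple oriented matroid $\M$ on $n$ elements against the alternating oriented matroid $C_r(n)$. Throughout we may assume that every circuit of $\M$ has size at least $4$, since otherwise no reorientation is $1$-neighborly and $f_{\M}(r,n,1)=0$ trivially satisfies the bound. The starting point is the numerical identity obtained from Pascal's rule applied to \eqref{formula-ciclico},
\begin{equation}\label{pascal}
c_r(n,1)=c_r(n-1,1)+c_{r-1}(n-1,1),
\end{equation}
which mirrors $C_r(n)\setminus n=C_r(n-1)$ and $C_r(n)/n=C_{r-1}(n-1)$. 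Accordingly, the core of the argument is a matching inequality for arbitrary $\M$: choosing an element $e$ that is not a coloop (which exists since $n\ge 2r-1>r$), I would establish
\begin{equation}\label{recursion}
f_{\M}(r,n,1)\le f_{\M\setminus e}(r,n-1,1)+f_{\M/e}(r-1,n-1,1).
\end{equation}

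To prove \eqref{recursion} I would analyse the restriction map $\phi$ sending a $1$-neighborly reorientation $R$ of $\M$ to $R\cap(E\setminus e)$. Since the circuits of $\M\setminus e$ are exactly the circuits of $\M$ avoiding $e$, the set $R\cap(E\setminus e)$ is automatically a $1$-neighborly reorientation of $\M\setminus e$, so $\phi$ is well defined and each of its fibres has size at most $2$ (according to whether $e\in R$). Writing $D$ for the set of images whose fibre has size exactly $2$, we obtain $f_{\M}(r,n,1)\le f_{\M\setminus e}(r,n-1,1)+|D|$. The decisive step is an injection $D\hookrightarrow\{1\text{-neighborly reorientations of }\M/e\}$. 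If $R'\in D$ then both ${}_{-R'}\M$ and ${}_{-(R'\cup e)}\M$ are $1$-neighborly; for a circuit $X$ of $\M$ through $e$ these two reorientations agree off $e$ and differ only in the sign of $e$, so comparing their positive and negative elements (each at least two) forces $({}_{-R'}X)\setminus e$ to retain at least two positive and at least two negative elements. As every circuit of $\M/e$ is of the form $X\setminus e$, this shows $R'$ is a $1$-neighborly reorientation of $\M/e$, whence $|D|\le f_{\M/e}(r-1,n-1,1)$; note that the size-$4$ assumption on circuits of $\M$ guarantees that $\M/e$ is again simple. Combining the two bounds yields \eqref{recursion}.

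With \eqref{recursion} and \eqref{pascal} in hand, the induction on $n$ (carried out simultaneously for all ranks) closes whenever both reduced instances fall within the range covered by \eqref{formula-ciclico}. The contraction term is always safe, since $\M/e$ has rank $r-1$ on $n-1\ge 2(r-1)-1$ elements; the deletion term is safe precisely when $n-1\ge 2r-1$, that is for $n\ge 2r$. Thus for every $n\ge 2r$ the inductive hypothesis gives $f_{\M\setminus e}(r,n-1,1)\le c_r(n-1,1)$ and $f_{\M/e}(r-1,n-1,1)\le c_{r-1}(n-1,1)$, and \eqref{recursion} together with \eqref{pascal} yields $f_{\M}(r,n,1)\le c_r(n,1)$. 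What remains are the boundary instances $n=2r-1$, one for each rank; the smallest of these, $r=3$ and $n=5$, is elementary, since a simple rank $3$ oriented matroid on $n\ge 5$ elements admits at most $c_3(n,1)=2$ matroid-polytope reorientations, namely those realising the convex polygon and its opposite.

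The main obstacle is exactly the family of boundary cases $n=2r-1$ for $r\ge 4$, which the recursion cannot reach: there the deletion $\M\setminus e$ lives on $n-1=2r-2$ elements, a regime in which the extremal value is not given by \eqref{formula-ciclico} and, as observed after \eqref{formula-ciclico}, is not even known for $C_r$ itself. Consequently the tight instance $n=2r-1$ must be settled by a self-contained argument that exploits the rigidity of the alternating oriented matroid on the minimum admissible number of elements, rather than by reduction to smaller ground sets. One would presumably first reduce to uniform oriented matroids (general position should only increase the count, as smaller circuits impose stricter neighborliness constraints) and then control the global structure of the tope set directly, for instance through the Gale evenness condition characterising $C_r(n)$. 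It is precisely this base case that keeps the conjecture open, and the partial results of this paper---the asymptotic regime $n\to\infty$, where only the leading term of $c_r(n,1)$ must be matched, and the Lawrence oriented matroids, where the relevant reorientations can be enumerated explicitly---can be read as confirmations of the conjecture in situations that sidestep this boundary difficulty.
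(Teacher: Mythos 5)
The statement you were asked to prove is a \emph{conjecture}: the paper contains no proof of it, and explicitly records that it remains open for $r\ge 6$ (only $r\le 5$, the Lawrence class, and various asymptotic or low-rank cases are known). Your proposal is therefore not being measured against a proof in the paper, and, as you yourself concede in the final paragraph, it is not a proof either. The parts you do carry out are correct and are in fact the known reduction: the deletion--contraction inequality $f_{\M}(r,n,1)\le f_{\M\setminus e}(r,n-1,1)+f_{\M/e}(r-1,n-1,1)$ is Lemma \ref{thm:onlythebaseishard} (Lemma 3.15 of \cite{HKM24}), the Pascal identity $c_r(n,1)=c_r(n-1,1)+c_{r-1}(n-1,1)$ is exactly what the paper uses in Proposition \ref{coro-LOMs}, and the resulting reduction to the tight instances $n=2(r-k)+1$ (here $n=2r-1$) is Theorem \ref{prop:onlythebaseishard} (Theorem 3.16 of \cite{HKM24}). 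Your fibre argument for the contraction term is sound: if both $_{-R'}\M$ and $_{-(R'\cup e)}\M$ are $1$-neighborly, then every circuit of $\M/e$ inherits at least two positive and two negative elements, so $R'$ is a $1$-neighborly reorientation of $\M/e$.

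The genuine gap is the one you name but do not close: the base cases $n=2r-1$ for each $r\ge 6$. These are not a routine boundary check --- they are the entire open content of the conjecture, and the reduction above shows that \emph{everything} funnels into them. The cases $r=4$ and $r=5$ each required a dedicated paper (\cite{R99}, \cite{HOKMS23}), and no ``self-contained argument exploiting the rigidity of $C_r(2r-1)$'' is known in general; the suggestion to use Gale evenness or to control the tope set directly is a research programme, not a step. So the proposal should be read as a correct account of why the conjecture reduces to finitely many tight instances per rank (which the present paper exploits, e.g.\ in its computer verifications for LOMs), together with an accurate identification of where the difficulty lies --- but it does not establish the statement.
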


The conjecture remains open for $r\ge6$. The case $r=3$ is not difficult to prove. Ram\'irez Alfons\'in proved the case $r=4$ in 1999 \cite{R99} and recently Hern\'andez-Ortiz et al. proved it for $r=5$~\cite{HOKMS23}. In 2015 Montejano and Ram\'irez Alfons\'in proved the conjecture for Lawrence oriented matroids~\cite{MR15}.  Furthermore, B\'ar\'any et al.\cite{BBLP95} proved that for fixed $r$, the number of $1$-neighborly reorientations of any rank $r$ realizable oriented matroid on $n$ elements is $2\binom{n-1}{r-3}+2O(n^{r-4})$, i.e., Roudneff's conjecture holds asymptotically in the realizable setting (here we also point out that the authors proved this result in terms of hyperplane arrangements in the Euclidean space and not in terms of realizable oriented matroids). %(Obs: lo probaron para arreglos 'EUCLIDEAN' y no 'PROYECTIVOs' por eso les da una "n" en lugar de "n-1". Ver paper Roudf, inicio de pág 188)
%Furthermore, B\'ar\'any et al. showed %in 1995 that for realizable oriented matroids, the number of $1$-neighborly reorientations is $2\binom{n}{r-3}+O(n^{r-4})$, showing that Roudneff's conjecture holds asymptotically in the realizable setting \cite{BBLP95}.
 In this manuscript, we prove  that Roudneff's conjecture holds asymptotically  for all oriented matroids (Section \ref{asymptotics},  Corollary \ref{asintotico_k-Roudf}).

\smallskip

Conjecture \ref{conjRoudff} was stated for $n\ge 2r-1$ probably because in 1991 the value of $c_r(n,1)$ was unknown for $n<2r-1$ but in 2001 was computed its value for $n\ge r+1$ \cite{FR01}. So, in view of Roudneff's conjecture, the authors asked in \cite{MR15} if Roudneff's conjectured  holds for $n\ge r+1$ \cite[Question 2]{MR15} and again, it turns out that it is true for $r\leq 5$~\cite{HOKMS23} and for  Lawrence oriented matroids~\cite{MR15}.
%\smallskip
 For an integer $k\ge0$ it was proposed in \cite{HKM24} the following strengthening of \Cref{conjRoudff}.
\begin{conjecture}[$k$-Roudneff's conjecture]\label{conjkRoudneff} 
For any rank $r\ge 2k+1\ge 3$ oriented matroid $\mathcal{M}$ on $n\ge r+1$  elements,  $$f_{\mathcal{M}}(r,n,k)\le c_r(n,k).$$
 \end{conjecture}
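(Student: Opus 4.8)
The plan is to recast $f_{\M}(r,n,k)$ as a counting problem on acyclic reorientations, prove a deletion--contraction inequality relating it to the two minors of $\M$ at an element, and then try to close an induction on $n$ with the alternating matroid as the extremal base. Writing $R\triangle S$ for symmetric difference, the definition says that $R\subseteq E$ is a $k$-neighborly reorientation exactly when ${}_{-(R\triangle S)}\M$ is acyclic for every $S\subseteq E$ with $|S|\le k$, since ${}_{-(R\triangle S)}\M={}_{-S}({}_{-R}\M)$. Thus $f_{\M}(r,n,k)$ counts the subsets $R$ whose entire ball $\{R\triangle S:|S|\le k\}$ consists of acyclic reorientations of $\M$. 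For $k=0$ this is the number of acyclic reorientations, an invariant of the underlying matroid; hence every uniform rank $r$ oriented matroid on $n$ elements has $f_{\M}(r,n,0)=c_r(n,0)$ and every other one has at most this many, settling the case $k=0$.

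First I would reduce to the case of uniform $\M$ (which keeps every minor below simple and uniform), assuming as is standard that the maximum is attained at a uniform oriented matroid. Fixing $e\in E$, I would then establish
\begin{equation}\label{eq:dc}
f_{\M}(r,n,k)\le f_{\M\setminus e}(r,n-1,k)+f_{\M/e}(r-1,n-1,k).
\end{equation}
Both inputs are local. If $R$ is $k$-neighborly for $\M$, then $R\setminus\{e\}$ is $k$-neighborly for $\M\setminus e$: for $S\subseteq E\setminus\{e\}$ with $|S|\le k$ the set $R\triangle S$ is acyclic for $\M$, and deleting $e$ preserves acyclicity. If, for $R'\subseteq E\setminus\{e\}$, both lifts $R'$ and $R'\cup\{e\}$ are $k$-neighborly for $\M$, then $R'$ is $k$-neighborly for $\M/e$: a set is acyclic for $\M/e$ precisely when both of its two $e$-lifts are acyclic for $\M$, and for $|S|\le k$ these lifts are $R'\triangle S$ and $(R'\cup\{e\})\triangle S$, which lie in the respective balls. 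Partitioning the $k$-neighborly reorientations of $\M$ by the restriction $R\mapsto R\setminus\{e\}$, each fibre has size at most two; fibres of size at least one are charged to $\M\setminus e$ and the surplus of the fibres of size two is charged injectively to $\M/e$, giving \eqref{eq:dc}.

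It remains to run the induction on $n$, with base case $n=r+1$ (where $\M$ has a single circuit up to sign and a direct count gives $f_{\M}(r,r+1,k)=c_r(r+1,k)$) and with the contraction term disappearing once $r=2k+1$. Applying the inductive hypothesis to the two minors turns \eqref{eq:dc} into $f_{\M}(r,n,k)\le c_r(n-1,k)+c_{r-1}(n-1,k)$, and the induction closes precisely when $c_r(n,k)=c_r(n-1,k)+c_{r-1}(n-1,k)$. By \eqref{formula-ciclico} this recursion holds for $n\ge 2(r-k)+1$, which recovers the already known range of the conjecture. The main obstacle is the remaining range $r+1\le n\le 2(r-k)$, where the recursion is in general strict and \eqref{eq:dc} is far from tight: for instance $c_3(5,1)=2$ while $c_3(4,1)+c_2(4,1)=6$, so deletion--contraction by itself loses too much. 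Closing this gap demands a genuinely non-local analysis of which $k$-neighborly reorientations of $\M\setminus e$ fail to lift to $\M$ when $n$ is small; quantifying this defect for the alternating matroid---equivalently, determining $c_r(n,k)$ for $n\le 2(r-k)$---is the crux of the conjecture and the reason it is still open.
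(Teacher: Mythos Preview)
This statement is Conjecture~\ref{conjkRoudneff}; the paper does not prove it, and neither do you---you correctly conclude at the end that it remains open. What you have rediscovered is the deletion--contraction machinery the paper quotes from~\cite{HKM24}: your inequality $f_{\M}(r,n,k)\le f_{\M\setminus e}(r,n-1,k)+f_{\M/e}(r-1,n-1,k)$ is Lemma~\ref{thm:onlythebaseishard}, and the reduction you extract from it is Theorem~\ref{prop:onlythebaseishard}. So your approach coincides with the known one.

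Two corrections. First, the biconditional ``a set is acyclic for $\M/e$ precisely when both of its $e$-lifts are acyclic for $\M$'' is false: circuits of $\M$ that avoid $e$ are invisible in $\M/e$, so acyclicity in the contraction says nothing about them in $\M$. Only the implication you actually use (both lifts acyclic in $\M$ $\Rightarrow$ acyclic in $\M/e$) is valid, and your charging argument survives intact. Second, the phrase ``recovers the already known range of the conjecture'' overstates what is known. The identity $c_r(n,k)=c_r(n-1,k)+c_{r-1}(n-1,k)$ does hold for $n\ge 2(r-k)+1$, but your double induction still needs as base cases the values $n'=2(r'-k)+1$ for every $r'\le r$, and these are open in general---that is precisely the content of Theorem~\ref{prop:onlythebaseishard}, which is a reduction, not a proof. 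There is no ``already known range'' beyond the sporadic cases listed after Remark~\ref{remarkk-Roudff}; the obstacle you locate at small $n$ is the whole conjecture.
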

\smallskip

Firstly, notice that $f_{\mathcal{M}}(r,n,k)=f_{\mathcal{M}'}(r,n,k)$ if $\mathcal{M}'\in [\M]$. So, it is enough to check Conjecture \ref{conjkRoudneff} for the reorientation classes $[\M]$ of $\M$. Secondly, every rank $r$ oriented matroid $\mathcal{M}$ on $n$ elements can be perturbed to become a uniform rank $r$ oriented matroid $\mathcal{M}'$ on $n$ elements, see~\cite[Corollary 7.7.9]{BVSWZ99} and in particular $f_{\M}(r,n,k)\le f_{\M'}(r,n,k)$ (see \cite[Theorem 3.16]{HKM24}). Thirdly, it is well-known that there is exactly one reorientation class of uniform rank $r$ oriented matroids on $n\le r+2$ elements (see \cite[Remark 1.3]{HKM24} and \cite[Section 6.1]{BVSWZ99}).  Thus, we obtain the following observation.
\begin{remark}\label{remarkk-Roudff}
To answer Conjecture \ref{conjkRoudneff}, it suffices to verify it for the reorientation classes $[\M]$ of uniform oriented matroids $\M$. Moreover, Conjecture \ref{conjkRoudneff} holds for every $n\le r+2$.
\end{remark}

%So, notice that Conjecture \ref{conjkRoudneff} holds for $k=0$ since every uniform oriented matroid of rank $r$ on $n$ of elements have the same number of acyclic reorientations \cite{Cor80} and it holds trivially for $n\le r+2$  by  Remark \ref{remarkk-Roudff}. 
For $k=1$ \Cref{conjkRoudneff} combines Roudneff's conjecture and~\cite[Question 2]{MR15}. Hence, for $k=1$ the answer is positive  if $r\leq 5$ and also holds for Lawrence oriented matroids. Conjecture \ref{conjkRoudneff} was also proved in \cite{HKM24} for $k=2$ and $r=6$, and also for every odd rank $r=2k+1$ \cite{HKM24}.

\smallskip

One might think that as in the Upper Bound Theorem~\cite{Mac} the reorientation class of $\mathcal{C}_r(n)$ is unique in attaining the maximum in \Cref{conjkRoudneff}. However, there are different reorientation classes that attain $c_r(n,k)$, for $r=5,n=8,9$ and $k=2$ and also for $r=7,n=10$ and $k=3$  \cite[Theorem 4.1]{HKM24}. Nevertheless,  for $k=1$ it was conjectured in \cite{HOKMS23}  that the reorientation class of ${C}_r(n)$ is the unique class with $c_r(n,k)$ $k$-neighborly reorientations.

%Furthermore, in~\cite{BBLP95} it is shown that for realizable oriented matroids of rank $r$ on $n$ elements, the number of $1$-neighborly reorientations is $2\binom{n}{r-3}+O(n^{r-4})$, i.e., Roudneff's conjecture holds asymptotically in the realizable setting.

%The above conjecture is stated for $r\ge 3$ since for $r=1,2$ there is only one reorientation class and clearly $m(\M,1)= c_r(n,1)$ (see \cite[Section 6.1]{BVSWZ99}).

\smallskip

In this work, we first obtain in Section \ref{LOM_k-Roudf}  some progress in  $k$-Roudneff's conjecture for the family of Lawrence oriented matroids (LOMs) and also provide a general upper bound of $f_{\M}(r,n,k)$ when $\M$ is a LOM. As we will see later, the class of  LOMs contains as a very particular case the alternating oriented matroid and thus a natural class to investigate the validity of \Cref{conjkRoudneff}. %when $r=2k+2$ and $n\geq 3r-4$ (Theorem \ref{rparkmax}) and  we also provide an upper bound for $f_{\M}(r,n,k)$ for every $r\ge 3$ and $k\ge 1$ (Theorem \ref{theo-cota-sup}) when $\M$ is a LOM. Furthermore, with the help of the computer we prove \Cref{conjkRoudneff} for LOMs with $r=7$, $k=2$ and $n\ge 10$, for $r=8$, $k=2$ and $n=11,12$, and also for $r=9$, $k=3$ and $n\ge 12$ (see Corollary \ref{2-roudneff}, Theorem \ref{thm_computer8} and Corollary \ref{3-roudneff}, respectively). 
 Afterwards,  
 we prove in Section \ref{asymptotics}  that $k$-Roudneff's conjecture holds asymptotically  when $n\rightarrow \infty$.

\subsection{Organization of the paper}
The structure of the paper is as follows.
\smallskip
\begin{itemize}
    \item In Section \ref{intro} we explain some basic notions of oriented matroid theory, introduce the alternating oriented matroid and explain the $k$-Roudneff conjecture (Conjecture \ref{conjkRoudneff}).  

    \item In Section \ref{LOM_k-Roudf} we obtain some results on Conjecture \ref{conjkRoudneff} for LOMs. More precisely:

    \begin{itemize}
        \item In Subsection \ref{k-neighborly-LOMs}  we first introduce the LOMs, present some basic results and obtain an equivalent description of $k$-neighborly LOMs in terms of its acyclic reorientations ($k$-neighborly plain travels).
       
        \item  In Subsection \ref{even_r_max_k_Roudff} we prove  Conjecture \ref{conjkRoudneff} for LOMs with even rank $r=2k+2$ %\ge 4$ 
 and $n\geq 3r-4\ge 8$ (Theorem \ref{rparkmax}). 

        \item In Subsection \ref{newbounds_alternating}, we provide a general upper bound of $f_{\M}(r,n,k)$ when $\M$ is a LOM %and as a consequence we obtain an upper bound for $c_r(n,k)$ 
 (Theorem \ref{theo-cota-sup}). 

        \item In Subsection \ref{Chess-subsection} we introduce the Chessboard of a LOM in order to calculate the number of reorientation classes of LOMs (this allows us to calculate $f_{\M}(r,n,k)$ for low ranks with a computer program). 

        \item In Subsection \ref{computer r=7_k=2} we first explain a computer program that computes $f_{\M}(r,n,k)$ when $\M$ is a LOM. Using our computer program  we prove Conjecture \ref{conjkRoudneff} for LOMs for $r=7$, $k=2$ and $n=11$ (Theorem \ref{thm_computer7}), for $r=8$, $k=2,3$ and $n=11,12$ (Theorem \ref{thm_computer8}), for $r=9$, $k=2,3$ and $n=12$ and also for $r=9$, $k=3$ and $n=13$ (Theorem \ref{thm_computer9}). Then, using Proposition \ref{coro-LOMs} and Theorems \ref{thm_computer7} and \ref{thm_computer9}  we prove  Conjecture \ref{conjkRoudneff} for LOMs for the cases $r=7$, $k=2$ and $n\ge 11$ (Corollary \ref{2-roudneff}) and also for $r=9$, $k=3$ and $n\ge 12$ (Corollary \ref{3-roudneff}). 
    \end{itemize}

\item  In Section \ref{asymptotics} we prove that for fixed $r\ge 4$ and $k\ge 1$ the number of $k$-neighborly reorientations of any rank $r$ oriented matroid on $n$ elements is at most
$2\binom{n}{r-2k-1}+O(2n^{r-2k-2})$  as $n\rightarrow \infty$ (Corollary \ref{asintotico_k-Roudf}), proving $k$-Roudneff's conjecture asymptotically  and thus giving more credit to \Cref{conjkRoudneff}.
\end{itemize}

\medskip

\section{$k$-neighborly reorientations of LOMs}\label{LOM_k-Roudf}
%\section{New results on $k$-Roudenff's conjecture for LOMs}\label{LOM_k-Roudf}

In this section we study the parameter $f_{\M}(r,n,k)$ when $\M$  is a Lawrence oriented matroid (LOM). We obtain obtain some results on $k$-Roudneff's conjecture for LOMs.
%In subsection \ref{k-neighborly-LOMs} we present an equivalent description of $k$-neighborly LOMs in terms of its acyclic reorientations ($k$-neighborly plain travels). Then, in subsection \ref{even_r_max_k_Roudff} we prove Conjecture \ref{conjkRoudneff} for LOMs when $r=2k+2\ge 4$ and $n\geq 3r-4$. In subsection \ref{newbounds_alternating} we give a general upper bound for $f_\mathcal{M}(r,n,k)$ where $\M$ is a LOM. Later, in subsection \ref{Chess-subsection} we compute the number of reorientation classes of rank $r$ LOMs on $n$ elements. Finally in subsection \ref{computer r=7_k=2}, with the help of a computer program we prove \Cref{conjkRoudneff} for LOMs when $(r,k)=(7,2)$ and $n\ge10$ and $(r,k)=(9,3)$ and $n\ge12$.
%$(r,k)\in\{(7,2),(8,2), (9,3)\}$ and $n\geq 12$.
\smallskip
%\subsection{The chirotope of an oriented matroid}

Before defining a LOM, we need to introduce the chirotope of an oriented matroid. The definition of the base axioms of an oriented matroid is due to Gutierrez Novoa~\cite{Gut65} who called the structure multiply ordered sets. Lawrence~\cite{L82} proved that this yields another axiom system for oriented matroids. 
The term \emph{chirotope} of an oriented matroid %$\M$ 
is due to Dress~\cite{Dre85}, who reinvented oriented matroids. %since they uniquely determine $\M$.
An oriented matroid of rank $r$ is a pair $\M=(E,\chi)$ of a finite ground set $E$ and a \emph{chirotope} $\chi:E^r\to \{+,0,-\}$ satisfying:
\begin{itemize}
    \item[(B0)] $\chi$ is not identically zero,
    \item[(B1)] $\chi$ is alternating,
 \item[(B2)] if $x_1,\ldots, x_r,y_1, \ldots, y_r\in E$ and $\chi(y_i,x_2,\ldots, x_r)\chi(y_1, \ldots, y_{i-1},x_1,y_{i+1},\ldots, y_r)\geq 0$ for all $i\in[r]$, then $\chi(x_1,\ldots, x_r)\chi(y_1, \ldots, y_r)\geq 0$.
\end{itemize}

If $\chi: E^r \rightarrow \{-,+\}$ is a chirotope, then $\M=(E,\chi)$ is a uniform oriented matroid.
Moreover, if $E=\{1,\ldots,n\}$ and $\chi(B)=+$ for any ordered tuple $B = (b_1<\ldots <b_r)$, then $\M=(E,\chi)$ is  ${C}_r(n)$, the alternating oriented matroid of rank $r$ on $n$ elements. It is known that every oriented matroid $\M$ has exactly
two basis orientations and these two basis orientations are opposite, $\chi$ and $-\chi$ (see \cite[Proposition 3.5.2]{BVSWZ99}).

\subsection{$k$-neighborly Lawrence oriented matroid (LOM)}\label{k-neighborly-LOMs}

A {\em Lawrence oriented matroid} (LOM) of rank $r$ on the totally ordered set $E=\{1,\ldots,n\}$, $r\le n$, is an uniform oriented matroid obtained as the union of $r$ uniform oriented matroids ${\M}_1,\ldots,{\M}_r$ of rank $1$ on $(E,<)$ (see \cite{lawrence1981unions,RS88}). They were studied and used in \cite{R01} to find examples to provide upper bounds to the McMullen problem, stated in \cite{L72}, 
%In \cite{MR15} were  used to prove Roudneff's conjecture (Conjecture \ref{conjRoudff}) in this family of oriented matroids. 
and then in a generalization of this problem to $k$-neighborly polytopes \cite{GL15}. Note that yet another variant of McMullen’s problem has been studied recently in which LOMs are also used \cite{garcia2023number}. 
In \cite{MR15} they were also used to prove
Roudneff's conjecture (Conjecture \ref{conjRoudff}) for LOMs.

\smallskip

LOMs can also be defined via the signature of their bases, that is, via their chirotope $\chi$. Indeed, the chirotope $\chi$ corresponds to some LOM, ${\M}_A$, if and only if there exists a matrix $A=(a_{i,j})$, $1\le i\le r$, $1\le j\le n$ with entries from $\{+1,-1\}$ (where
the $i$-th row corresponds to the chirotope of the oriented matroid $\mathcal{M}_i$) such that
\begin{equation}\label{bases-lom}
\chi(B)=\prod_{i=1}^{r} a_{i,j_i}
\end{equation}
%\begin{equation*}
%\chi(B)=\prod_{i=1}^{r} a_{i,j_i}
%\end{equation*}
where $B$ is a base of ${\M}_A$, i.e.,  an ordered $r$-tuple, $j_1<\cdots<j_r$, of elements of $E$. From now on we will consider matrices $A$ with entries from $\{+1,-1\}$. 

\smallskip

First, notice that the alternating oriented matroid is a LOM,  ${C}_r(n)=\M_A$, where $A$ is the matrix with all its entries of the same sign (in fact, we point out in Subsection \ref{Chess-subsection} that $A$ is not unique).
Second, acyclic {LOMs} are realizable since they are unions of realizable oriented matroids  \cite[Proposition 8.2.7]{BVSWZ99}.
Thirdly, as well as the signs of the bases of a LOM $\M_A$ can be obtained directly from matrix $A$ (Equation (\ref{bases-lom})), it is also possible to obtain from $A$ the signs of the elements of any circuit of $\M_A$ in the following way. It is well-known that the chirotope of a uniform oriented matroid relates the basis and the circuits as follows: $\chi(B)=-X_{j_i}\cdot X_{j_{i+1}}\cdot \chi(B'),$
where $\underline{X}=\{j_1,...,j_{r+1}\}$, $B=\underline{X}\setminus j_i$ and $B'=\underline{X}\setminus j_{i+1}$
see \cite[Section 3.5]{BVSWZ99}. Thus, it can be seen that the signs of any circuit $\underline{X}=\{j_1,...,j_{r+1}\}$ in ${\M}_A$ can be obtained from matrix $A$ by the equation 
\begin{equation}\label{eq-circuits}
 X_{j_{i+1}}=-X_{j_i}a_{i,j_i}a_{i,j_{i+1}}   
\end{equation}
see \cite[Lemma 2.1]{R01}. We also note the following.
\begin{remark}\label{prop-lom}
    Let $A = (a_{i,j})$, $1\le i \leq r$, $1\le j \leq n$, be a matrix and ${\M}_A$  its corresponding Lawrence oriented matroid.
    \begin{itemize}
        \item[(i)] The coefficients $a_{i,j}$ with $i>j$ or $j-n > i-r$ do not play any role in the definition of ${\M}_A$ (since they never appear in (\ref{bases-lom})).% So, we may give them any arbitrary value from $\{+1,-1\}$ or ignore them completely.
        \item[(ii)] The opposite chirotope $-\chi$ is obtained by reversing the sign of all the coefficients of a line of $A$.
        \item[(iii)] The oriented matroid $_{-R}{\M}_A$ is obtained by reversing the sign of all the coefficients of a set of columns $R$ in $A$. 
    \end{itemize}
\end{remark}

The Top and Bottom  travels of a matrix $A$ were defined and studied in \cite{R01} to detect when $\M_A$ is $1$-neighborly. Next, we will use them to determine
when $\M_A$ is $k$-neighborly (\Cref{LOM_k_neigh}). %reorientations of $\M_A$.

\smallskip

The \emph{Top Travel} $TT$ of $A$ is a subset of the entries of $A$,
$$\{ [a_{1, 1}, a_{1, 2},\dots ,a_{1, j_{1}}] , [a_{2, j_{1}}, a_{2, j_{1}+1},\dots , a_{2, j_{2}}] , \dots , [a_{s, j_{s-1}}, a_{s, j_{s-1}+1},\dots , a_{s, j_{s}}] \},$$
with the following characteristics.

\begin{enumerate}
\item $a_{i, j_{i-1}} \times a_{i, j}= 1, \text{ for all } \  j_{i-1} \leq j < j_{i}$, $1\le i\le s$, where we define $j_0=1$;

\item ${a_{i, j_{i}-1}} \times a_{i, j_{i}}= -1,   \text{ for all } \  1\le i\le s-1$  and either

\begin{enumerate}
\item $j_{s-1}=n$; then $[a_{s, j_{s-1}}, a_{s, j_{s-1}+1},\dots , a_{s, j_{s}}]=[a_{s, j_{s}}]$ or

\item $j_{s-1}<n$  and $1\leq s < r$;   then  $j_{s} = n$  and $a_{s, j_{s}-1} \times a_{s, j_s}= 1$ or

\item  $j_{s-1}<n$, $s=r$ and $ j_{s} < n$; then $a_{s, j_{s}-1} \times a_{s, j_s}= -1$ or

\item  $j_{s-1}<n$, $s=r$ and $ j_{s} =n$; then $a_{s, j_{s}-1} \times a_{s, j_s}=1$ or $-1$.
\end{enumerate}
\end{enumerate}

%Figure \ref{ejemplo_situationsTT}? shows some examples of matrices with their corresponding $TT$. For instance, the $TT$ of example $(c)$ have $3=j_{s-1}<n=6$, $s=r=3$ and $4=j_{s} < n=6$, then we notice that it happen the situation (2)(c) of the definition of the Top Travel. 
\medskip

The \emph{Bottom Travel} $BT$ of $A$ is a subset of the entries of $A$,
$$\{ [a_{r, n}, a_{r, n-1},\dots ,a_{r, j_{r}}] , [a_{r-1, j_{r}}, a_{r-1, j_{r}-1},\dots , a_{r-1, j_{r-1}}] , \dots , [a_{s, j_{s+1}}, a_{s, j_{s+1}-1},\dots , a_{s, j_{s}}] \},$$
%{\textcolor{blue}{where $[a_{i, j_{i+1}},\dots ,a_i, j_{i}]$ are the entries in line $i$,  $s\le i\le r$ and $j_{r+1}=n$,}}
 with the following characteristics.
%\begin{enumerate}
%\item $ a_{i, j_{i+1}} \times a_{i, j}= 1, \quad \forall \quad\  j_{i} < j \leq j_{i+1};$
%\item $ a_{i, j_{i+1}} \times a_{i, j_{i}}= -1;$  and
%\item either
%\begin{enumerate}
%\item $1 < s \leq r$;   then  $j_{s} = 1$ or
%\item $s=1$ and $ 1 \leq j_{s}.$
%\end{enumerate}
%\end{enumerate}

\begin{enumerate}
\item $ a_{i, j_{i+1}} \times a_{i, j}= 1, \text{ for all }\  j_{i} < j \leq j_{i+1}$, $s\le i\le r$, where we define $j_{r+1}=n$;
\item ${a_{i, j_{i}+1}} \times a_{i, j_{i}}= -1, \text{for all } \  s-1\le i\le r $ and either
%\item either
\begin{enumerate}
\item $j_{s+1}=1$;   then  $[a_{s, j_{s+1}}, a_{s, j_{s+1}-1},\dots , a_{s, j_{s}}]=[a_{s, j_{s}}]$ or
\item $j_{s+1}>1$ and $1 < s \leq r$; then  $j_{s} = 1$ and  $a_{s, j_{s}+1} \times a_{s, j_s}= 1$ or
\item $j_{s+1}>1$, $s=1$ and $1<j_{s}$;  then  $a_{s, j_{s}+1} \times a_{s, j_s}=-1$ or
\item $j_{s+1}>1$, $s=1$ and $1=j_{s};$ then $a_{s, j_{s}+1} \times a_{s, j_s}=1$ or $-1$.
\end{enumerate}
\end{enumerate}
\medskip

In other words, $TT$ (resp. $BT$) may be thought of as a travel starting at $a_{1,1}$ (resp. at $a_{r,n}$) making horizontal movements to the right (resp. to the left) and vertical movements to the bottom (resp. to the top) of $A$ according with the above constructions.
Notice that any matrix $A$ admits exactly one $TT$ and one $BT$. Figures \ref{ejemplo} and \ref{ejemplo2} show examples of matrices with its Top and Bottom travels.

\smallskip

We say that the $TT$ [resp. $BT$] of $A$ is \emph{positive} if $a_{r, i-1},a_{r, i}\in TT$ [$a_{1, i},a_{1, i+1}\in BT$] and  $a_{r, i-1} \times a_{r, i}=-1$ [$a_{1, i} \times a_{1, i+1}=-1$] for some $i\le n$ [$i\ge 1$] (see matrix $A_S$ of Figure \ref{ejemplo2}). Notice that $TT$ is positive if and only if $BT$ is positive. Moreover, using Equation (\ref{eq-circuits}) we may deduce the following.

%\begin{remark}\label{rem-positive}
%    Let $\M_A$ be a LOM. Then its top travel %$TT$ is positive if and only if $\M_A$ has a %positive circuit (i.e., $\M_A$ is not acyclic).
%\end{remark}

\begin{remark}\label{rem-positive}
    A LOM $\mathcal{M}_A$ is not acyclic (i.e., has a positive circuit) if and only if $A$ has a positive top travel.
\end{remark}

Given a set $S$ of columns, we denote by $A_S=(a_{i,j}^S)$
the matrix obtained from reorienting the column of $S$ and denote by $TT_S$ (resp. $BT_S$) the top travel (bottom travel) of $A_S$.
Figure \ref{ejemplo2} shows a matrix $A_S$ with its positive Top and Bottom travels. Notice that the oriented matroid $\M_{A_S}$ is not acyclic since, for example, the circuit $(++++++0)$ is positive.  
\smallskip

We say that the matrix $A$  is \emph{$k$-neighborly} if for any set $S$ of at most $k$ columns,  $TT_S$ is not positive. So, we have the following result.

\begin{lemma}\label{LOM_k_neigh}
  A matrix $A$  is $k$-neighborly if and only if $\M_A$ is $k$-neighborly.
\end{lemma}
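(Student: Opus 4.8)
The plan is to prove the lemma by composing the definition of $k$-neighborliness with the two preceding remarks, turning the statement into a short chain of equivalences. First I would unwind the definition on the oriented matroid side: by the definition given in Section \ref{intro}, $\M_A$ is $k$-neighborly if and only if for every subset $S\subseteq E$ with $|S|\le k$ the reorientation ${}_{-S}\M_A$ is acyclic. Since the columns of $A$ are indexed by the ground set $E=\{1,\dots,n\}$, a subset $S\subseteq E$ of size at most $k$ is exactly a set of at most $k$ columns of $A$, and this identification will be used throughout.

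Next I would translate the reorientation into an operation on the matrix. By Remark \ref{prop-lom}(iii), reorienting the set $S$ of elements amounts to reversing the signs of all coefficients in the columns indexed by $S$; that is, ${}_{-S}\M_A=\M_{A_S}$, where $A_S$ is the matrix defined immediately before the lemma. Consequently, $\M_A$ is $k$-neighborly if and only if $\M_{A_S}$ is acyclic for every set $S$ of at most $k$ columns.

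Finally I would apply the acyclicity criterion of Remark \ref{rem-positive} to the matrix $A_S$: the LOM $\M_{A_S}$ is acyclic if and only if $A_S$ has no positive top travel, i.e., if and only if $TT_S$ is not positive. Stringing the three equivalences together gives that $\M_A$ is $k$-neighborly precisely when $TT_S$ is not positive for every set $S$ of at most $k$ columns, which is exactly the definition of $A$ being $k$-neighborly, completing the proof.

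I expect this argument to be purely a matter of correctly composing definitions with Remarks \ref{prop-lom} and \ref{rem-positive}, so there is no substantive obstacle. The only point that needs a little care is the bookkeeping identification between a subset $S\subseteq E$ of the ground set and the corresponding set of columns of $A$, together with noting that Remark \ref{prop-lom}(iii) is stated for an arbitrary reorientation set, so that it applies verbatim to every $S$ with $|S|\le k$ (rather than only to a single fixed set). Once this is observed, each equivalence is immediate.
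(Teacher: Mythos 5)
Your proof is correct and follows essentially the same route as the paper's: both arguments compose the definition of $k$-neighborliness with Remark \ref{prop-lom}(iii) (column reorientations are element reorientations) and Remark \ref{rem-positive} (acyclic iff no positive top travel). Your version is, if anything, slightly more careful about the universal quantifier over all sets $S$ of size at most $k$.
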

\begin{proof}
Reorientations of columns in $A$ are just reorientations of elements in $\M_A$ (Remark \ref{prop-lom} (iii)).  Let $S$ be any set of at most $k$ columns, then by Remark \ref{rem-positive} $TT_S$ is not positive in $A_S$ if and only if  $_{-S}\M_A$ is acyclic. Finally, by definition $_{-S}\M_A$ is acyclic if and only if $\M_A$ is $k$-neighborly.
\end{proof}

Figure \ref{ejemplo2} shows a $1$-neighborly matrix $A$. Then $\M_A$ is a $1$-neighborly oriented matroid by the above observation. Nevertheless,  $A$ (and thus $\M_A$) is  not $2$-neighborly since $\M_{A_S}$ is acyclic for $S=\{2,3\}$.

\smallskip

%All possible reorientations of  $\M_A$ can be identified with yet another simple object.
The $k$-neighborly reorientations  of $\M_{A}$ can be identified with yet another simple object.
A \emph{Plain Travel} $P$ of  $A$ is a subset of the entries of $A$ of the form
$$P=\{ [a_{1, 1}, a_{1, 2},\dots ,a_{1, j_{1}}] , [a_{2, j_{1}}, a_{2, j_{1}+1},\dots , a_{2, j_{2}}] , \dots , [a_{s, j_{s-1}}, a_{s, j_{s-1}+1},\dots , a_{s, j_{s}}] \} $$  with $ 2\leq j_{i-1}  {<} j_{i}  \leq n$ for all $1 \leq i \leq r ,\;\; 1< s \leq r$   and  $j_{s} = n$.
In other words, $P$ starts with $a_{1,1},a_{1,2}$, it cannot make two consecutive vertical movements and ends at column $n$ in some row of $A$ (see Figures \ref{ejemplo} and \ref{ejemplo2}).

\smallskip

%Given a plain travel $P$ of $A$, let denote by $\mathcal{A}=A_S$ the new matrix obtaining by reorienting a set $S$  of column indices of $A$  such that the top travel $TT_S$ of $\mathcal{A}$ follows the same path as $P$. If there is no confusion, we will denote $TT_S$ simply as $P$.

Given a plain travel $P$ of $A$, let denote by $\mathcal{A}$ the new matrix obtaining by reorienting a set of column of $A$  such that its top travel follows the same path as $P$. If there is no confusion, we will denote the top travel of $\mathcal{A}$ simply as $P$. %Hay un pequeño detalle de formalismo que esperemos no sea necesario arreglar cuando nos revisen los referis: Un plain travel P es un conjunto de entradas con signos de la matriz A con ciertas propiedades. Ahora bien, cuando definimos más adelante \mathcal{A} (o A^P) entonces también P cambia. Es decir, no cambia en su recorrido pero si en sus signos. Nosotros estamos siempre utilizando P y tal vez deberíamos llamarle \mathcal{P} pero se vuelve muy pesado de entender... O BIEN CAMBIAR DEF DE PLAIN TRAVEL: SOO EL IMPORTA EL RECORRIDO (O PONER GARIGOLEADA LA P) 
It turns out that the set of all plain travels of $A$ are in one-to-one correspondence with the set of acyclic reorientations (i.e., $0$-neighborly reorientations) of $\M_{A}$.

\begin{lemma}\cite[Lemma 3.1]{R01}\label{PT_and_reor}
There is a bijection between the set of all plain travels of the matrix $A$ and the set of all acyclic reorientations  of $\M_{A}$.
\end{lemma}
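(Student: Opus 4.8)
The plan is to exhibit the two maps explicitly and check that they invert one another, using throughout the principle that the top travel is a \emph{deterministic} staircase: starting at $a_{1,1}$ it moves right as long as the running product $a^R_{i,j}\cdot a^R_{i,j+1}$ of horizontally adjacent entries equals $+1$, and it drops one row exactly when this product first equals $-1$. Hence the \emph{shape} of $TT(A_R)$ is governed entirely by the signs of the consecutive-column products met along the path. The first thing I would record is how reorientation acts on these products: writing $\epsilon_j=+1$ if $j\notin R$ and $\epsilon_j=-1$ if $j\in R$, one has $a^R_{i,j}a^R_{i,j+1}=\epsilon_j\epsilon_{j+1}\,a_{i,j}a_{i,j+1}$, so reorienting columns perturbs every product in the column-pair $(j,j+1)$, in all rows simultaneously, by the single factor $\epsilon_j\epsilon_{j+1}$.

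For the forward map, given a plain travel $P$ I read off from its shape, for each $j\in\{1,\dots,n-1\}$, a target product $\sigma_j\in\{\pm1\}$: I set $\sigma_j=-1$ when the rightward step from column $j$ to $j+1$ of $P$ is the step that triggers a drop, and $\sigma_j=+1$ otherwise (note $P$ visits every column, so every pair $(j,j+1)$ is traversed exactly once, in the unique row over which $P$ lies). I then solve $\epsilon_j\epsilon_{j+1}=\sigma_j\, a_{\cdot,j}a_{\cdot,j+1}$ for $j=1,\dots,n-1$, where $a_{\cdot,j}a_{\cdot,j+1}$ is evaluated in the row of $P$ over that pair; this $2$-coloring system has exactly two solutions, interchanged by flipping all $\epsilon_j$, i.e.\ by replacing $R$ with $E\setminus R$. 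Normalizing $\epsilon_1=+1$ (equivalently $1\notin R$) selects one, and by construction the deterministic top travel of $A_R$ follows $P$ exactly. Since $P$ ends at column $n$ with its final horizontal run flat (the last pair having product $+1$), $TT_R$ is not positive, so by \Cref{rem-positive} (equivalently \Cref{LOM_k_neigh} with $k=0$) the reorientation $_{-R}\M_A$ is acyclic.

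For the inverse map I take an acyclic reorientation $R$ and claim $TT(A_R)$ is a plain travel. It begins with $a_{1,1},a_{1,2}$, as every top travel does. It never performs two consecutive vertical moves, because a drop is triggered only after a detected sign change, which costs at least one rightward step; thus each row-segment of the top travel contains at least one horizontal move. Finally it must terminate at column $n$: the only terminal configuration of the top-travel rules that stops short of column $n$ is case (2c) of the definition, which is exactly a positive top travel and is excluded by acyclicity via \Cref{rem-positive}. Composing the two assignments recovers the identity once the normalization $1\notin R$ is fixed, which gives the bijection.

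The step I expect to be the real work is the bookkeeping that matches the four terminal cases (2a)--(2d) of the top-travel definition with (non)positivity, together with making the global-reorientation ambiguity explicit. Indeed $A_R$ and $A_{E\setminus R}$ have identical top travels, so on \emph{subsets} the correspondence is genuinely two-to-one; it becomes a bijection only after identifying $R$ with $E\setminus R$ (equivalently after the normalization $\epsilon_1=+1$). Verifying that the forward and inverse constructions are mutually inverse then reduces to the observation that both the product-signs $\sigma_j$ extracted from a path and the drop-pattern reconstructed by the greedy rule encode the same data, namely the columns at which $P$ changes row.
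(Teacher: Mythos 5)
Your construction is correct and is essentially the paper's own: the paper defers the proof to \cite[Lemma 3.1]{R01} and describes the bijection in exactly the way you implement it, namely by associating to each plain travel $P$ the set of columns whose reorientation turns the top travel of $A$ into $P$, with existence and uniqueness of that set (up to complementation) read off from the chain of consecutive-column sign products. Your explicit handling of the $R$ versus $E\setminus R$ ambiguity and of the terminal cases (2a)--(2d) via \Cref{rem-positive} matches how the paper resolves the same points in \Cref{remark-2}.
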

 %For a proof of this remark see \cite{RAMIREZALFONSIN2001}.
The bijection stated above is defined by associating to each plain travel $P$ the set of columns of $A$ that should be reoriented in order to transform $A$ into $\mathcal A$  (see Figure \ref{ejemplo}). 

\begin{figure}[htb]
\begin{center}
 \includegraphics[width=1\textwidth]{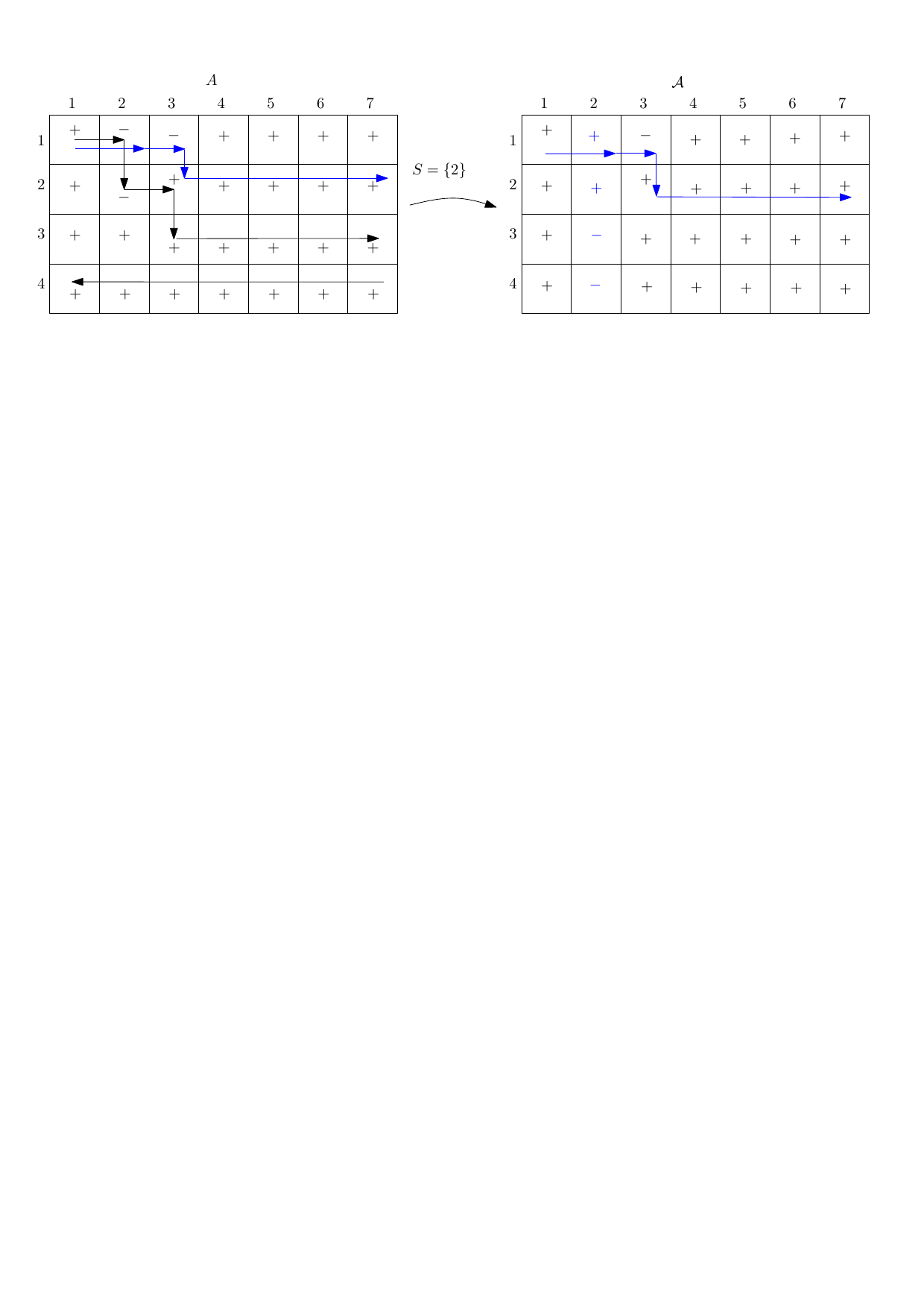}
\caption{In black, the top and bottom travels of the matrix $A$ and in blue, a plain travel $P$ of $A$. The matrix $\mathcal{A}$ is obtaining by reorient column $2$ of $A$. Notice that $P$ is the top travel of $\mathcal{A}$.} \label{ejemplo}
\end{center}
\end{figure}
 We say that a plain travel $P$ of a matrix $A$ is \emph{$k$-neighborly}  if  $\mathcal A$ is a $k$-neighborly matrix. The following lemma  will be useful in this section. 
\begin{lemma}\label{PT_and_reor_k}
    A plain travel $P$ of matrix $A$  is $k$-neighborly if and only if its corresponding acyclic reorientation of  $\M_{A}$ is $k$-neighborly.
\end{lemma}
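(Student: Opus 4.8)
The plan is to chain together the bijection of Lemma~\ref{PT_and_reor} with Lemma~\ref{LOM_k_neigh}, so that the proof reduces to unwinding the relevant definitions and matching up the two reorientation operations (reorienting columns of a matrix versus reorienting elements of an oriented matroid).

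First I would fix a plain travel $P$ of $A$ and invoke the bijection of Lemma~\ref{PT_and_reor}. By construction, $P$ determines a set $R$ of columns of $A$ whose reorientation produces the matrix $\mathcal{A}=A_R$, characterized by the property that its top travel traces the same path as $P$; moreover, the acyclic reorientation of $\M_A$ associated with $P$ is exactly the one obtained by reorienting this same set $R$. By Remark~\ref{prop-lom}(iii), reorienting the columns $R$ of $A$ corresponds at the level of oriented matroids to the reorientation ${}_{-R}\M_A$, and this oriented matroid is precisely $\M_{\mathcal{A}}=\M_{A_R}$. Hence the acyclic reorientation of $\M_A$ corresponding to $P$ is $\M_{\mathcal{A}}$.

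With this identification in place, the equivalence follows at once. By the definition of a $k$-neighborly plain travel, $P$ is $k$-neighborly if and only if the matrix $\mathcal{A}$ is $k$-neighborly. Applying Lemma~\ref{LOM_k_neigh} to the matrix $\mathcal{A}$, the matrix $\mathcal{A}$ is $k$-neighborly if and only if $\M_{\mathcal{A}}$ is $k$-neighborly. Combining these two equivalences with the identification $\M_{\mathcal{A}}={}_{-R}\M_A$ from the previous step yields that $P$ is $k$-neighborly if and only if its corresponding acyclic reorientation $\M_{\mathcal{A}}$ of $\M_A$ is $k$-neighborly, which is exactly the claim.

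Since each step is a direct appeal to an already-established result or to a definition, I do not expect a serious obstacle. The only point requiring care is the bookkeeping that the matrix $\mathcal{A}$ attached to $P$ by the bijection of Lemma~\ref{PT_and_reor} really is the column-reorientation $A_R$, so that Lemma~\ref{LOM_k_neigh} may be invoked for $\mathcal{A}$ and the two notions of reorientation coincide via Remark~\ref{prop-lom}(iii); once this matching is made explicit the argument is purely formal.
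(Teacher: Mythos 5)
Your proposal is correct and follows essentially the same route as the paper's proof: both unwind the definition of a $k$-neighborly plain travel, apply Lemma~\ref{LOM_k_neigh} to the matrix $\mathcal{A}$, and use the bijection of Lemma~\ref{PT_and_reor} together with the identification ${}_{-R}\M_A=\M_{\mathcal{A}}$. Your version is slightly more explicit in citing Remark~\ref{prop-lom}(iii) for that identification, but the argument is the same.
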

\begin{proof}
 $P$  is a $k$-neighborly plain travel of $A$ if and only if $\mathcal{A}$ is a $k$-neighborly matrix if and only  if  $\mathcal{M}_{\mathcal{A}}$ is a $k$-neighborly oriented matroid by Lemma \ref{LOM_k_neigh}. Since plain travels of $A$ corresponds to  acyclic reorientations of  $\M_{A}$ (Lemma \ref{PT_and_reor}), the plain travel $P$ corresponds to an acyclic reorientation $R$ of $\M_{A}$. Finally, notice that $_{-R}\M_{A}=\mathcal{M}_{\mathcal{A}}$ which is $k$-neighborly. Therefore,  $R$ is a $k$-neighborly acyclic reorientation of $\M_{A}$, concluding the proof.
\end{proof}

Figure \ref{ejemplo2} shows in blue a plain travel $P$ of $A$. Observe that $\mathcal{A}$ (the matrix obtaining by reorienting a set of columns of $A$ whose top travel is $P$) is not the matrix $A_S$ of Figure \ref{ejemplo2} (in fact $\mathcal{A}$ can be obtained by reorienting column 2). Now, notice that for each set $S'$ of at most $2$ columns of $\mathcal{A}$, the top travel of $\mathcal{A}_{S'}$ is not positive. Therefore, the plain travel $P$ of $A$ is $2$-neighborly, which, in turn, corresponds to a $2$-neighborly reorientation of $\M_{A}$ by Lemma \ref{PT_and_reor}.

\begin{figure}[htb]
\begin{center}
 \includegraphics[width=1\textwidth]{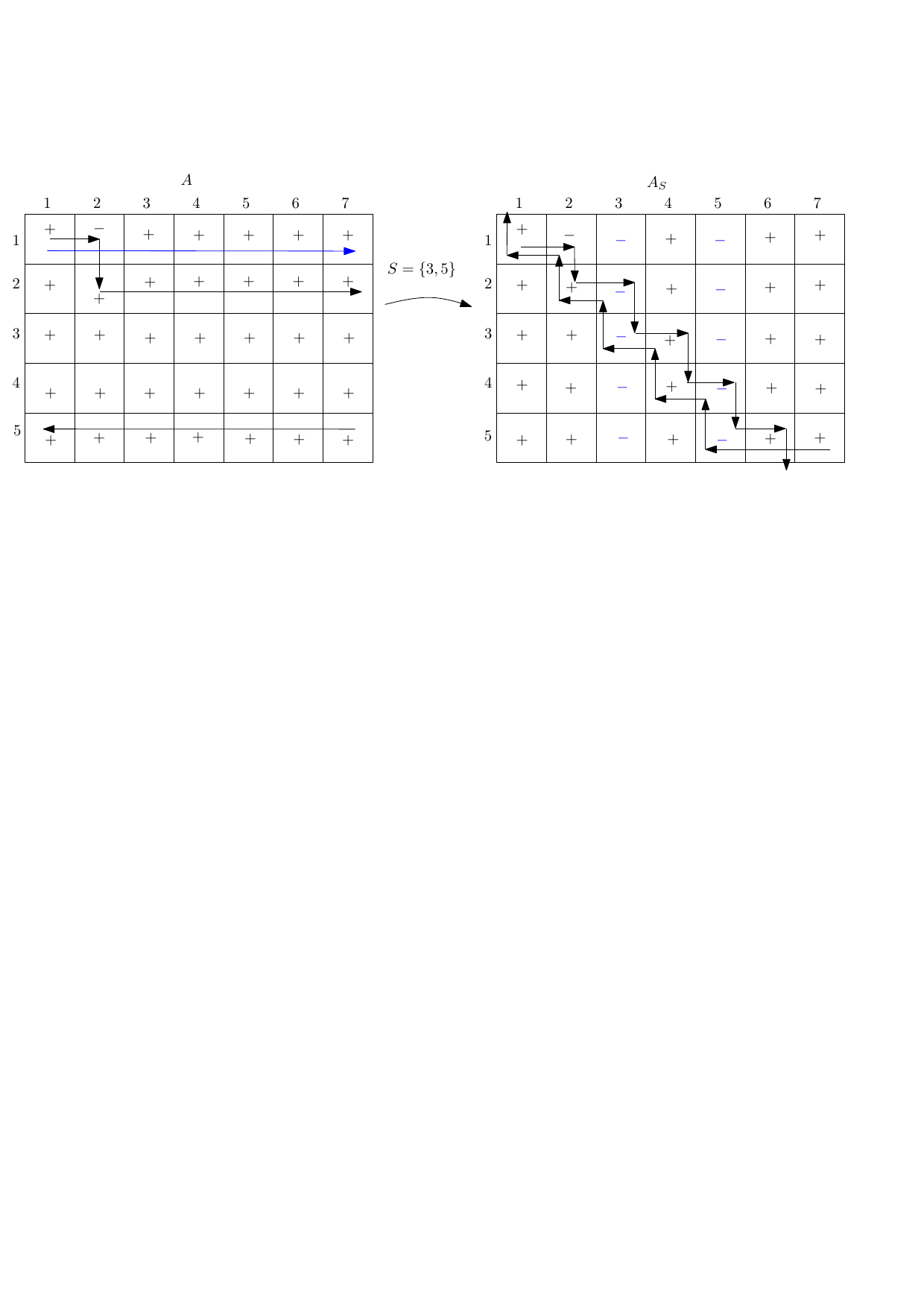}
\caption{In black, the top and bottom travels of the matrices $A$ and $A_S$ and in blue, a plain travel $P$ of $A$. The matrix $A_S$ is obtaining by reorient columns $3$ and $5$ of $A$. The $TT$ and the $BT$ of $A_S$ are positive.} \label{ejemplo2}
\end{center}
\end{figure}

As mentioned at the begining of this section, any oriented matroid has two basis orientations (which are opposite). Thus, we have the following observation which will be used later.

\begin{remark}\label{remark-2}
  Each $k$-neighborly plain travel of a matrix $A$ corresponds to exactly two $k$-neighborly reorientations of $\M_{A}$.
\end{remark}
In fact, these two reorientations are $_R\M_{A}$ and $_{E\setminus R}\M_{A}$ for some $R\subset E$, where $E$ is the ground set of $\M_A$.

\subsection{$k$-Roudneff's conjecture for LOMs when $r=2k+2$}\label{even_r_max_k_Roudff}  %en paper anterior fue para r impar y la k más grande, i.e., r=2k1\\

%Conjecture \ref{conjkRoudneff} is true for $r=2k+1\ge 3$ and $n\ge r+2$(\cite{HKM24}). 
In this subsection we answer Conjecture \ref{conjkRoudneff} in the affirmative for LOMs when $r=2k+2\ge 4$ and $n\geq 3r-4$ (Theorem \ref{rparkmax}). The case $r=2k+1\ge 3$ and $n\ge r+2$ was proved for any oriented matroid in \cite[Corollay 3.17]{HKM24}.

%The case $r=2k+1\ge 3$ was proved for $n\ge r+1$ in \cite[Theorem 3.11 and Proposition 3.12]{HKM24} and will be useful throughout the article. 

%In \cite{HKM24} the authors proved that in order to answer \Cref{conjkRoudneff} affirmatively for a fixed $r$ and $k$, it is enough to prove it for uniform oriented matroids with $r+1\leq n\leq 2(r-k)+1$ and all rank $r'\le r$ uniform oriented matroid $\M'$ on $n'=2(r'-k)+1$ elements.

\smallskip

Let denote by $PT$ the set of plain travels of a matrix $A$ and let $PT^k$ be the set of plain travels of $A$ that are $k$-neighborly. More generally, given any set $\mathcal{P}$ of plain travels of $\mathcal{M}$, we denote by $\mathcal{P}^k$ the subset of plain travels of $\mathcal{P}$ that are $k$-neighborly. We notice that for a LOM $\mathcal{M}$, we have that $f_\mathcal{M} (r,n,k)=2|PT^k|$ by Lemma \ref{PT_and_reor_k} and Remark \ref{remark-2}. Thus, we may rewrite \cite[Corollay 3.17]{HKM24} now in terms of $k$-neighborly plain travels as follows.

\begin{corollary}\label{r_odd_max_ort-lom}
    Let $A$ be a $(r\times n)$-matrix with $r=2k+1\ge 3$ and $n\ge r+2$, then $|PT^k|\le 1$.
\end{corollary}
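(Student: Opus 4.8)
The plan is to deduce this directly from the already-established odd-rank case of $k$-Roudneff's conjecture, together with the bijective dictionary between $k$-neighborly plain travels and $k$-neighborly reorientations set up just above. First I would invoke the identity $f_{\M_A}(r,n,k)=2|PT^k|$, valid for every LOM $\M_A$ by Lemma \ref{PT_and_reor_k} (each $k$-neighborly plain travel of $A$ corresponds to a $k$-neighborly acyclic reorientation of $\M_A$) and Remark \ref{remark-2} (each such plain travel accounts for exactly two reorientations, namely $_{R}\M_A$ and $_{E\setminus R}\M_A$). Thus counting $k$-neighborly plain travels amounts to counting $k$-neighborly reorientations, up to the global factor $2$.

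Next I would apply \cite[Corollary 3.17]{HKM24}, which asserts that $k$-Roudneff's conjecture holds in the extremal odd-rank regime $r=2k+1\ge 3$ for every oriented matroid on $n\ge r+2$ elements; applied to the LOM $\M_A$ this gives $f_{\M_A}(r,n,k)\le c_r(n,k)$. It then only remains to evaluate the right-hand side. For this I would use the closed formula \eqref{formula-ciclico}, whose stated validity range is $n\ge 2(r-k)+1$; substituting $r=2k+1$ turns this threshold into $n\ge r+2$, exactly the hypothesis at hand. Moreover $r-1-2k=0$ when $r=2k+1$, so the sum in \eqref{formula-ciclico} collapses to the single term $\binom{n-1}{0}=1$, giving $c_r(n,k)=2$.

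Combining these steps yields $2|PT^k|=f_{\M_A}(r,n,k)\le c_r(n,k)=2$, and therefore $|PT^k|\le 1$, as claimed. I do not expect any genuine obstacle here: the substantive content is imported wholesale from \cite[Corollary 3.17]{HKM24}, and the only points requiring care are bookkeeping ones, namely checking that the validity range $n\ge 2(r-k)+1$ of \eqref{formula-ciclico} coincides with $n\ge r+2$ precisely at $r=2k+1$, and that the binomial sum retains exactly one nonzero term. A more self-contained alternative would argue directly on plain travels, proving that two distinct $k$-neighborly plain travels cannot coexist when $r=2k+1$; but this would merely re-derive \cite[Corollary 3.17]{HKM24} and is superfluous given the dictionary already in place.
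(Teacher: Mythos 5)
Your proposal is correct and follows exactly the route the paper takes: the paper states this corollary as a direct rewriting of \cite[Corollary 3.17]{HKM24} via the identity $f_{\M_A}(r,n,k)=2|PT^k|$ (from Lemma \ref{PT_and_reor_k} and Remark \ref{remark-2}), combined with $c_r(n,k)=2$ for $r=2k+1$ and $n\ge r+2$. Your bookkeeping of the validity range $n\ge 2(r-k)+1=r+2$ and the collapse of the binomial sum to $\binom{n-1}{0}=1$ is accurate.
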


The next lemmas will be useful to prove Theorem \ref{rparkmax}.

\begin{lemma}\label{n_grande_baja_de_2_en_2}
    Let $A$ be a $(r\times n)$-matrix with $r=2k$ and $n\ge 3k+1\ge 4$ and let $TT$ be its top travel. Then there exists $S\subset\{2,\ldots,n\}$ with $|S|\le k$  such that $TT_S$ is positive in $A_S$.
\end{lemma}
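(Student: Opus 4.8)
The statement asserts that for a matrix $A$ of even rank $r=2k$ with $n\ge 3k+1$ columns, I can always find a small set $S$ of at most $k$ columns (avoiding column $1$) whose reorientation makes the top travel positive, i.e.\ creates a positive circuit. By Remark~\ref{rem-positive}, $TT_S$ positive is equivalent to $\M_{A_S}$ having a positive circuit, so restated the claim is: every LOM of rank $2k$ on at least $3k+1$ elements fails to be $k$-neighborly. This is exactly the lower-bound half of the constraint $r\ge 2k+1$ needed for $k$-neighborliness, now phrased combinatorially on the matrix, and the plan is to construct $S$ explicitly by walking along $TT$ and $BT$ and counting the vertical sign-changes they require.

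\textbf{Key steps.}
First I would recall the structure of the top and bottom travels: $TT$ descends from row $1$ to some terminal row, and $BT$ ascends from row $r$, each making a vertical movement precisely at a sign-change $a_{i,j-1}\times a_{i,j}=-1$. The guiding idea is that reorienting a single column $j$ flips every entry in that column, which can be used to \emph{create} a sign-change (hence a vertical step, pushing the travel downward faster) at the cost of spending one element of $S$. Since $TT$ must traverse $r=2k$ rows but is only allowed $n-1$ horizontal positions in columns $2,\dots,n$, a counting/pigeonhole argument shows that when $n\ge 3k+1$ there is enough horizontal room to force the combined top-and-bottom travels to overlap, and that the number of column-reorientations needed to bridge them is at most $k$. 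Concretely, I would show the top travel and bottom travel must meet (the top travel cannot reach column $n$ in few enough rows while staying non-positive unless it collides with the ascending bottom travel), and then take $S$ to be the set of columns where a forced vertical step is introduced by a single flip; bounding $|S|\le k$ follows because each flip advances the travel by (at least) two rows toward the meeting point, and the total row-gap to be closed is at most $2k$. Finally I would verify $S\subseteq\{2,\dots,n\}$ by checking column $1$ is never needed (the travels are anchored at $a_{1,1}$ and $a_{r,n}$, so the first reorientation can always be taken at column $2$ or later, consistent with Remark~\ref{prop-lom}(i) which makes column-$1$ reorientations cosmetic).

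\textbf{Main obstacle.}
The hard part will be the exact bookkeeping that guarantees $|S|\le k$ rather than merely $|S|\le r=2k$: I must argue that each reorientation I add closes the row-gap between the top and bottom travels by two, not one, so that $k$ flips suffice to force a positive travel. This requires a careful case analysis of the terminal conditions (2a)--(2d) of $TT$ and (2a)--(2d) of $BT$, since the parity and the boundary behavior at column $n$ determine whether the final step creates a positive travel ``for free'' or consumes one more column. I expect to handle this by induction on $k$ (or on the number of rows), reducing from a rank-$2k$ matrix to a rank-$2(k-1)$ matrix after deleting the two rows and the few columns consumed by the first flip, and invoking the inductive hypothesis on the residual matrix, whose column count $n-O(1)\ge 3(k-1)+1$ is preserved by the assumption $n\ge 3k+1$.
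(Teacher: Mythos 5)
Your overall skeleton is the same as the paper's: an induction on $k$ in which each step consumes two rows, three fresh columns, and one column reorientation, so that $k$ flips drive the top travel through all $2k$ rows using $3k+1$ columns. But the proposal defers exactly the step that constitutes the proof. The paper's induction step is a concrete pigeonhole on a $2\times 4$ block: after the inductive hypothesis places $TT_{S'}$ at row $2k-1$ by column $3k-2$ with $|S'|\le k-1$, one examines the four entries of rows $2k-1$ and $2k$ in columns $3k-2,\dots,3k+1$; if the travel does not already descend there, the four entries of row $2k-1$ in those columns are equal, and among the entries of row $2k$ one can always choose a single column $l\in\{3k-1,3k\}$ (or $3k+1$ in the other case) whose flip simultaneously creates the vertical step in row $2k-1$ \emph{and} leaves a sign change in row $2k$ to its right, making the travel positive. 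Your text only asserts that ``each flip advances the travel by (at least) two rows'' and then, in the ``main obstacle'' paragraph, concedes that you do not yet know how to guarantee this; that is precisely the content of the lemma, so as written the argument is not complete. Note also that a single flip does not automatically buy two rows --- flipping a column creates a sign change in the current row but also flips the entry below it, which can \emph{destroy} the sign change you need in the next row; this is why the choice of which of the available columns to flip, via pigeonhole over three or four entries, is unavoidable.

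Two further points. First, the bottom travel plays no role here: positivity of $TT_S$ is detected entirely by $TT_S$ reaching row $r$ with a sign change, and the paper never invokes $BT$ or any ``collision'' of the two travels; that part of your plan is a detour. Second, your justification for $S\subseteq\{2,\dots,n\}$ is not right: Remark~\ref{prop-lom}(i) concerns entries $a_{i,j}$ with $i>j$ and does not make column-$1$ reorientations cosmetic (flipping column $1$ changes $a_{1,1}$ and hence whether $TT$ descends at column $2$). The correct reason column $1$ is avoided is simply that the inductive construction only ever selects columns from $\{2,\dots,3k+1\}$.
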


\begin{proof}
%For $k=1$, first suppose that $a_{1,4}\in TT$. Then $a_{1,i}\in TT$ for $1\leq i\leq4$ and so all of them have the same sign, i.e., $a_{1,1}=a_{1,3}=a_{1,3}=a_{1,4}$. By the pigeonhole principle there exists $j\in\{2,3\}$ such that $j$ is the minimum column index such that $a_{2,j} = a_{2,j'}$ for $j'\in\{3,4\}$. Then, consider $S=\{j\}$ and observe that $TT_{S}$ is positive in $A_S$. Now suppose that $a_{1,4}\notin TT$. If $TT$  is not positive, then $a_{2,3}, a_{2,4}\in TT$ and $a_{2,3} = a_{2,4}$. Then, consider $S=\{4\}$ and notice that $TT_S$ is positive in $A_S$, proving the case $k=1$.
We will prove the lemma by induction on $k$ where the case $k=1$ can be easily checked (see Figure \ref{ejemplo3}). Thus, assume that the Lemma holds for every $k'\ge 1$ and let $k=k'+1$. By induction hypothesis on $k'$, there exists  $S'\subset \{2,\ldots, 3k'+1\}=\{2,\ldots, 3k-2\}$ with $|S'|\le k'=k-1$ such that $TT_{S'}$ arrives in $A_{S'}$ at row $2k'+1=2k-1$ and column $j$ with $j\leq 3k-2$.

\smallskip

Recall that $a^{S'}_{i,j}$ are now the entries of matrix $A_{S'}$. First suppose that $a^{S'}_{2k-1,3k+1}\in TT_{S'}$, then $a^{S'}_{2k-1,j}\in TT_{S'}$ for every $j\in \{3k-2, 3k-1, 3k, 3k+1\}$ and so $a^{S'}_{2k-1,3k-2} = a^{S'}_{2k-1,3k-1}=a^{S'}_{2k-1,3k}=a^{S'}_{2k-1,3k+1}$. By the pigeonhole principle there exists $l\in\{3k-1,3k\}$ such that $l$ is the minimum column index such that $a^{S'}_{2k,j} = a^{S'}_{2k,j'}$ for some $l'\in\{3k,3k+1\}$. Then, consider $S = S'\cup \{l\}$ and observe that $TT_{S}$ is positive in $A_S$ and $1\notin S$. Now suppose that $a^{S'}_{2k-1,3k+1}\notin TT_{S'}$. If $TT_{S'}$ is not positive in $A_{S'}$ then $a^{S'}_{2k,3k}, a^{S'}_{2k,3k+1}\in TT_{S'}$ and $a^{S'}_{2k,3k}=a^{S'}_{2k,3k+1}$. Thus, consider $S=S'\cup\{3k+1\}$ and notice that $TT_{S}$ is positive  %at column $3k+1$ 
in $A_S$ where $1\notin S$, concluding the proof.
\end{proof}

\begin{figure}[htb]
\begin{center}
 \includegraphics[width=0.6\textwidth]{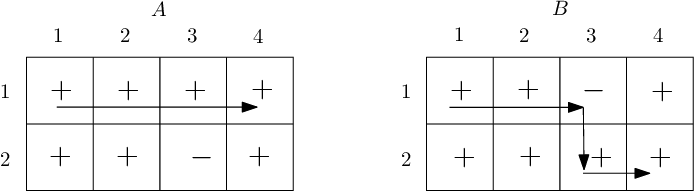}
\caption{If $a_{1,4}\in TT$, then depending on the signs of $a_{2,2}$, $a_{2,3}$ and $a_{2,4}$, we must reorient column $2$ or $3$. In the first example (matrix $A$), we must reorient column $2$ in order to make its top travel positive. If $a_{1,4}\notin TT$ (see matrix B), then we must reorient at most one column, for example column 4 (if necessary), in order to make the $TT$ positive. } \label{ejemplo3}
\end{center}
\end{figure}

%\begin{corollary}
 %   Every rank $r=2k$ LOM on $n\ge 3k+1\ge 4$ is not $k$-neighborly.
%\end{corollary}
%PERO ESTO ES CLARO PUES SIEMPRE $r\ge 2k+1$
Given a $(r\times n)$-matrix $A$, we denote by $PT_j$, with $2\le j \le n+1$, the set of plain travels of $A$ such that their first vertical movement is at column $j$ and where $P_{n+1}$ is the plain travel that does not make vertical movements. We define $PT_{\geq j} = \bigcup\limits_{i=j}^{n+1}PT_i$ and $PT_{\ge j}^{k}=\bigcup\limits_{i=j}^{n+1}PT^k_i$.

\smallskip

Given a plain travel $P$ of $A$, recall that $\mathcal{A}$ is the matrix obtained by reorienting a set of columns of $A$ whose top travel is $P$. Moreover, if $S$ is a set of columns, then the matrix $\mathcal{A}_S$ is the matrix obtained from $\mathcal{A}$ reorienting the columns of $S$ with top travel $P_S$.

\begin{lemma}\label{PTjk}
Let $A$ be a $(r\times n)$-matrix with $r=2k+2$ and $n\ge 3k+3$. Then $|PT_{j}^{k}|\leq 1$ for every $2\leq j\leq n-(3k+1)$.
\end{lemma}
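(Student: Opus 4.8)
The goal is to show that for a $(r\times n)$-matrix $A$ with $r=2k+2$ and $n\ge 3k+3$, and any column $j$ in the range $2\le j\le n-(3k+1)$, there is at most one $k$-neighborly plain travel whose first vertical movement occurs at column $j$. The plain travels in $PT_j$ all start by going horizontally across the first row up to column $j$, then drop into row $2$ at column $j$; what distinguishes two members of $PT_j$ is only their behavior from column $j$ onward. The key structural observation I would exploit is that the portion of the matrix $A$ lying weakly to the right of column $j$ is itself (after the forced reorientations that realize the initial horizontal segment as a top travel) a smaller Lawrence configuration to which \Cref{n_grande_baja_de_2_en_2} applies. Since the first vertical movement is at column $j$, a plain travel in $PT_j$ uses rows $1,2,\dots$ only from column $j$ on, and the remaining width available is $n-j+1\ge 3k+2$ columns, comfortably more than the $3(k+1)+1=3k+4$? — here I would track the exact index arithmetic carefully, because the bound $n-(3k+1)$ is precisely calibrated so that at least $3k+2$ columns remain, which is what feeds the induction base of \Cref{n_grande_baja_de_2_en_2}.

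\textbf{Main steps.} First I would fix $P,P'\in PT_j^{k}$ and assume for contradiction that $P\ne P'$. Both agree on columns $1,\dots,j$ (the forced horizontal run plus the drop at $j$), so they first diverge at some column $j_0\ge j$, where one travel makes a vertical move and the other a horizontal move, or they are at different rows. Second, I would use the fact that $P$ and $P'$ correspond, via \Cref{PT_and_reor} and \Cref{PT_and_reor_k}, to two distinct $k$-neighborly acyclic reorientations, hence to two distinct $k$-neighborly matrices $\mathcal A$ and $\mathcal A'$. Third — and this is the crux — I would show that because $P$ and $P'$ diverge while both living in the top $2(k+1)=r$ rows over a horizontal window of width at least $3k+2=3(k+1)-1$, one can combine the divergence with \Cref{n_grande_baja_de_2_en_2} to produce a set $S$ of at most $k$ columns, all lying in $\{2,\dots,n\}$ and in particular with $1\notin S$, such that $TT_S$ becomes positive in one of $\mathcal A,\mathcal A'$. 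That contradicts $k$-neighborliness (\Cref{LOM_k_neigh}) for that matrix, forcing $P=P'$ and giving $|PT_j^k|\le 1$.

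\textbf{The obstacle.} The hard part will be the third step: manufacturing the positive top travel from the divergence of $P$ and $P'$ while respecting the budget of only $k$ reorientable columns \emph{and} the restriction $1\notin S$. \Cref{n_grande_baja_de_2_en_2} is the right engine — it guarantees that a width-$(3k'+1)$ window in a rank-$2k'$ submatrix can be made positive using at most $k'$ reorientations avoiding column $1$ — but applying it requires carefully identifying the correct $2k$-row, $(3k+1)$-column submatrix determined by where $P$ and $P'$ split, and then verifying that the reorientation set it returns does not reuse the columns already committed to realizing the initial segment common to $P$ and $P'$. I would set up the induction-style bookkeeping so that the $r=2k+2$ rows split as two rows consumed by the shared horizontal prefix and divergence point, leaving exactly $2k$ rows and enough width for \Cref{n_grande_baja_de_2_en_2} to fire; the index juggling to confirm the window width is at least $3k+1$ given $j\le n-(3k+1)$ is where I expect the real care to be needed.
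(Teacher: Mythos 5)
Your proposal has a genuine gap at exactly the point you flag as ``the crux.'' You plan to take two distinct travels $P,P'\in PT_j^k$ and, from their divergence, manufacture via \Cref{n_grande_baja_de_2_en_2} a set $S$ of at most $k$ columns making one of them positive. This cannot work as stated, for a parity reason: after the forced drop from row $1$ to row $2$ at column $j$, a travel in $PT_j$ still has $r-1=2k+1$ rows available, an \emph{odd} number, whereas \Cref{n_grande_baja_de_2_en_2} is an engine for rank-$2k$ (even) submatrices. Your bookkeeping sentence ``two rows consumed by the shared horizontal prefix and divergence point, leaving exactly $2k$ rows'' only describes travels that make a \emph{second} vertical move immediately at column $j+1$; a travel that continues horizontally along row $2$ consumes only one row, leaves $2k+1$ rows, and is untouched by \Cref{n_grande_baja_de_2_en_2}. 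Moreover, for two travels that are both genuinely $k$-neighborly no choice of $\le k$ reorientations can make either positive by definition, so divergence alone plus \Cref{n_grande_baja_de_2_en_2} can never produce the contradiction you want; you need an independent uniqueness input.

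The paper's proof supplies exactly that missing input by partitioning $PT_j$ into $V_j$ (travels that drop again from row $2$ to row $3$ at column $j+1$) and $H_j$ (travels that do not). For $V_j$, deleting rows $1,2$ and columns $1,\dots,j$ leaves a $2k\times(n-j)$ matrix with $n-j\ge 3k+1$, and \Cref{n_grande_baja_de_2_en_2} shows every such travel fails to be $k$-neighborly, so $|V_j^k|=0$ --- this is the part of your plan that is sound. For $H_j$, deleting row $1$ and columns $1,\dots,j$ leaves a $(2k+1)\times(n-j)$ matrix whose plain travels are exactly the restrictions of $H_j$, and the bound $|H_j^k|\le 1$ comes from \Cref{r_odd_max_ort-lom}, the odd-rank $r=2k+1$ case of the conjecture imported from earlier work. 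Your proposal never invokes \Cref{r_odd_max_ort-lom} or any substitute for it, and without it the odd-rank residue cannot be controlled. To repair your argument you would need to add this case split and cite the odd-rank uniqueness result for the $H_j$ branch.
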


\begin{proof}
%For every $j\in \{2,\ldots, n-(3k+1)\}$ consider the following partition of $PT_j$. Let $H_j\subset PT_j$ be the set of plain travels that does not do a vertical movement at column $j+1$ and let $V_j\subset PT_j$ be the set of plain travels that does a vertical movement at column $j+1$.\\
Let $V_j$  (resp. $H_j$) be the set of plain travels of $ PT_j$ that make a vertical movement at column $j+1$ from rows $2$ to $3$ (resp. do not make a vertical movement at column $j+1$ from rows $2$ to $3$), see Figure \ref{ejemplo-vj}. Since $V_j$ and $H_j$ is a partition of $PT_j$, in particular we have that $|PT_j^k|=|V_j^k|+|H_j^k|$. 

\begin{figure}[htb]
\begin{center}
 \includegraphics[width=0.4\textwidth]{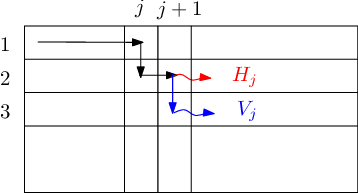}
\caption{The sets $V_j$ and $H_j$. } 
\label{ejemplo-vj}
\end{center}
\end{figure}

We first claim that $|H_{j}^{k}|\leq 1$. Consider the submatrix $B$ of $A$ obtained by deleting row 1 and the set of columns $\{1,\ldots, j\}$  and observe that the plain travels of $H_j$ restricted to $B$ coincides with all the plain travels of $B$. Since $B$ is a $(r'\times n')$-matrix with $r'=2k+1$ and $n'\geq r'+2$, then there is at most one $k$-neighborly plain travel in $B$ by Corollary \ref{r_odd_max_ort-lom}, concluding also the same for $A$, i.e.,
$|H_{j}^{k}|\leq 1$.

We now claim that $|V_j^k|=0$. Consider any $P\in V_j$ and let $\mathcal{A}$ be the  matrix obtaining by reorienting a set of columns of $A$ whose top travel is $P$. Now, consider the submatrix $\mathcal{B}$ of $\mathcal{A}$ obtained by deleting the set of rows $\{1,2\}$ and the set of columns $\{1,\ldots,j\}$ and observe that $P$ is also the top travel of $\mathcal{B}$. Applying Lemma \ref{n_grande_baja_de_2_en_2} to $\mathcal{B}$, there exists a set of columns $S\subset \{j+2,\ldots,n\}$ with $|S|\leq k$ such that $P_S$ is positive in $\mathcal{B}_S$. Notice that $P_S$ is also the top travel of $\mathcal{B}_S$ since $S\subset \{j+2,\ldots,n\}$. Then, $P_S$ is  positive in $\mathcal{A}_S$ obtaining that $P$ is not $k$-neighborly in $\mathcal{A}$. Hence, $|V_j^k|=0$.

Therefore $|PT_j^k|=|V_j^k|+|H_j^k|\leq 1$, concluding the proof.
\end{proof}

\begin{lemma}\label{Extesionk}
Let $A$ be a $(r\times n)$-matrix with $r=2k+2$, $n\geq r+1$ and
suppose that for some $j\geq 2$ and every $P\in PT_{\geq j}$ there exists $S\subseteq \{1,\ldots,j-1\}$ with $|S|\leq k$ such that $P_S$ arrives in $\mathcal{A}_S$ at row $r$ in at most column  $j-1$. Then $|PT_{\geq j}^{k}|\leq 1$.
\end{lemma}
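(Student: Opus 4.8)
=== PROOF PROPOSAL (LaTeX) ===

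\textbf{Plan.} The hypothesis supplies, for every plain travel $P\in PT_{\geq j}$, a reorienting set $S\subseteq\{1,\ldots,j-1\}$ of size at most $k$ that forces $P_S$ to reach the bottom row $r=2k+2$ by column $j-1$. The goal is to show that, \emph{after} this forced descent, the remaining tail of any $k$-neighborly travel is essentially rigid, so that at most one $k$-neighborly travel can survive in the whole union $PT_{\geq j}$. The natural strategy is to reduce the tail behaviour to the odd-rank case already settled in \Cref{r_odd_max_ort-lom}, exactly as was done inside the proof of \Cref{PTjk}.

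\textbf{Key steps.} First I would take any $P\in PT_{\geq j}^{k}$ and invoke the hypothesis to obtain $S\subseteq\{1,\ldots,j-1\}$, $|S|\leq k$, such that $P_S$ arrives at row $r$ by column $j-1$ in $\mathcal{A}_S$. Second, I would argue that because $P$ is $k$-neighborly and $|S|\leq k$, the top travel $P_S$ of $\mathcal{A}_S$ must be non-positive; otherwise $S$ itself would witness a positive top travel with at most $k$ reorientations, contradicting $k$-neighborliness of $P$ via \Cref{LOM_k_neigh} and \Cref{PT_and_reor_k}. The point is that $P_S$ having reached the final row $r$ so early strongly constrains the signs in the lower-left block of $\mathcal{A}_S$. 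Third, and this is the crux, I would consider the submatrix obtained by deleting the columns $\{1,\ldots,j-1\}$ and all but the bottommost relevant rows, reducing the ``tail'' of $P$ to a travel on a matrix of odd rank $2k+1$ with at least $r+2$ columns (using $n\geq r+1$ together with the early arrival at column $j-1$). Applying \Cref{r_odd_max_ort-lom} to this reduced matrix yields at most one $k$-neighborly travel on the tail. Finally, since the prefix of $P$ up to column $j-1$ is determined once its tail is fixed (a plain travel is recovered from its top travel), I would conclude $|PT_{\geq j}^{k}|\leq 1$.

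\textbf{Main obstacle.} The delicate point is the passage from ``$P_S$ reaches row $r$ by column $j-1$'' to a clean odd-rank reduction. The set $S$ depends on $P$, so different $k$-neighborly travels in $PT_{\geq j}$ may use different reorienting sets, and I must ensure the reduced odd-rank matrix to which \Cref{r_odd_max_ort-lom} is applied is, up to reorientation of at most $k$ columns, the \emph{same} for all such $P$ — equivalently, that the two candidate $k$-neighborly travels cannot arrive at row $r$ along genuinely different paths. I expect to handle this by showing that any $k$-neighborly $P$ must in fact descend as fast as possible (never lingering two columns in a row once it can drop), so that its behaviour in columns $\geq j-1$ lives on a common $(2k+1)\times n'$ submatrix $B$ with $n'\geq 2k+3$; the uniqueness from \Cref{r_odd_max_ort-lom} applied to $B$ then caps $|PT_{\geq j}^{k}|$ by $1$. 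Verifying that the forced descent leaves enough columns ($n'\geq r'+2$) is where the hypothesis $n\geq r+1$ and the column bound $j-1$ must be combined carefully.
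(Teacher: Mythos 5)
There is a genuine gap. Your crux step --- deleting the columns $\{1,\ldots,j-1\}$ and all but the bottom rows to land in an odd-rank $(2k+1)$-row matrix and then invoking \Cref{r_odd_max_ort-lom} --- does not fit this situation. In \Cref{PTjk} that reduction works for the set $H_j$ precisely because every travel in $H_j$ descends from row $1$ to row $2$ at the same column $j$, so their tails all become genuine plain travels of one common $(2k+1)\times n'$ submatrix. The travels in $PT_{\geq j}$ have no such common descent point (their first vertical movements occur at various columns $\geq j$, and $PT_{n+1}$ never descends at all), so their tails are travels in the full $r=2k+2$ rows and do not restrict to plain travels of any fixed $(2k+1)$-row submatrix. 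The auxiliary claim you hope would rescue this --- that a $k$-neighborly $P$ must ``descend as fast as possible'' --- is neither proved nor what the hypothesis gives you; the hypothesis constrains $P_S$, not $P$, and only in the first $j-1$ columns. You also leave the uniformity of $S$ as an unresolved obstacle, when in fact it is immediate: every $P\in PT_{\geq j}$ stays in row $1$ throughout columns $1,\ldots,j-1$, so all the matrices $\mathcal{A}$ coincide on those columns and, since $S\subseteq\{1,\ldots,j-1\}$ and the arrival condition only involves those columns, a single $S$ works for all of them.

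The actual argument is more elementary and does not pass through \Cref{r_odd_max_ort-lom} at all. Take two distinct $P,P'\in PT_{\geq j}^{k}$ and the common $S$. Both $P_S$ and $P'_S$ sit in row $r$ from column $j-1$ through column $n$, and the matrices $\mathcal{A}_S$ and $\mathcal{A}'_S$ agree on columns $1,\ldots,j-1$; since $P\neq P'$ they must differ in some column $l\in\{j,\ldots,n\}$, hence $a_{r,l}\neq b_{r,l}$ there. As both row-$r$ sequences start from the same entry at column $j-1$ but disagree at column $l$, one of them contains a sign change along row $r$, i.e., one of $P_S$, $P'_S$ is positive. With $|S|\leq k$ this contradicts $k$-neighborliness of $P$ or $P'$ (your second step, which is correct, is exactly this implication). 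So the statement is right and your setup observations are sound, but the route you propose for the main step would not go through.
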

\begin{proof}
First notice that the set $S$ is independent of the choice of $P\in PT_{\geq j}$ since all of them do not make vertical movements in the first $j-1$ columns (see Figure \ref{ejemplo-TEO2}). Suppose by contradiction that $|PT_{\geq j}^{k}|\geq 2$ and consider $P,P'\in PT_{\geq j}^{k}$ different. Let $\mathcal{A}_S=(a_{i,j})$ and $\mathcal{A}_S'=(b_{i,j})$ be the matrices in which $P_S$ and $P'_S$ are the top travel,  respectively.
Since $P$ and $P'$ coincide in the first $j-1$ columns, $P_S$ and $P'_S$ will coincide as well in the first $j-1$ columns, since  $S\subseteq \{1,\ldots,j-1\}$. Nevertheless, as $P_S$ and $P'_S$ are different, it follows that there exists $l\in\{j,\ldots,n\}$ such that $a_{r,l}\neq b_{r,l}$ since by hypothesis both arrive at row $r$. Hence, either $P_S$ or $P'_S$ is positive, concluding that either $P$ or $P'$ is not $k$-neighborly and so, arriving  a contradiction. Therefore $|PT_{\geq j}^{k}|\leq 1$.
\end{proof}

\begin{figure}[htb]
\begin{center}
 \includegraphics[width=0.5\textwidth]{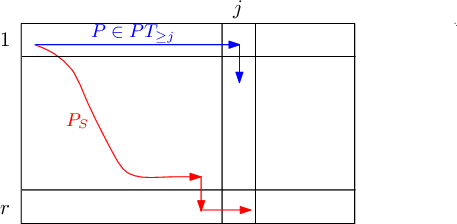}
\caption{The situation of Lemma \ref{Extesionk}} 
%\caption{All $P\in PT_j$ coincide in the first $j-1$ columns and $P_S$ arrives at row $r$ in at most column  $j-1$. } 
\label{ejemplo-TEO2}
\end{center}
\end{figure}

%for $k=1$ the answer is positive  if $r\leq 5$ and also holds for Lawrence oriented matroids. Conjecture \ref{conjkRoudneff} was also proved in \cite{HKM24}  for $k=2$ and $r=6$  and for every odd rank $r=2k+1$  \cite{HKM24}. 

%Conjecture \ref{conjkRoudneff} was proved for every odd rank $r=2k+1\ge 3$ \cite[Corollary 3.17]{HKM24}. 

Now we are ready to prove $k$-Roudneff's conjecture for LOMs with even rank  $r=2k+2$.

\begin{theorem}\label{rparkmax}
Let $\M$ be a rank $r=2k+2\ge 4$ LOM on $n\geq 6k+2=3r-4$ elements, then $$f_{\M}(r,n,k)\leq c_{r}(n,k).$$
\end{theorem}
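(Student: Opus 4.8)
The plan is to reduce everything to counting $k$-neighborly plain travels and then to show that, when grouped by the column of their first vertical movement, each group is a singleton. Since $r=2k+2$ and $n\ge 6k+2\ge 2(r-k)+1$, formula \eqref{formula-ciclico} specializes to $c_r(n,k)=2\bigl(\binom{n-1}{0}+\binom{n-1}{1}\bigr)=2n$, and for a LOM we have $f_{\M}(r,n,k)=2|PT^k|$ by \Cref{PT_and_reor_k} and \Cref{remark-2}. Hence it suffices to prove $|PT^k|\le n$. Writing $PT^k=\bigsqcup_{j=2}^{n+1}PT_j^k$ according to the first-descent column, there are exactly $n$ indices $j\in\{2,\dots,n+1\}$, so the theorem follows once I show $|PT_j^k|\le 1$ for every such $j$.

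For the \textbf{head range} $2\le j\le n-3k-1$ this is exactly \Cref{PTjk}, contributing at most $n-3k-2$. It therefore remains to bound the \textbf{tail} $|PT^k_{\ge n-3k}|$ by the number of remaining indices, $3k+2$; note this is tight, being attained by $C_r(n)$, so I must again establish $|PT_j^k|\le 1$ for each tail index. The indices $j\in\{n-1,n,n+1\}$ are immediate, since $PT_j$ is then a single travel. For the rest I would first attempt \Cref{Extesionk} at $j=n-3k$: its hypothesis requires reorienting at most $k$ of the first $n-3k-1$ columns so that the top travel reaches the last row $r$ already by column $n-3k-1$, and this is precisely where $n\ge 6k+2$ enters, since it guarantees $n-3k-1\ge 3k+1$ columns, the amount needed to drive the bottom $2k$ rows down via \Cref{n_grande_baja_de_2_en_2}. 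Whenever this descent can be forced, \Cref{Extesionk} gives $|PT^k_{\ge n-3k}|\le 1$ and the theorem follows outright.

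The \textbf{main obstacle} is the opposite, near-extremal case, modelled by $C_r(n)$ itself: when the first $n-3k-1$ columns are too aligned (rows almost constant), at most $k$ reorientations produce at most $2k$ sign changes along the travel, one short of the $2k+1$ descents needed to reach row $r=2k+2$, so \Cref{Extesionk} is genuinely unavailable. In this regime I would recover $|PT_j^k|\le 1$ for each tail index $j$ by pushing the $V_j/H_j$ decomposition of \Cref{PTjk} to the boundary. The horizontal travels $H_j$ restrict to plain travels of the $(2k+1)$-row submatrix on rows $2,\dots,r$, and are controlled by the odd-rank bound \Cref{r_odd_max_ort-lom} when enough columns remain and by the base cases of \Cref{remarkk-Roudff} ($n'\le r'+2$) otherwise. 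For the vertical travels $V_j$ I would invoke the rank obstruction that a rank-$2k$ uniform oriented matroid can never be $k$-neighborly, since its circuits have size $2k+1<2k+2$: consequently the continuation below the double descent at $j,j+1$ can be made positive by at most $k$ reorientations not touching columns $j$ or $j+1$ (the $1\notin S$ conclusion of \Cref{n_grande_baja_de_2_en_2}) once the residual block is large enough, with a finite check on the few remaining columns when it is not. Summing the at most $3k+2$ tail contributions with the head estimate yields $|PT^k|\le(n-3k-2)+(3k+2)=n$, as required. The delicate point throughout is the failure of the descent-forcing lemma in the last $3k$ columns, which is unavoidable precisely because the extremal matroid $C_r(n)$ lives exactly in that corner.
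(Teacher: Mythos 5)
Your reduction to $|PT^k|\le n$ and your treatment of the head range via Lemma \ref{PTjk} match the paper, but the way you handle the tail contains a genuine gap. You propose to prove $|PT_j^k|\le 1$ \emph{separately for each} of the $3k+2$ tail indices $j\in\{n-3k,\dots,n+1\}$, yet none of the tools you cite delivers this: in Lemma \ref{PTjk} the bound $|H_j^k|\le 1$ comes from Corollary \ref{r_odd_max_ort-lom} applied to the residual $(2k+1)\times(n-j)$ block, which needs $n-j\ge 2k+3$ and therefore fails for every $j\ge n-2k-2$; your fallback to Remark \ref{remarkk-Roudff} in the case $n'=r'+1$ only yields a bound of $\tbinom{2k+2}{k+1}/2>1$, not $1$; and the $V_j$ argument needs $3k+1$ columns to the right of column $j+1$ in order to invoke Lemma \ref{n_grande_baja_de_2_en_2}, which again fails throughout the tail. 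It is moreover not clear that the per-index statement is even true for $j$ close to $n$: the paper never proves it, and its argument is deliberately consistent with some tail classes $PT_j^k$ having two elements and others none.

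What the paper does instead, and what your proposal is missing, is a \emph{collective} count of the whole tail. It splits at $j=3k+2$ (not at $n-3k$), applies Lemma \ref{n_grande_baja_de_2_en_2} to the top $2k$ rows of the first $3k+1$ columns to find a single set $S\subseteq\{2,\dots,3k+1\}$ with $|S|\le k$ that works simultaneously for all $P\in PT_{\ge 3k+2}$ (they all coincide on that block), forcing every $P_S$ down to row $r-1$ by column $3k+1$. This lands exactly in the ``one descent short of row $r$'' situation you correctly identify as the obstacle; the paper resolves it not by singleton bounds but by observing that a non-positive $P_S$ is then determined by the single column in $\{3k+2,\dots,n\}$ (or none) at which it drops from row $r-1$ to row $r$, so the injective map $P\mapsto P_S$ gives $|PT^k_{\ge 3k+2}|\le n-3k$, and $3k+(n-3k)=n$. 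To repair your argument you should either adopt this counting step or supply a genuinely new proof of $|PT_j^k|\le 1$ in the range $n-2k-2\le j\le n-2$, which the cited lemmas do not cover.
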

\begin{proof}
We will prove that $f_{\M}(r,n,k)\leq  2n$ since $c_r(n,k)=2n$ by Equation (\ref{formula-ciclico}). As $\M$ is a LOM, then $\M=\M_A$ for some $(r\times n)$-matrix $A$. % with entries from $\{+1,-1\}$.%The set of plain travels of $A$ can be partitioned into the sets $PT_i$, $i=2,\cdots,n+1$ (where $PT_{n+1}$ is the only plain travel without vertical movements). 
By Lemma \ref{PT_and_reor_k}, $k$-neighborly plain travels of $A$ correspond to $k$-neighborly reorientations of $\M_A$. Then, it is enough to prove that $|PT^k_{\geq 2}|\le n$ by Remark \ref{remark-2}.
\vspace{0.2cm}

If $j\in \{2,\cdots,3k+1\}$, then $|PT_{j}^{k}|\leq 1$ by Lemma \ref{PTjk}, concluding that $|\bigcup\limits_{j=2}^{3k+1} PT_j^k|\le 3k$. Now suppose that $j\in\{3k+2,\cdots,n+1\}$. 
For each $P\in PT_{\ge 3k+2}$ let $\mathcal{A}$ be the matrix obtained by reorienting a set of columns of $A$  whose top travel is $P$ and consider the submatrix $\mathcal{A}'$ of $\mathcal{A}$ obtained by deleting rows $r-1$ and $r$ and the columns
$3k+2,\cdots,n$ (see Figure \ref{ejemplo-TEO3}). Note that regardless of the choice of $P\in PT_{\ge 3k+2}$, the matrix $\mathcal{A}$ (and so $\mathcal{A}'$) is the same in the first $3k+1$ columns since each $P$ coincides on that part of $\mathcal{A}$. % since they do not make vertical movements
As a consequence of Lemma \ref{n_grande_baja_de_2_en_2} applied to $\mathcal{A}'$, it follows that there exists a set of columns $S\subseteq \{2,\ldots, 3k+1\}$, with  $|S|\le k$, such that $P_S$ is positive in $\mathcal{A}'_S$. Hence,  $P_S$ arrives in $\mathcal{A}_S$ at row $r-1$ in at most the column $3k+1$ for every $P\in PT_{\ge 3k+2}$. If $P_S$ also arrives in $\mathcal{A}_S$ at row $r$ in at most the column $3k+1$, then by Lemma \ref{Extesionk} we obtain that $|PT^k_{\ge 3k+2}|\le 1$ and so $|PT^k_{\ge 2}|\le 3k+1< n$, concluding the proof. Otherwise, let $\mathcal{S}$ be the set of non positive top travels $P_S$ of $\mathcal{A}_S$ such that $P\in PT_{\ge 3k+2}$ and observe that $P_S$ makes at most one vertical movement in $\mathcal{A}_S$ from rows $r-1$ to $r$ at some of the columns $3k+2,\cdots,n$, obtaining that $|\mathcal{S}|\le n-(3k+1)+1=n-3k$. Finally, since $|PT^k_{\ge 3k+2}|\le |\mathcal{S}|$ we conclude that $|PT^k_{\ge 2}|\le n$ and the theorem holds.
\end{proof}
\begin{figure}[htb]
\begin{center}
 \includegraphics[width=0.6\textwidth]{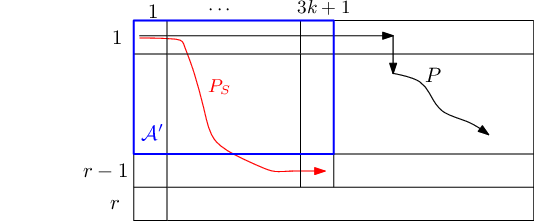}
\caption{The case $j\in\{3k+2,\cdots,n+1\}$ in Theorem \ref{rparkmax} } \label{ejemplo-TEO3}
\end{center}
\end{figure}

Finally, we notice that  Conjecture \ref{conjkRoudneff} has not been solved for LOMs with  $r=2k+2$ and $n\in \{r+3,\ldots,3r-5\}$.
%$r+3\le n \le 3r-5$. %even when the value $c_r(n,k)$ is known since it was computed in \cite[Theorem 3.11]{HKM24} for $n\ge r+2$.

\medskip
\subsection{A general upper bound for $f_\mathcal{M}(r,n,k)$}\label{newbounds_alternating}

%Given a $(r\times n)$-matrix $A$ with entries from $\{+1,-1\}$ with $n\ge r+1\ge j$, let $PT^j_{r\times n}$ be the set of all plain travels of $A$ not arriving at line $j$. If $j=r+1$, then $PT^j_{r\times n}$ is the set of all plain travels of $A$.
Next, we provide a general upper bound of $f_\mathcal{M}(r,n,k)$ when $\mathcal{M}$ is a LOM (Theorem \ref{theo-cota-sup}). First, we need to prove the following lemma.

\begin{lemma}\label{escalera}
Let $A$ be a $(r\times n)$-matrix with $n\ge 3k+1$, $r\ge 2k+1$ and let $P$ be a plain travel of $A$. Then the following conditions hold:
\begin{itemize}

\item[(1)] If $P$ makes a vertical movement from row $r-2k$ to $r-2k+1$ at column $j\leq n-3k$, then $P$ is not $k$-neighborly.

\item[(2)] If $P$ makes a vertical movement from row $r-2k+1$ to $r-2k+2$ at column $j\leq n-3k+2$, then $P$ is not $k$-neighborly.
\end{itemize}
\end{lemma}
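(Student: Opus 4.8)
The plan is to reduce both statements to Lemma~\ref{n_grande_baja_de_2_en_2}, the engine that turns an even number of rows together with enough columns into a positive top travel using few reorientations. Recall that $P$ fails to be $k$-neighborly exactly when some set $S$ of at most $k$ columns makes the top travel $P_S$ of $\mathcal{A}_S$ positive, so in each part it is enough to produce such an $S$. Throughout I would rely on the principle that once $P$ has descended to a fixed row at a fixed column, the part of the matrix strictly below and to the right is a submatrix whose own top travel continues $P$, and that reorienting only columns strictly to the right of that descent column leaves the already-traversed portion of $P$ intact.

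For (1): assuming $P$ drops from row $r-2k$ to row $r-2k+1$ at a column $j\le n-3k$ (row $r-2k\ge 1$ since $r\ge 2k+1$), I would pass to the submatrix $\mathcal{B}$ of $\mathcal{A}$ on rows $\{r-2k+1,\dots,r\}$ and columns $\{j,\dots,n\}$. This $\mathcal{B}$ has exactly $2k$ rows and $n-j+1\ge 3k+1$ columns, and its top travel begins at the corner entry in row $r-2k+1$, column $j$, agreeing with the continuation of $P$. Lemma~\ref{n_grande_baja_de_2_en_2} then supplies $S\subseteq\{j+1,\dots,n\}$ with $|S|\le k$ for which the top travel of $\mathcal{B}_S$ is positive; since every element of $S$ exceeds $j$, the top travel of $\mathcal{A}_S$ coincides with $P$ up to the drop at column $j$ and thereafter follows the positive top travel of $\mathcal{B}_S$, so $P_S$ is positive and $P$ is not $k$-neighborly.

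For (2): here $P$ drops from row $r-2k+1$ to row $r-2k+2$ at a column $j\le n-3k+2$, leaving only $2k-1$ rows below, an odd count to which Lemma~\ref{n_grande_baja_de_2_en_2} does not directly apply; repairing this parity is the main obstacle. My plan is to spend at most one reorientation to restore evenness. When $k=1$ the travel already lies in the last row $r$ at column $j\le n-1$, and reorienting column $j+1$ if needed creates a sign change between columns $j$ and $j+1$ of row $r$, making $P_S$ positive with $|S|\le 1$. When $k\ge 2$, I would reorient column $j+1$ whenever $a_{r-2k+2,j}=a_{r-2k+2,j+1}$ so as to force a drop from row $r-2k+2$ to row $r-2k+3$ at column $j+1$; this touches at most the single column $j+1$ and nothing to its left. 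I would then apply Lemma~\ref{n_grande_baja_de_2_en_2} at parameter $k-1$ to the submatrix on rows $\{r-2k+3,\dots,r\}$ (exactly $2(k-1)$ rows) and columns $\{j+1,\dots,n\}$, whose count $n-j\ge 3k-2=3(k-1)+1$ is precisely what $j\le n-3k+2$ guarantees, obtaining $S'\subseteq\{j+2,\dots,n\}$ with $|S'|\le k-1$ making the relevant top travel positive. Taking $S=\{j+1\}\cup S'$ (or $S'$ alone when no forcing was needed) gives $|S|\le k$ and a positive $P_S$, so again $P$ is not $k$-neighborly.

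The steps requiring the most care are the bookkeeping that every reoriented column lies strictly to the right of the descent in question, so that the traversed part of $P$ is preserved and the submatrix top travels genuinely continue $P$, and the column count $n-j\ge 3(k-1)+1$ in part (2). I expect the parity repair in (2), namely trading one reorientation for one forced descent in order to invoke the even-rank Lemma~\ref{n_grande_baja_de_2_en_2} at parameter $k-1$, to be the crux of the argument.
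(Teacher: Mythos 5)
Your argument is correct. For part (1) it is essentially identical to the paper's proof: both pass to the $2k\times(n-j+1)$ submatrix on rows $\{r-2k+1,\ldots,r\}$ and columns $\{j,\ldots,n\}$, invoke Lemma~\ref{n_grande_baja_de_2_en_2}, and observe that the resulting set $S$ avoids column $j$, so the already-traversed prefix of $P$ is preserved and $P_S$ is positive in $\mathcal{A}_S$. For part (2) you take a genuinely different route around the parity obstruction. The paper deletes the \emph{last} row and the \emph{last} column, applying Lemma~\ref{n_grande_baja_de_2_en_2} with parameter $k-1$ to rows $\{r-2k+2,\ldots,r-1\}$ and columns $\{j,\ldots,n-1\}$; this forces $P_S$ down into row $r$ at some column $i<n$ with $|S|\le k-1$, and the spare reorientation is then spent at the end on column $n$ (if $P_S$ is not already positive) to create the sign change in row $r$. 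You instead spend the spare reorientation at the \emph{front}: reorient column $j+1$ (when $a_{r-2k+2,j}=a_{r-2k+2,j+1}$) to force an immediate second descent from row $r-2k+2$ to row $r-2k+3$, then apply the even-rank lemma with parameter $k-1$ to rows $\{r-2k+3,\ldots,r\}$ and columns $\{j+1,\ldots,n\}$. Both residual submatrices have $2(k-1)$ rows and $n-j\ge 3(k-1)+1$ columns, so the counting is the same, and in both arguments every reoriented column lies strictly to the right of the descent at column $j$, which is the bookkeeping the paper highlights with its remark that $j\notin S$. What each version buys: yours avoids the paper's terminal case split on whether $P_S$ is already positive in $\mathcal{A}_S$, at the price of treating $k=1$ separately (the residual block is empty there), whereas the paper's choice degenerates gracefully at $k=1$ since the travel then sits in row $r$ at column $j<n$ with $S=\emptyset$ and only the final reorientation of column $n$ is needed.
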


\begin{proof}
%Let $\mathcal{A}$ be the matrix obtained by reorienting a set of column indices of $A$ whose top travel is $P$.
%\vspace{0.2cm}

(1)  Consider the submatrix $\mathcal{A}'$ of $\mathcal{A}$ obtained by deleting the set of rows $\{1,\ldots,r-2k\}$ and the set of columns $\{1,\ldots,j-1\}$ from $\mathcal{A}$ and notice that $P$ restricted to $\mathcal{A}'$ is the top travel of $\mathcal{A}'$.
 As $\mathcal{A}'$ is a $(r'\times n')$-matrix with $r'=2k$ and $n'\geq 3k+1$, then applying Lemma \ref{n_grande_baja_de_2_en_2} to matrix $\mathcal{A}'$, there exists $S\subseteq \{j+1,\ldots,n\}$ with $|S|\leq k$ such that the restriction of $P_S$ in $\mathcal{A}'_S$ is positive in $\mathcal{A}'_S$. As $j\notin S$ we notice that $P_S$ is positive in $\mathcal{A}_S$,
 %$S\subset \{j+1,\ldots,n\}$.
 concluding that $P$ is not $k$-neighborly.
\vspace{0.2cm}

(2) Consider the submatrix $\mathcal{A}'$ of $\mathcal{A}$ obtained by deleting the set of rows $\{1,\ldots,r-2k+1\}\cup\{r\}$ and the set of columns $\{1,\ldots,j-1\}\cup\{n\}$ from $\mathcal{A}$. Notice that $P$ restricted to $\mathcal{A}'$ is the top travel of $\mathcal{A}'$. Let $k'=k-1$ and observe $\mathcal{A}'$ is a  $(r'\times n')$-matrix with $r'=2k'$ and $n'\geq 3k'+1$. Hence, applying Lemma \ref{n_grande_baja_de_2_en_2} to matrix $\mathcal{A}'$, there exists $S\subseteq \{j+1,\ldots,n-1\}$ with $|S|\leq k'$ such that the restriction of $P_{S}$ in $\mathcal{A}'_S$ is positive in $\mathcal{A}'_S$ and so, $P_{S}$ arrives at row $r$ at some column $i<n$ in $\mathcal{A}_S$ (see Figure \ref{ejemplo-TEO4} (a)). %since $j,n \not\in S$. 
The above is true since $j \not\in S$, otherwise we can not assure that $P_S$ concides in $\mathcal{A}_S$ and $\mathcal{A}'_S$ (see Figure  \ref{ejemplo-TEO4} (b)). 
If $P_{S}$ is positive in $\mathcal{A}_{S}$, then $P$ is not $(k-1)$-neighborly and the result follows. Otherwise, consider $S'=S\cup\{n\}$ and notice that $P_{S'}$ is positive in  $\mathcal{A}_{S'}$ since $n\not \in S$. Therefore, $P$ is not $k$-neighborly.
\end{proof}
Figure \ref{ejemplo-TEO4} shows in (a) the submatrix
$\mathcal{A}'$  described in Lemma \ref{escalera} (2) where $P_{S}$ arrives at row $r$ at some column $i<n$. On the other hand, we notice in (b) that if column $j=5\in S$, then we can not assure that $P_{S}$ lies in $\mathcal{A}'$.
\begin{figure}[htb]
\begin{center}
 \includegraphics[width=1\textwidth]{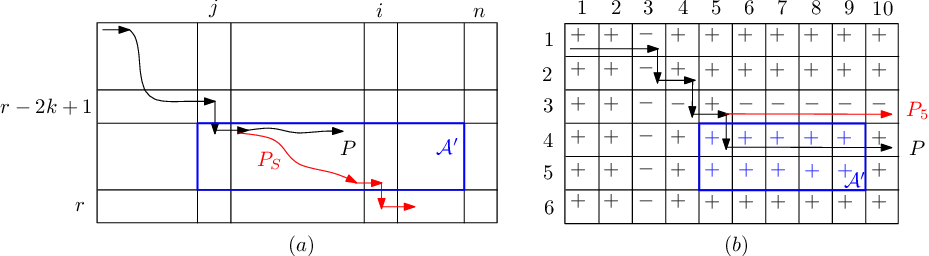}
\caption{The situation of Lemma \ref{escalera} (2)}
 \label{ejemplo-TEO4}
\end{center}
\end{figure}

The following observation will be used to prove Theorem \ref{theo-cota-sup}.

\begin{remark}\label{num_total_PT}
  The total number of plain travels of a $(r\times n)$-matrix  is $\sum\limits_{i=0}^{r-1} {n-1 \choose i}$.
\end{remark} 
\begin{proof} Any plain travel $P$ can make at most $r-1$ vertical movements and there are $n-1$ possibilities to choose $P$ to make  a vertical movement. 
 \end{proof}

\begin{theorem}\label{theo-cota-sup}
Let $\mathcal{M}$ be a rank $r\ge 2k+1\ge 3$ LOM on $n\geq 2r-1$ elements. Then 
%$$f_\mathcal{M}(r,n,k) \leq \sum_{i=0}^{r-1} \dbinom{n-1}{i} - \sum_{i=1}^{k} \dbinom{n-3i-1}{r-2i}-\sum_{i=1}^{k} \dbinom{n-3i+1}{r-2i+1} $$
$$f_\mathcal{M}(r,n,k) \leq c_r(n,k)+ 2\Biggl(\sum_{i=r-2k}^{r-1} \dbinom{n-1}{i} - \sum_{i=4}^{2k+3} \dbinom{n-3\lfloor\dfrac{i-1}{2}\rfloor+1+(i\; \textrm{mod} \; 2)}{r+3-i}\Biggl) $$
\end{theorem}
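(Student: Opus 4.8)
The plan is to bound $f_{\mathcal M}(r,n,k)=2|PT^k|$ by estimating how many plain travels $P$ of $A$ can possibly be $k$-neighborly, and to do this by \emph{subtracting} from the total count of plain travels (Remark \ref{num_total_PT}) a guaranteed population of \emph{non}-$k$-neighborly plain travels. The starting point is the identity $|PT|=\sum_{i=0}^{r-1}\binom{n-1}{i}$, so $c_r(n,k)=2\sum_{i=0}^{r-1-2k}\binom{n-1}{i}$ already accounts for the plain travels that make at most $r-1-2k$ vertical movements. What remains is to show that a large, explicitly countable subfamily of the plain travels making \emph{more} than $r-1-2k$ vertical movements (equivalently, those dropping below row $r-2k$ too early) are forced to be non-$k$-neighborly, and then $|PT^k|\le |PT|-(\text{that subfamily})$ gives the stated bound.

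First I would rewrite the excess term $2\sum_{i=r-2k}^{r-1}\binom{n-1}{i}$ as counting plain travels by the row at which they first pass a given threshold, matching each index $i$ to the event ``$P$ has made $i$ vertical movements.'' The key structural input is Lemma \ref{escalera}: part (1) says any $P$ making the vertical move from row $r-2k$ to $r-2k+1$ at a column $j\le n-3k$ is not $k$-neighborly, and part (2) does the analogous thing for the move from row $r-2k+1$ to $r-2k+2$ at column $j\le n-3k+2$. Iterating this row by row (the two cases of the lemma suggest a general pattern, descending through rows $r-2k, r-2k+1,\dots$ with the column thresholds shrinking by roughly $3$ every two rows and shifting by one according to parity), I would identify, for each vertical movement happening ``too far left,'' a whole block of plain travels that are certified non-$k$-neighborly. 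The subtracted double sum $2\sum_{i=4}^{2k+3}\binom{n-3\lfloor(i-1)/2\rfloor+1+(i\bmod 2)}{r+3-i}$ is exactly the count of these certified-bad travels: the index $i$ tracks how deep (which row) and at which parity the premature vertical movement occurs, the binomial upper argument $n-3\lfloor(i-1)/2\rfloor+1+(i\bmod 2)$ records the shrinking column threshold from the lemma's hypotheses, and the lower argument $r+3-i$ counts the free vertical moves available in the remaining rows.

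Concretely, the steps I would carry out are: (a) fix the correspondence between ``$P$ makes its vertical movement off row $r-2k+t$ at column $\le$ threshold$(t)$'' and the term indexed by $i$ in the subtracted sum, verifying the threshold bookkeeping $n-3\lfloor(i-1)/2\rfloor+1+(i\bmod 2)$ against Lemma \ref{escalera}'s column bounds $n-3k$ and $n-3k+2$ at the base cases; (b) argue the families counted by distinct $i$ are disjoint, so their sizes add, by keying each travel to the \emph{first} row at which it descends prematurely; (c) invoke Lemma \ref{escalera} (extended across the rows below row $r-2k$, which is legitimate since deleting the top rows of $\mathcal A$ leaves a matrix to which the lemma applies with $k$ decreased) to conclude every travel in these families is non-$k$-neighborly; and (d) combine $|PT^k|\le|PT|-\sum(\text{bad families})$ with Remark \ref{num_total_PT} and the formula for $c_r(n,k)$ to read off the claimed inequality. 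The hypothesis $n\ge 2r-1$ is what keeps all the column thresholds nonnegative and the binomial coefficients meaningful, so I would check the ranges are valid throughout.

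The main obstacle I expect is the bookkeeping in steps (a)--(b): matching the somewhat opaque closed form of the subtracted double sum, especially the floor and parity corrections $-3\lfloor(i-1)/2\rfloor+(i\bmod 2)$, to the geometric picture of plain travels descending two rows per ``$3k$-shift.'' Lemma \ref{escalera} only handles the first two descents explicitly, so the real work is formulating and proving the correct general inductive version (each further pair of rows costs $3$ columns of threshold, with the odd/even asymmetry coming from whether the deleted bottom row forces an extra reorientation as in the $S'=S\cup\{n\}$ trick of part (2)) and then verifying that the resulting counts are \emph{exactly} the claimed binomials with no double-counting. Establishing disjointness of the bad families and pinning the index shift so the two endpoints $i=4$ and $i=2k+3$ land correctly is the delicate part; the non-$k$-neighborliness itself follows cleanly from Lemma \ref{escalera} together with Lemma \ref{n_grande_baja_de_2_en_2}.
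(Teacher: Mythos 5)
Your overall architecture matches the paper's: bound $f_{\M}(r,n,k)=2|PT^k|$ by subtracting from the total $\sum_{i=0}^{r-1}\binom{n-1}{i}$ of Remark \ref{num_total_PT} an explicitly counted family of certified non-$k$-neighborly plain travels, with Lemma \ref{escalera} as the certificate. But there is a genuine gap at the heart of the count. Your families are keyed to ``the first row at which $P$ descends prematurely,'' and you assert their sizes are the binomials $\binom{n-3\lfloor (i-1)/2\rfloor+1+(i\bmod 2)}{r+3-i}$. They are not: counting travels by the location of their \emph{first} premature descent means counting paths constrained to avoid all earlier premature descents, which is not a single binomial coefficient. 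The missing idea is the choice of families that makes both the counting and the disjointness trivial: for each $i\in\{1,\dots,k\}$ take $\mathcal{P}_i$ (resp.\ $\mathcal{P}'_i$) to be the plain travels making \emph{exactly} $r-2i+1$ (resp.\ $r-2i$) vertical movements, all of them confined to the columns $\{2,\dots,n-3i+2\}$ (resp.\ $\{2,\dots,n-3i\}$). These have cardinalities exactly $\binom{n-3i+1}{r-2i+1}$ and $\binom{n-3i-1}{r-2i}$; they are pairwise disjoint for the trivial reason that their members have distinct total numbers of vertical movements, so no first-descent bookkeeping is needed; and every member makes its last descent --- from row $r-2i+1$ to $r-2i+2$, resp.\ from row $r-2i$ to $r-2i+1$ --- within the stated column range, which is exactly the hypothesis of Lemma \ref{escalera}(2), resp.\ (1), with $k$ replaced by $i$. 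Reindexing these $2k$ binomials yields precisely the subtracted sum (your endpoint identifications $i=4\leftrightarrow\mathcal{P}_1$ and $i=2k+3\leftrightarrow\mathcal{P}'_k$ are correct).

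A secondary point: you do not need to ``formulate and prove the correct general inductive version'' of Lemma \ref{escalera}, which you single out as the main technical burden. The lemma is stated for every admissible value of its parameter, so applying it with parameter $i\le k$ certifies that the travels above are not $i$-neighborly, and the elementary monotonicity (a plain travel that is not $i$-neighborly is not $j$-neighborly for any $j>i$) upgrades this to not $k$-neighborly. That one-line observation, which the paper records explicitly, replaces the inductive extension entirely; the only genuinely delicate step is the one described above, namely defining the bad families by their exact number of vertical movements confined to a prefix of columns.
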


\begin{proof}

Let $A$ be the $(r\times n)$-matrix such that $\M=\M_A$, by Lemma \ref{PT_and_reor_k} and Remark \ref{remark-2} it is enough to prove that
$$|PT^k|\leq \dfrac{c_r(n,k)}{2} +\sum_{i=r-2k}^{r-1} \dbinom{n-1}{i} - \sum_{i=4}^{2k+3} \dbinom{n-3\lfloor\dfrac{i-1}{2}\rfloor+1+(i\; \textrm{mod} \; 2)}{r+3-i} $$

For every $i\in \{1,\cdots,k\}$, let denote by $\mathcal{P}_i$ (resp. $\mathcal{P}'_i$ ) the set of plain travels of $A$ that make exactly $r-2i+1$ vertical movements (exactly $r-2i$ vertical movements) in the set of columns $\{2,\ldots,n-3i+2\}$ (resp. in the set of columns $\{2,\ldots,n-3i\}$).
\vspace{0,2cm}
Firstly, notice that $|\mathcal{P}_i|=\dbinom{n-3i+1}{r-2i+1}$ and $|\mathcal{P}'_i|=\dbinom{n-3i-1}{r-2i}$. Second, since $n\ge 2r-1>3k+1$ and $r\ge 2k+1$, by Lemma \ref{escalera}  each  $P\in \mathcal{P}_i\cup \mathcal{P}'_i$ is not $i$-neighborly.
Moreover, notice that 
$\mathcal{P}_i\cap \mathcal{P}_j=\mathcal{P}_i\cap \mathcal{P}'_j=\mathcal{P}'_i\cap \mathcal{P}'_j=\emptyset$ for every $i,j\in \{1,\cdots,k\}$. Thirdly, if a plain travel is not $i$-neighborly then it will not be  $j$-neighborly either, if $i<j$. \\

Therefore, we conclude that $A$ has at least
$\sum\limits_{i=1}^{k} \dbinom{n-3i+1}{r-2i+1}+\sum\limits_{i=1}^{k} \dbinom{n-3i-1}{r-2i}$ plain travels that are not $k$-neighborly. Furthermore, it can be deduced that 
$$\sum\limits_{i=1}^{k} \dbinom{n-3i+1}{r-2i+1}+\sum\limits_{i=1}^{k} \dbinom{n-3i-1}{r-2i}= \sum_{i=4}^{2k+3} \dbinom{n-3\lfloor\dfrac{i-1}{2}\rfloor+1+(i\; \textrm{mod} \; 2)}{r+3-i}$$

Since the total number of plain travels of $A$ is $\sum\limits_{i=0}^{r-1} \dbinom{n-1}{i}$ (Remark \ref{num_total_PT}), we conclude that
$$|PT^k|\le \sum\limits_{i=0}^{r-1} \dbinom{n-1}{i} - \sum\limits_{i=4}^{2k+3} \dbinom{n-3\lfloor\dfrac{i-1}{2}\rfloor+1+(i\; \textrm{mod} \; 2)}{r+3-i}$$
Finally, as $n\ge 2r-1\ge 2(r-k)+1$ and $r\ge 2k+1\ge3$ then  $\dfrac{c_r(n,k)}{2} =\sum\limits_{i=0}^{r-1-2k} \binom{n-1}{i}$ (Equation (\ref{formula-ciclico})), concluding the proof.
\end{proof}
\medskip

To give an example, applying the above theorem for $r=8$, $k=2,3$ and $n=15$, we obtain the upper bounds (best known to date) of $f_\mathcal{M}(r,n,k)$ when $\M$ is a LOM, which are $f_{\M}(8,15,2)\le  c_8(15,2) + 13876$ and $f_{\M}(8,15,3)\le  c_8(15,3) + 14696$.

%We computed the upper bound of the above theorem for some small values and we obtained the following. 
%Let $\M$ be a rank $r$ LOM on $n$ elements. If $(r,n)=(6,11)$  then $f_{\M}(6,11,1)\le  c_6(11,1) + 602$ \textcolor{red}{($k=1$ ya se hizo)}  and $f_{\M}(6,11,2)\le  c_6(11,2) + 880$ \textcolor{red}{(Caso $r=6, k=2$ ya se hizo en otro paper.)} 
%If  $(r,n)=(7,13)$  then $f_{\M}(7,13,1)\le  c_7(13,1) + 2256$ \textcolor{red}{($k=1$ ya se hizo)}
%$f_{\M}(7,13,2)\le  c_7(13,2) + 3506$ and $f_{\M}(7,13,3)\le  c_7(13,3) + 3636$ \textcolor{red}{(Lo hacemo en seccion de compu)}
%If  $(r,n)=(8,15)$  then $f_{\M}(8,15,1)\le  c_8(15,1) + 8514$, $f_{\M}(8,15,2)\le  c_8(15,2) + 13876$ and $f_{\M}(8,15,3)\le  c_8(15,3) + 14696$.
%\textcolor{red}{(Para $r=8, n=15, k=2,3$ SI que esta es la mejor cota superior conocida)}

\subsection{Number of reorientation classes of LOMs }\label{Chess-subsection}

In this subsection we calculate the number of reorientation classes of rank $r$ LOMs on $n$ elements. This will allow us in the next subsection to solve k-Roudneff's conjecture for low ranks with the help of the computer.
\medskip

Notice that for a LOM ${\M}$ there could be different matrices $A$ such that $\M={\M}_{A}$.  For instance,  the alternating oriented matroid can be obtaining by considering the matrix that has all its entries positive or also considering the matrix that has the same sign in each row. This is due to Equation (\ref{bases-lom}) and more generally we observe the following

\begin{remark}
    If matrix $A$ is obtaining from matrix $A'$ by reversing the sign of all the coefficients of a set of rows of $A'$ then $\M_{A}={\M}_{A'}$.
\end{remark}

Therefore, reorienting a set of rows of $A$ does not change the oriented matroid $\M_{A}$. On the other hand, reorienting a set of columns of $A$ does not change the reorientation class $[{\M}_{A}]$. Thus, we have the following observation.%HAT MATRICES DIFERENTES QUE REPRESENTAN MISMO MATROIDE Y QUE NO PUEDEN OBTENERSE UNA DE OTRA CON REORIENTACIONES DE COLUMNAS. 

\begin{remark}\label{col_y_filas}
 ${\M}_{A'}\in [{\M}_{A}]$ if and only if there exists a set of columns and (perhaps) rows of $A'$ such that reorienting them in $A'$ yields the matrix $A$. 
\end{remark} %No hay prueba pero esperemos que sea claro para el lector

%\begin{figure}[h]
%\begin{center}
% \includegraphics[width=.9\textwidth]{ejemplo7-11-3.eps}
%\caption{Two matrices $A$ and $A'$ with $\M_A'\in [\M_A]$, we can obtain $A'$ from $A$ just reorienting row $7$ and column $11$.} \label{ejemplo-same-class}
%\end{center}
%\end{figure}
Figure  \ref{chess} shows an example of two matrices $A$ and $A'$ with $\M_A'\in [\M_A]$ where it is necessary to reorient a column and a row of $A$ to obtain $A'$ (column $7$ and row $4$).
\vspace{0.2cm}

In order to compute the number of reorientation classes of rank $r$ LOMs on $n$ elements we need to introduce the chessboards. 
The \emph{chessboard} $B[A]$ of a  $(r\times n)$-matrix $A$ can be constructed from the entries of $A$.  It is defined by a black and white board of size $(r-1) \times (n-1)$, such that the square $s(i,j)$ has its upper left hand corner at the intersection of row $i$ and column $j$ in the matrix $A$; a square $s(i,j)$, with $1 \leq i \leq r-1$ and $ 1 \leq j \leq n-1$ will be said to be {\em black} if the product of the entries in $A$, $a_{i,j}, a_{i,j+1}, a_{i+1,j}, a_{i+1,j+1}$ is $-1$, and {\em white} otherwise (see Figure  \ref{chess}). %PONER QUE SE DEFINIO EN REF? Y QUE SE HA USADO EN VARIOS PAPERS PARA BLA, PERO POCO SE SABE SOBRE ELLOS. DECIR QUE AHORA PROBAREMOS QUE SON LA CLASE DE REORIENTACION

\begin{figure}[h]
\begin{center}
 \includegraphics[width=0.9\textwidth]{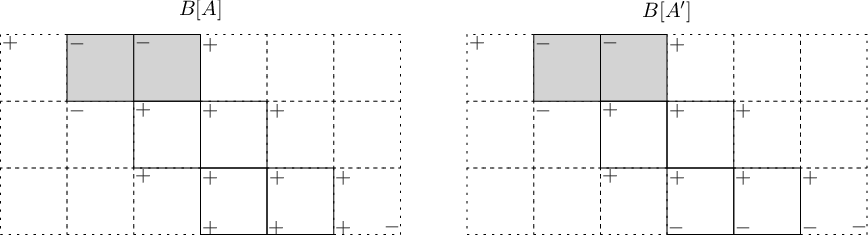}%ARREGALAR CHESS CON IPE para que se vean los squares (y sus signos) que no juegan nada. 
\caption{Two matrices $A$ and $A'$ with their correspondent chessboards $B[A]$ and $B[A']$. The dotted squares do not play any role in the definition of ${\M}_A$ and ${\M}_{A'}$ (see Remark \ref{squares_involved}).} \label{chess}
\end{center}
\end{figure}

By definition, we notice that the chessboard $B[A]$ is invariant under reorientation of columns and rows.  It turns out that the chessboards represent the reorientation classes of LOMs as we will see in the next theorem.

\begin{remark}\label{entry-determined}
Let $s(i,j)$ be a square of a chessboard $B[A]$ and suppose that three of the four entries corresponding to $s(i,j)$ are known. Then, the unknown entry of $s(i,j)$ is completely determined by the color of $s(i,j)$. 
\end{remark}
  
  \begin{theorem}\label{chessboard=reo_classes}
Let $A,A'$ be two matrices. Then ${\M}_{A'}\in [{\M}_{A}]$ if and only if $B[A]=B[A']$.
\end{theorem}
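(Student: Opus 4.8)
The plan is to read the statement through Remark \ref{col_y_filas}, which turns it into a purely combinatorial claim: $\M_{A'}\in[\M_A]$ holds exactly when $A$ can be obtained from $A'$ by reorienting a set of rows together with a set of columns. Writing such a reorientation as $a_{i,j}\mapsto\epsilon_i\delta_j a_{i,j}$ with $\epsilon_i,\delta_j\in\{+1,-1\}$, the two matrices then lie in the same orbit of this sign action, and the theorem asserts that the chessboard is a complete invariant of the orbit. The forward implication is the invariance already recorded in the text: reorienting a single row or column flips exactly two of the four entries of every $2\times 2$ block it crosses, so each product $a_{i,j}a_{i,j+1}a_{i+1,j}a_{i+1,j+1}$, and hence the colour of every square, is preserved. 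Combined with Remark \ref{col_y_filas}, $\M_{A'}\in[\M_A]$ forces $B[A]=B[A']$, so the content is the converse.

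For the converse I would build a normal form determined by the chessboard alone. First choose the $\delta_j$ so that the first row of $A$ becomes all $+1$, and then choose $\epsilon_i$ for $i\ge 2$ so that the first column becomes all $+1$; since a reorientation of a row $i\ge 2$ does not disturb the first row, the two steps do not conflict and produce a matrix $\hat A$ in the orbit of $A$ with an all-$+1$ first row and first column. The key point is that $\hat A$ is then forced by $B[A]$: applying Remark \ref{entry-determined} and sweeping in order of increasing $i+j$, each entry $\hat a_{i,j}$ with $i,j\ge 2$ is determined by the three already-known corners $\hat a_{i-1,j-1},\hat a_{i-1,j},\hat a_{i,j-1}$ and the colour of $s(i-1,j-1)$. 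Thus $\hat A$ depends only on the fixed boundary and the recorded colours, i.e. only on $B[\hat A]=B[A]$.

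Putting this together: if $B[A]=B[A']$ then $\hat A=\widehat{A'}$, since both are the unique matrix with all-$+1$ first row and column realising the common chessboard. Because $\hat A$ and $\widehat{A'}$ are reorientation-equivalent to $A$ and $A'$ respectively, $A$ and $A'$ are reorientation-equivalent, and Remark \ref{col_y_filas} yields $\M_{A'}\in[\M_A]$, completing the equivalence.

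I expect the main obstacle to be the dotted squares. The entries of Remark \ref{prop-lom}(i) that lie outside the relevant band do not affect $\M_A$, so $B[A]=B[A']$ must be read as agreement on the non-dotted squares, and the reconstruction sweep must be confined to the band; the naive first-row/first-column gauge-fixing above secretly normalises entries that are irrelevant for $\M_A$. The real work is therefore to choose a boundary whose size matches the $r+n-1$ available reorientation degrees of freedom and to verify that the relevant plaquettes still propagate that boundary across the whole band — equivalently, that the band's row/column incidence graph is connected and its relevant squares span the cycle space, so that no reconstruction step ever calls on a square with a corner outside the band. Once that bookkeeping is in place, the algebra of each individual step is the trivial content of Remark \ref{entry-determined}.
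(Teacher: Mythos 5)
Your argument is correct and follows essentially the same route as the paper: the forward direction is the invariance of the square colours under row/column sign flips (via Remark \ref{col_y_filas}), and the converse is a gauge-fixing plus propagation argument using Remark \ref{entry-determined} — the paper normalises $A$ toward $A'$ directly (columns to match the first row, then one row at a time), whereas you normalise both matrices to a canonical all-$+1$ first row and column, which is the same idea. The worry about dotted squares in your last paragraph is not needed for the theorem as stated, since $B[A]$ is defined as the full $(r-1)\times(n-1)$ board and the paper's proof (like yours) reconstructs the entire matrix from it.
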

\begin{proof}
   If ${\M}_{A'}\in [{\M}_{A}]$, then there exists a set of columns and rows of $A$ to obtain $A'$  by Remark \ref{col_y_filas}. As the reorientation of any set of columns and rows does not change
the chessboard we conclude that $B[A]=B[A']$.
%Now suppose that $B[A']=B[A]$. Recall that the chessboard is invariant under reorientations of columns and rows. 
%Given any square $s(i,j)$ of  $A'$, we notice that we may reorient some columns and rows of $s(i,j)$ in $A'$ in order to obtain the same entries of the square  $s(i,j)$ of  $A$. Once we have achieved the same entries of the square $s(i,j)$ of $A$, we notice that the consecutive square $s(i,j+1)$ of $A$ can be obtained by reorienting (if necessary) column $j+2$ since  $B[A']=B[A]$. Similarly, the square $s(i+1,j)$ of $A$ can be obtained by reorienting (if necessary) row $i+2$  since  $B[A']=B[A]$ (see Figure \ref{Chess_reor}). 
%Proceeding in this way, we can find a  set of columns and rows of $A'$ such that reorienting them in $A'$ yields the matrix $A$. Then, ${\M}_{A'}\in [{\M}_{A}]$ by Remark \ref{col_y_filas} and the theorem holds.\\
Now suppose that $B[A]=B[A']$. Recall that the chessboard is invariant under reorientations of columns and rows. We will give a procedure to obtain the matrix $A'$ from  $A$ as follows. First, we reorient the necessary columns of $A$ to obtain the first row of $A'$. Next, if all the entries of some row $i\ge 1$ of $A$ match those in row $i$ of $A'$, we reorient (if necessary) row $i+1$ of $A$ in order to obtain $a'_{i+1,1}$. Then, notice that in fact we have already obtained all entries of row $i+1$ of $A'$ by Remark \ref{entry-determined} since $B[A]=B[A']$. Proceeding in this way, we may obtain matrix $A'$ concluding that ${\M}_{A'}\in [{\M}_{A}]$ and so the theorem holds.
%\textcolor{blue}{
%Now suppose that $B[A']=B[A]$. Recall that the chessboard is invariant under reorientations of columns and rows. We will give a procedure to obtain the matrix $A'$ from the matrix $A$:\\ 
%1) First step, in matrix $A$ reorient (if necessary) columns $\{1,\ldots, n\}$ in order to obtain a new matrix $B=(b_{i,j})$ such that $b_{1,j}= a'_{1,j}$.\\
%2) Second step, for every $i\in \{2,\ldots r\}$, reorient (if necessary) the row $i$ in $B$ to obtain a new matrix $C=(c_{i,j})$ such that $c_{i,1}=a'_{i,1}$.\\
%Observe that in this method, the second step does not modify what was done in the first step. Furthermore, since $c_{1,1}=a'_{1,1}$, $c_{1,2}=a'_{1,2}$, $c_{2,1}=a'_{2,1}$ then the entry $c_{2,2}$ is completely determined by Remark \ref{entry-determined}, and so $c_{2,j}=a'_{2,j}$ for every $j\in \{2,\ldots, n\}$ by Remark \ref{entry-determined}. Using the same previous arguments, it can be seen that $C=A'$ and then ${\M}_{A'}\in [{\M}_{A}]$. Therefore, the theorem holds.
%}
\end{proof}

%\begin{figure}[h]
%\begin{center}
 %\includegraphics[width=.6\textwidth]{Chess_reor_col_rows.pdf} 
%\caption{$B[A']=B[A]$ but $A'\neq A$. Nevertheless, we may reorient row $i$ and columns $j$ and $j+2$ of $A'$ to obtain $A$.} \label{Chess_reor}
%\end{center}
%\end{figure}

By Remark \ref{prop-lom} (i)  it follows that there are some squares in $B[A]$ that do not play any role in the definition of ${\M}_A$ (see Figure \ref{chess}).
 \begin{remark}\label{squares_involved}
  Let $A$ be a $(r\times n)$-matrix. Then the only squares of $B[A]$ that are involved in the construction of ${\M}_A$ are the squares
$s(i,j)$ for $i=1\ldots,r-1$ and $j=i+1,\ldots, n-r+i-1$.
\end{remark}
%\begin{remark}
  %Let $A$ be a $(r\times n)$-matrix. Then the following squares of %$B[A]$ do not play any role in the construction of ${\M}_A$.%its corresponding Lawrence oriented matroid.
  %\begin{itemize}
   %   \item[(i)]  $s(i,j)$ for $j=1,\ldots, r-1$ and $i=j\ldots,r-1$.
  %\item[(ii)]    $s(i,n-j)$, for $j=1,\ldots, r-1$ and $i=1,\ldots,r-%j$ %equivalente: $s(i,j)$, for$i=1,\ldots,r+n-j$, $j=n-r+1,\ldots, n-%1$
 % \end{itemize}
%\end{remark}

We are now ready to calculate the number of reorientation classes of rank $r$ LOMs on $n$ elements.

\begin{theorem}\label{number-of-reorientations}
    The number of reorientation classes of rank $r$ LOMs on $n$ elements is $$2^{(n-r-1)(r-1)}$$ 
\end{theorem}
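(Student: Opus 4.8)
The plan is to count reorientation classes by identifying them with colourings of the relevant squares of the chessboard. By \Cref{chessboard=reo_classes}, the reorientation class $[\M_A]$ is completely determined by $B[A]$, and by \Cref{squares_involved} the only squares of $B[A]$ that affect $\M_A$ are the $s(i,j)$ with $1\le i\le r-1$ and $i+1\le j\le n-r+i-1$. So I would first argue that the reorientation classes of rank $r$ LOMs on $n$ elements are in bijection with the colourings of this set of relevant squares: distinct reorientation classes yield distinct relevant colourings because reorienting rows or columns leaves every square colour unchanged, while \Cref{chessboard=reo_classes} (whose reconstruction procedure only uses the relevant band) guarantees that equal relevant colourings force the same class.

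Next I would count the relevant squares. For each fixed row index $i\in\{1,\dots,r-1\}$ the column index $j$ ranges over $\{i+1,\dots,n-r+i-1\}$, giving $(n-r+i-1)-(i+1)+1=n-r-1$ squares, independently of $i$. Summing over the $r-1$ admissible rows produces exactly $(r-1)(n-r-1)=(n-r-1)(r-1)$ relevant squares, which is precisely the exponent in the statement.

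The remaining, and main, step is surjectivity: every colouring of the relevant squares must actually be realised by some matrix $A$. Here I would give an explicit construction using \Cref{entry-determined}. Set the whole first row and all out-of-band entries of $A$ equal to $+1$, and seed the leftmost relevant entry $a_{i,i}$ of each row to $+1$ as well. Then I would fill in the band by a sweep: processing the squares in order of increasing $i$ and, for each $i$, increasing $j$, the three entries $a_{i,j},a_{i,j+1},a_{i+1,j}$ of $s(i,j)$ are already known, so its prescribed colour determines the fourth entry $a_{i+1,j+1}$ uniquely by \Cref{entry-determined}. One checks that the parallelogram shape of the relevant band makes this sweep well defined and free of conflicts---the last relevant entry of each row, $a_{i+1,n-r+i+1}$, is constrained by no band-$i$ square and may simply be set to $+1$. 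This yields a matrix $A$ whose relevant chessboard is the prescribed colouring.

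Combining the three steps, the reorientation classes biject with the $2^{(r-1)(n-r-1)}$ colourings of the relevant squares, which is the claimed count. The step I expect to be the main obstacle is the realizability sweep: one must verify carefully that ordering the squares along the staircase band never over-determines an entry and that the chosen seed is consistent, so that the map from colourings to LOMs is genuinely onto; the injectivity half is essentially handed to us by \Cref{chessboard=reo_classes} together with \Cref{squares_involved}.
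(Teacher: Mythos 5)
Your proof is correct and follows essentially the same route as the paper: identify reorientation classes with colourings of the relevant squares via \Cref{chessboard=reo_classes} and \Cref{squares_involved}, and count $(n-r-1)(r-1)$ such squares. The paper's own proof is terser---it simply asserts that there are $2^{(n-r-1)(r-1)}$ different ways to create a chessboard---so your explicit realizability sweep using \Cref{entry-determined} merely fills in a surjectivity step that the paper leaves implicit.
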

\begin{proof}
 Let $\M$ be a rank $r$ LOM on $n$ elements and let $A$ be a $(r\times n)$-matrix such that $\M=\M_A$.  By Remark \ref{squares_involved} for each row $i=1,\ldots,r-1$ of $A$ there are $n-r-1$ squares that are involved in the construction of $\M_A$. Thus, there are $2^{(n-r-1)(r-1)}$ different ways to create a chessboard from a $(r\times n)$-matrix, which in turn are the number of reorientation classes of rank $r$ LOMs on $n$ elements by Theorem \ref{chessboard=reo_classes}.
\end{proof}

%%%%%%%%%%%%%%%%%%

%Next, we may answer \Cref{conjkRoudneff} affirmatively for $r=7$, $k=2$ and $n\ge 8$ when $\M$ is a Lawrence oriented matroid (Theorem \ref{2-roudneff-11} and \ref{2-roudneff}).

%%%%%%%%%%%%%%%%%%%%%%%%%%%%%%%
\subsection{$k$-Roudneff's conjecture for low ranks}\label{computer r=7_k=2}
In this section we prove $k$-Roudneff's Conjecture for LOMs for the cases $r=7$, $k=2$ and $n\ge 11$ (Corollary \ref{2-roudneff}), for $r=8$, $k=2,3$ and $n=11,12$ (Theorem \ref{thm_computer8}), for $r=9$, $k=2$ and $n=12$ and also for $r=9$, $k=3$ and $n\ge 12$ (Corollary \ref{3-roudneff}). 
%with  $r=7$, $k=2$ and $n\ge 11$, for LOMs with $r=8$, $k=2,3$ and $n=11,12$, and for LOMs with $r=9$, $k=2,3$ and $n\ge 12$.
We recall that \Cref{conjkRoudneff} was solved for LOMs when $k=1$ in  \cite{MR15}. 
\smallskip
%elements where $(r,n, k)\in\{(7,11, 2),(8, 13,2), (9,13,3)\}$ and $k$ is the neigborliness. 

In \cite{HKM24} it was proved that \Cref{conjkRoudneff} can be reduced to a finite number of cases as shown in the following theorem.

\begin{theorem}\cite[Theorem 3.16]{HKM24}\label{prop:onlythebaseishard} %Let $r\ge 2k+1\ge 3$.
If $f_{\M'}(r',n',k)\leq c_{r'}(n',k)$ for every  rank $r'\leq r$ uniform oriented matroid $\M'$  on $n'=2(r'-k)+1$ elements, then $$f_{\M}(r,n,k)\leq c_{r}(n,k)$$ for every rank $r$ oriented matroid $\M$  on $n\geq 2(r-k)+1$ elements.
\end{theorem}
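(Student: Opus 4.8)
The plan is to reduce the statement to the prescribed base cases by combining a perturbation argument with a deletion–contraction recursion that mirrors the Pascal-type recursion satisfied by $c_r(n,k)$. First I would dispose of the passage from general to uniform matroids: by Remark \ref{remarkk-Roudff} and the perturbation argument preceding it, every rank $r$ oriented matroid on $n$ elements can be perturbed to a uniform one $\M'$ on the same ground set with $f_{\M}(r,n,k)\le f_{\M'}(r,n,k)$, so it suffices to establish the bound for uniform $\M$. On the target side, the explicit value in (\ref{formula-ciclico}) yields, for $n\ge 2(r-k)+2$, the recursion
$$c_r(n,k)=c_r(n-1,k)+c_{r-1}(n-1,k),$$
which is just Pascal's rule $\binom{n-1}{i}=\binom{n-2}{i}+\binom{n-2}{i-1}$ applied termwise, valid because all three quantities lie in the regime $n'\ge 2(r'-k)+1$ where (\ref{formula-ciclico}) holds.

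The heart of the argument is the matching recursion for $f$. For a uniform rank $r$ oriented matroid $\M$ and any $e\in E$, consider the map $R\mapsto R\setminus e$ on reorientations. Since reorientation commutes with deletion and deletion preserves $k$-neighborliness (the circuits of $\M\setminus e$ are exactly the circuits of $\M$ avoiding $e$), this map sends $k$-neighborly reorientations of $\M$ to $k$-neighborly reorientations of $\M\setminus e$, and each fibre $\{R',R'\cup e\}$ has size at most $2$. The key claim is that a fibre has size $2$ precisely when $R'$, besides being $k$-neighborly for $\M\setminus e$, is also a $k$-neighborly reorientation of $\M/e$: the two extensions differ only in the sign of $e$, so a circuit $X$ through $e$ is $k$-neighborly for both choices if and only if the trace $X\setminus e$ has more than $k$ positive and more than $k$ negative entries after reorienting by $R'$; as $X$ ranges over the circuits of $\M$ through $e$, uniformity guarantees that these traces are exactly the circuits of $\M/e$, so the condition is that $\,_{-R'}(\M/e)=(_{-R'}\M)/e$ be $k$-neighborly. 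Counting the $k$-neighborly reorientations of $\M$ as (number of nonempty fibres) $+$ (number of fibres of size $2$) and bounding each term by the corresponding total gives
$$f_{\M}(r,n,k)\le f_{\M\setminus e}(r,n-1,k)+f_{\M/e}(r-1,n-1,k).$$

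With both recursions in hand I would induct on $n$, proving simultaneously for all ranks $r'\le r$ that every uniform rank $r'$ oriented matroid on $n'\ge 2(r'-k)+1$ elements satisfies $f\le c_{r'}(n',k)$. The base case $n'=2(r'-k)+1$ is exactly the hypothesis. For $n'\ge 2(r'-k)+2$, pick any $e$: then $\M\setminus e$ is uniform of rank $r'$ on $n'-1\ge 2(r'-k)+1$ elements and $\M/e$ is uniform of rank $r'-1$ on $n'-1\ge 2((r'-1)-k)+1$ elements, so both lie in the inductive range with strictly smaller $n$, and the recursion together with the induction hypothesis gives $f_{\M}\le c_{r'}(n'-1,k)+c_{r'-1}(n'-1,k)=c_{r'}(n',k)$. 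When contraction drops the rank to $2k$ the inequality is vacuous, since a rank $2k$ matroid has no $k$-neighborly reorientation and $c_{2k}(n'-1,k)=0$, so no extra base case is needed. Feeding this uniform bound into the perturbation reduction of the first paragraph finishes the proof.

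The main obstacle — and essentially the only place where genuine work is required — is the fibre-size-$2$ analysis identifying the ``doubly extendable'' reorientations of $\M\setminus e$ with the $k$-neighborly reorientations of $\M/e$. The delicate points are that this uses the uniformity hypothesis to match circuits of $\M/e$ with traces of circuits of $\M$ through $e$, and that the sign bookkeeping must be carried out carefully under the simultaneous operations of reorientation, deletion, and contraction. The perturbation monotonicity $f_{\M}\le f_{\M'}$, though conceptually a separate ingredient, is standard and I would simply cite it (via \cite[Corollary 7.7.9]{BVSWZ99}).
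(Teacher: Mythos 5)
Your proposal is correct and follows essentially the same route as the paper: the theorem itself is cited from \cite{HKM24}, but the paper's proof of its LOM analogue (Proposition \ref{coro-LOMs}) uses exactly your argument --- reduction to uniform matroids via Remark \ref{remarkk-Roudff}, induction on $n$ through the deletion--contraction inequality of Lemma \ref{thm:onlythebaseishard}, and the Pascal-type recursion $c_r(n-1,k)+c_{r-1}(n-1,k)=c_r(n,k)$. The only addition is that you supply a (correct) fibre-counting proof of the deletion--contraction inequality itself, which the paper simply cites as \cite[Lemma 3.15]{HKM24}.
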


In fact, the above result can be applied only for LOMs as we will see below (Proposition \ref{coro-LOMs}). To prove this, first we need the following inequality.  %Oxley 3.1.9

\begin{lemma}\cite[Lemma 3.15]{HKM24}\label{thm:onlythebaseishard} 
 Let $\mathcal{M}$ be  rank $r\ge2k+1\ge3$ uniform
 oriented matroid  and let $e\in E$. Then,
$$f_{\M}(r,n,k) \le f_{\M/ e}(r-1,n-1,k)+f_{\M\setminus e}(r,n-1,k).$$
\end{lemma}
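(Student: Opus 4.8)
The plan is to prove the inequality by a deletion--contraction bookkeeping over reorientations, organised according to their restriction to $E\setminus\{e\}$. Recall that $f_{\M}(r,n,k)$ counts the subsets $R\subseteq E$ for which ${}_{-R}\M$ is $k$-neighborly, and that reorientation commutes with both minors: for $R'=R\setminus\{e\}$ one has $({}_{-R}\M)\setminus e={}_{-R'}(\M\setminus e)$ and $({}_{-R}\M)/e={}_{-R'}(\M/e)$, independently of whether $e\in R$. Hence the map $R\mapsto R\setminus\{e\}$ sends the reorientations of $\M$ two-to-one onto the reorientations of the minors, with fibre $\{R',R'\cup\{e\}\}$ over each $R'\subseteq E\setminus\{e\}$. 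For each such $R'$ I would set $N(R')=|\{S\in\{R',R'\cup\{e\}\}: {}_{-S}\M \text{ is } k\text{-neighborly}\}|$, and let $C(R'),D(R')\in\{0,1\}$ indicate whether ${}_{-R'}(\M/e)$, resp.\ ${}_{-R'}(\M\setminus e)$, is $k$-neighborly. Since $\sum_{R'}N(R')=f_{\M}(r,n,k)$, $\sum_{R'}C(R')=f_{\M/e}(r-1,n-1,k)$ and $\sum_{R'}D(R')=f_{\M\setminus e}(r,n-1,k)$, it suffices to prove the pointwise inequality $N(R')\le C(R')+D(R')$ for every $R'$.

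First I would dispose of the deletion side. The circuits of $\M\setminus e$ are exactly the circuits of $\M$ whose support avoids $e$, so if ${}_{-R}\M$ is $k$-neighborly then each such circuit already satisfies $|X^+|>k$ and $|X^-|>k$; thus $({}_{-R}\M)\setminus e={}_{-R'}(\M\setminus e)$ is $k$-neighborly. In other words, as soon as one member of the fibre $\{R',R'\cup\{e\}\}$ is a $k$-neighborly reorientation of $\M$, we obtain $D(R')=1$. This already settles every $R'$ with $N(R')\le 1$, since then $N(R')\le 1= D(R')\le C(R')+D(R')$.

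The crux, and the step I expect to be the main obstacle, is the remaining case $N(R')=2$, where I must produce $C(R')=1$ to match. Writing $\M_0={}_{-R'}\M$, the hypothesis says that both $\M_0$ and ${}_{-e}\M_0={}_{-(R'\cup\{e\})}\M$ are $k$-neighborly, while $\M_0/e={}_{-R'}(\M/e)$ is the object whose $k$-neighborliness I want. The difficulty is that contraction \emph{alone} only guarantees $(k-1)$-neighborliness, since a circuit through $e$ loses one element from its positive (or negative) part when $e$ is contracted; the idea is to recover the lost unit from the \emph{second} reorientation. Using uniformity of $\M$, every circuit of $\M_0/e$ has the form $Y=X\setminus e$ for a circuit $X$ of $\M_0$ with $e\in\underline{X}$. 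If $e\in X^+$, then $k$-neighborliness of $\M_0$ gives $|Y^-|=|X^-|>k$, while in ${}_{-e}\M_0$ the element $e$ moves into the negative part, so $k$-neighborliness there forces $|X^+|-1>k$, whence $|Y^+|=|X^+|-1>k$ as well; the case $e\in X^-$ is symmetric. Therefore every circuit of $\M_0/e$ satisfies both inequalities, so $\M_0/e$ is $k$-neighborly and $C(R')=1$. Combining the three cases yields $N(R')\le C(R')+D(R')$ for all $R'$, and summation gives the lemma. I would also remark that the boundary rank $r=2k+1$ is automatically consistent: there a circuit is balanced $(k{+}1,k{+}1)$, flipping $e$ destroys $k$-neighborliness, so $N(R')=2$ cannot occur, matching the fact that $f_{\M/e}(r-1,n-1,k)=0$ in that range.
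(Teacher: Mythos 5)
This lemma is quoted from \cite[Lemma 3.15]{HKM24} and the paper gives no proof of it, so there is nothing internal to compare against; judged on its own, your argument is correct and complete. The fibre-by-fibre bound $N(R')\le C(R')+D(R')$ is exactly the right decomposition: deletion absorbs the case $N(R')\le 1$ because the circuits of $\M\setminus e$ are a subset of those of $\M$, and when both $R'$ and $R'\cup\{e\}$ work, uniformity guarantees every circuit of $\M_0/e$ is $X\setminus e$ with $e\in\underline{X}$, and the two neighborliness hypotheses together give $|X^+|-1>k$ and $|X^-|>k$ (or symmetrically), so the contraction term covers the second unit. This is the standard deletion--contraction argument for this inequality, and your closing consistency check at $r=2k+1$ is a nice sanity confirmation.
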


The following observation is well known and will be used in the following proposition.
%It is  well known that the family of LOMs is closed under contraction and deletion of elements (see \cite[Proposition 7.6.7]{BVSWZ99}). %(página 317) 
\begin{remark}\cite[Proposition 7.6.7]{BVSWZ99}\label{closedLOMs}
    The family of LOMs is closed under contraction and deletion of elements.
\end{remark}

Next, we show that Theorem \ref{prop:onlythebaseishard} can be apply only for LOMs. %Can be restricted for the family of LOMs % is closed under LOMs

\begin{proposition}\label{coro-LOMs} 
If $f_{\M'}(r',n',k)\leq c_{r'}(n',k)$ for every  rank $r'\leq r$ LOM $\M'$  on $n'=2(r'-k)+1$ elements, then $$f_{\M}(r,n,k)\leq c_{r}(n,k)$$ for every rank $r$ LOM $\M$  on $n\geq 2(r-k)+1$ elements.
\end{proposition}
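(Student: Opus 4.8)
The plan is to prove Proposition \ref{coro-LOMs} by following the exact same inductive scheme used in Theorem \ref{prop:onlythebaseishard}, but carrying along the additional structural information that every oriented matroid appearing in the induction is a LOM. The statement of Theorem \ref{prop:onlythebaseishard} is proved (in \cite{HKM24}) by an induction on $n$ whose inductive step is precisely Lemma \ref{thm:onlythebaseishard}, namely the recursive inequality $f_{\M}(r,n,k)\le f_{\M/e}(r-1,n-1,k)+f_{\M\setminus e}(r,n-1,k)$. The base cases of that induction are exactly the values $n'=2(r'-k)+1$ for ranks $r'\le r$. So the whole argument goes through verbatim for LOMs \emph{provided} that every matroid produced by the two reduction operations $\M\mapsto\M/e$ and $\M\mapsto\M\setminus e$ is again a LOM; this is exactly what guarantees that the base cases we need to invoke are base cases for \emph{LOMs} rather than for arbitrary oriented matroids.

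First I would fix $k\ge1$ and prove the statement by induction on $n\ge 2(r-k)+1$ for all ranks simultaneously. The base case $n=2(r-k)+1$ is precisely the hypothesis of the proposition, so there is nothing to prove there. For the inductive step, assume $n> 2(r-k)+1$ and let $\M$ be any rank $r$ LOM on $n$ elements. By Remark \ref{remarkk-Roudff} we may assume $\M$ is uniform, so Lemma \ref{thm:onlythebaseishard} applies and gives
\[
f_{\M}(r,n,k)\le f_{\M/e}(r-1,n-1,k)+f_{\M\setminus e}(r,n-1,k).
\]
By Remark \ref{closedLOMs}, both $\M/e$ and $\M\setminus e$ are again LOMs, now on $n-1$ elements (of ranks $r-1$ and $r$ respectively). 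Since $n-1\ge 2(r-1-k)+1$ and $n-1\ge 2(r-k)+1 - 1 \ge 2(r-k)+1$ fails only at the boundary, I would check the index bookkeeping carefully: applying the inductive hypothesis to each summand yields $f_{\M/e}(r-1,n-1,k)\le c_{r-1}(n-1,k)$ and $f_{\M\setminus e}(r,n-1,k)\le c_{r}(n-1,k)$.

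To finish, I would invoke the corresponding recursion for the alternating oriented matroid itself: since $C_r(n)$ is a LOM and its contractions and deletions by a generic element are alternating oriented matroids of the appropriate rank, the same inequality of Lemma \ref{thm:onlythebaseishard} holds with equality for the extremal family, giving $c_r(n,k)=c_{r-1}(n-1,k)+c_r(n-1,k)$ (this is the Pascal-type identity that Equation (\ref{formula-ciclico}) satisfies, so it can alternatively be verified directly from the binomial formula). Combining,
\[
f_{\M}(r,n,k)\le c_{r-1}(n-1,k)+c_r(n-1,k)=c_r(n,k),
\]
which is the desired bound. The only genuinely load-bearing step beyond routine bookkeeping is the closure of the LOM family under contraction and deletion (Remark \ref{closedLOMs}); I expect the main care to be needed in verifying that the reduced parameters $(r-1,n-1)$ and $(r,n-1)$ still lie in the range where either the inductive hypothesis or the base case of the proposition applies, so that we never step outside the LOM hypotheses. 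Since this is formally identical to the proof of Theorem \ref{prop:onlythebaseishard} with "oriented matroid" replaced everywhere by "LOM," the argument is short once closure is in hand.
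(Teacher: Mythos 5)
Your proposal is correct and follows essentially the same argument as the paper: induction on $n$ (and rank), with Lemma \ref{thm:onlythebaseishard} providing the recursive inequality, Remark \ref{closedLOMs} guaranteeing that $\M/e$ and $\M\setminus e$ remain LOMs, and the identity $c_{r}(n-1,k)+c_{r-1}(n-1,k)=c_{r}(n,k)$ from Equation (\ref{formula-ciclico}) closing the estimate. The index bookkeeping you flag does work out exactly as you suspect, so no further care is needed.
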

\begin{proof}
 Let us prove the result by induction on $r$ and $n$. By \Cref{thm:onlythebaseishard},
$f_{\M}(r,n,k) \le f_{\M/ e}(r-1,n-1,k)+f_{\M\setminus e}(r,n-1,k)$. Now, fix $r$ and let  $n>2(r-k)+1$. Notice that the inequality $f_{\M/ e}(r-1,n-1,k)\le  c_{r-1}(n-1,k)$ follows since $\M/ e$ is a LOM (Remark \ref{closedLOMs}) and by assumption all rank $r'=r-1$  LOM $\mathcal{M}'$ on $n'=2(r'-k)+1$ elements satisfy $f_{\M'}(r,n',k)\le c_{r}(n',k)$. On the other hand, the inequality 
 $f_{\M\setminus e}(r,n-1,k)\le c_r(n-1,k)$ then follows since $\M\setminus e$ is a  LOM (Remark \ref{closedLOMs}) and by induction on $n$ we know that it is verified for all rank $r$ LOM on $n-1$ elements.
Thus by induction this also holds for $n-1\ge 2(r-k)+1\ge 2(r'-k)+1$.
Now, a straight-forward computation using Equation (\ref{formula-ciclico}) yields
$$c_{r}(n-1,k)+c_{r-1}(n-1,k)=c_{r}(n,k).$$ Thus, we obtain that $f_{\M}(r,n,k) \le c_r(n,k)$.   
\end{proof}

%\begin{corollary}\label{coro-LOMs} 
%If $f_{\M'}(r',n',k)\leq c_{r'}(n',k)$ for every  rank $r'\leq r$ LOM $\M'$  on $n'=2(r'-k)+1$ elements, then $$f_{\M}(r,n,k)\leq c_{r}(n,k)$$ for every rank $r$ LOM $\M$  on $n\geq 2(r-k)+1$ elements.
%\end{corollary}

\subsubsection{The computer program to obtain $f_{\mathcal{M}}(r,n,k)$.}

In \cite[Subsection 4.1]{HKM24}, the authors provided a computer program to obtain $f_\mathcal{M}(r,n,k)$ from its chirotope, available at \cite{supplemental_data} (in fact they obtained it via something called the $o$-vector whose $i$-th entry is $o_i$ with $i=0,\ldots,\lfloor\frac{r-1}{2}\rfloor$ and $o_i$ is the number of $i$-neighborly reorientations of $\mathcal{M}$ that are not $i+1$-neighborly). Now, for a LOM $\mathcal{M}_A$, we may use Equation (\ref{bases-lom}) to obtain the signature of its basis an so its chirotope. Thus, using the computer program mentioned above we obtain  $f_{\mathcal{M}_A}(r,n,k)$.
%In fact, by Theorem \ref{chessboard=reo_classes} we know that two LOMs are in the same reorientation class if and only if they have the same chessboard. Therefore, 
\smallskip

By Remark \ref{remarkk-Roudff} it is enough to check Conjecture \ref{conjkRoudneff} only for the reorientation classes $[\M]$ of $\M$. So, in the case of LOMs it is sufficient  by Theorem \ref{chessboard=reo_classes} to choose a representative matrix of a chessboard to obtain a representative matrix of its corresponding reorientation class. Hence, the computer program (available at \cite{supplemental_data}) has to compute $2^{(n-r-1)(r-1)}$ different chirotopes to obtain $f_{\mathcal{M}_A}(r,n,k)$ for the case of LOMs by Theorem \ref{number-of-reorientations}.
%Notice that is enough by Theorem \ref{chessboard=reo_classes} to choose a representative matrix of a chessboard to obtain a representative matrix of its corresponding reorientation class. (\textcolor{red}{PONER EL LINK DEK PROGRAMA Y DECIR QUE EN ESE PROGRAMA YA SE HA ELEGIDO AL REPRESENTANTE}). Hence, the computer program has to compute $2^{(n-r-1)(r-1)}$ different chirotopes to obtain $f_{\mathcal{M}_A}(r,n,k)$ for the case of LOMs (see Theorem \ref{number-of-reorientations}).

%each reorientation class can be identified by choosing a matrix that in turn represents its corresponding chessboard board. 

%\begin{figure}[htb]
%\begin{center}
% \includegraphics[width=1\textwidth]{ejemplo7-11-2.eps}
%\caption{Two matrices that were analyzed, the blue entries are the entries that were fixed.} \label{ejemplo7-11}
%\end{center}
%\end{figure}

\subsubsection{$k$-Roudneff's conjecture for LOMs with $r=7$, $k=2$ and $n\geq 11$}
  
We first notice that \Cref{conjkRoudneff} was already proved for $r=7$, $k=2$ and $n=10$ in \cite[Theorem 4.1 (d)]{HKM24}. 
%it was proved that $f_{\M}(7,10,2)< c_{7}(10,2)$ for every $\M \notin [C_7(10)]$, so the case $n=10$ is done. 
 Now, in order to prove the conjecture for LOMs with $r=7$, $k=2$ and $n\geq 11$ it is 
  enough to prove only the case $n=11$ by Proposition  \ref{coro-LOMs} since for $r<7$ and $k=2$  \Cref{conjkRoudneff} was proved in \cite{HKM24}.
 % elements where $(r,n)\in \{(5,7), (6,9), (7,11)\}$. The case $r=5,n=7$ holds  since it is well known that there are only one reorientation class for uniform matroids of rank $r$ on $n\leq r+2$ elements. In \cite[Theorem 6.1 (c)]{HKM24}, it was proved that \Cref{conjkRoudneff} is true for $r=6$ and $n=9$, then it only remains to prove \Cref{conjkRoudneff} for $r=7$ and $n=11$, and in this case, by Theorem \ref{number-of-reorientations}, 
 As mentioned above, we have to analyze $2^{(n-r-1)(r-1)}=2^{18}=262$ $144$ different matrices. We implemented our computer program and we resume the results in the following theorem. 

\begin{theorem}\label{thm_computer7}
 Let $\mathcal{M}\not\in [C_7(11)]$ be a rank $7$ LOM on $11$ elements, then $f_{\M}(7,11,2)\leq c_{7}(11,2)=112$ and there are exactly 255 reorientation classes with $f_{\M}(7,11,2)= c_{7}(11,2)$.
\end{theorem}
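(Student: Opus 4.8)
The plan is to verify the statement by exhaustive computation over all reorientation classes of rank $7$ LOMs on $11$ elements. This is justified because, by Remark \ref{remarkk-Roudff}, it suffices to check \Cref{conjkRoudneff} on a single representative of each reorientation class, and for LOMs these classes are in bijection with chessboards by Theorem \ref{chessboard=reo_classes}. First I would pin down the target value: applying Equation (\ref{formula-ciclico}) with $r=7$, $n=11$, $k=2$ gives $r-1-2k=2$, hence $c_7(11,2)=2\sum_{i=0}^{2}\binom{10}{i}=2(1+10+45)=112$. Since $n=11=2(r-k)+1$ the formula is valid, so in particular $[C_7(11)]$ itself attains this maximum; the theorem asserts that exactly $255$ further classes also attain it.

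Next I would set up the enumeration. By Theorem \ref{number-of-reorientations} there are exactly $2^{(n-r-1)(r-1)}=2^{18}=262144$ reorientation classes, one for each coloring of the relevant squares $s(i,j)$ identified in Remark \ref{squares_involved}. For each coloring I would construct a canonical representative $(7\times 11)$-matrix $A$ exactly as in the proof of Theorem \ref{chessboard=reo_classes}: fix the first row and first column to $+1$, then determine each remaining relevant entry from its neighbors and the prescribed square color via Remark \ref{entry-determined}. The irrelevant entries may be assigned arbitrarily, since by Remark \ref{prop-lom}(i) they never enter Equation (\ref{bases-lom}). From $A$ I would compute the chirotope of $\M_A$ by $\chi(B)=\prod_{i=1}^{7}a_{i,j_i}$ over all $\binom{11}{7}=330$ bases, and then feed $\chi$ to the program of \cite{HKM24} (available at \cite{supplemental_data}). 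That program returns the $o$-vector, from which $f_{\M_A}(7,11,2)=\sum_{i=2}^{3}o_i$. For each class I would then compare this value with $112$, recording whether $f_{\M_A}(7,11,2)\le 112$ holds and, when it does, whether equality is attained while excluding $[C_7(11)]$. Summing over all colorings yields both the inequality for every class and the count of extremal classes.

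The main obstacle is computational feasibility and trustworthiness rather than conceptual difficulty. The chessboard bijection of Theorem \ref{chessboard=reo_classes} is precisely what makes the enumeration exact and non-redundant: each of the $2^{18}$ colorings is examined once and produces a distinct class, so no class is missed or double-counted. The genuine risks are therefore (i) the sheer scale of the search---$2^{18}$ classes, each requiring a full chirotope and an $o$-vector computation---and (ii) certifying that the reconstructed representative matrix truly realizes the prescribed chessboard, which the canonical filling above guarantees. The scale is manageable because one need only loop over the relevant squares and reuse the already-validated routine of \cite{HKM24}; if a speedup is needed, one can test $k$-neighborliness directly through the top-travel criterion of Lemma \ref{LOM_k_neigh} (equivalently via the plain-travel count $f_{\M}(r,n,k)=2|PT^k|$ of Lemma \ref{PT_and_reor_k} and Remark \ref{remark-2}) instead of scanning all $2^{11}$ sign reorientations, but this is an optimization and does not affect correctness.
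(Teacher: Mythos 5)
Your proposal matches the paper's approach: the theorem is established by exhaustively enumerating the $2^{(n-r-1)(r-1)}=2^{18}$ reorientation classes via their chessboards (Theorems \ref{chessboard=reo_classes} and \ref{number-of-reorientations}), computing each chirotope from Equation (\ref{bases-lom}), and evaluating $f_{\M}(7,11,2)$ with the program of \cite{HKM24}. Your arithmetic for $c_7(11,2)=112$ and the extraction $o_2+o_3$ from the $o$-vector are both consistent with what the paper does.
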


\begin{corollary}\label{2-roudneff}
    Let $\mathcal{M}$ be a LOM of rank $r= 7$ on $n\geq 11$ elements, then $$f_{\mathcal{M}}(7,n,2)\le c_7(n,2).$$
\end{corollary}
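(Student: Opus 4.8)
The plan is to obtain the corollary as a direct application of the inductive reduction in Proposition \ref{coro-LOMs} with $r=7$ and $k=2$. For these parameters the base value in that proposition is $n'=2(r'-k)+1$, so I first record that for $r'=7$ this equals $2(7-2)+1=11$, which is precisely the regime of Theorem \ref{thm_computer7}. Consequently the whole argument reduces to checking the bound $f_{\M'}(r',n',2)\le c_{r'}(n',2)$ at the single value $n'=2(r'-k)+1$ for every rank $r'\le 7$ LOM $\M'$, and then letting Proposition \ref{coro-LOMs} propagate the inequality to all $n\ge 11$.

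First I would dispose of the ranks $r'<7$. A LOM of rank $r'$ admits a $2$-neighborly reorientation only when $r'\ge 2k+1=5$; for $r'<5$ both $f_{\M'}(r',n',2)$ and $c_{r'}(n',2)$ vanish (the defining sum in Equation (\ref{formula-ciclico}) is empty), so the bound is trivial. The only nontrivial base cases below rank $7$ are therefore $r'\in\{5,6\}$. For $r'=5$ we have $n'=7$, the odd rank $r'=2k+1$, handled for all oriented matroids (hence for LOMs) in \cite[Corollary 3.17]{HKM24}; for $r'=6$ we have $n'=9$, covered by the verification of \Cref{conjkRoudneff} for $k=2$ and $r=6$ in \cite{HKM24}. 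This settles the hypothesis of Proposition \ref{coro-LOMs} for every $r'<7$.

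Next I would treat the remaining base case $r'=7$, $n'=11$. Theorem \ref{thm_computer7} gives $f_{\M}(7,11,2)\le c_7(11,2)$ for every rank $7$ LOM $\M\notin[C_7(11)]$, while for $\M\in[C_7(11)]$ the value equals $c_7(11,2)$ by definition of the alternating oriented matroid; hence the bound holds for \emph{every} rank $7$ LOM on $11$ elements. With all base cases $n'=2(r'-k)+1$, $r'\le 7$, now in hand, Proposition \ref{coro-LOMs} immediately yields $f_{\M}(7,n,2)\le c_7(n,2)$ for every rank $7$ LOM $\M$ on $n\ge 11$ elements, which is the assertion of the corollary.

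The only genuinely hard ingredient is the base case $r'=7$, $n'=11$, namely the computer-assisted Theorem \ref{thm_computer7} (which must enumerate the $2^{(11-7-1)(7-1)}=2^{18}$ chessboards supplied by Theorem \ref{number-of-reorientations}); granting that result, the corollary itself is a routine invocation of the contraction--deletion induction already packaged in Proposition \ref{coro-LOMs}, whose bookkeeping rests on the additive identity $c_r(n-1,k)+c_{r-1}(n-1,k)=c_r(n,k)$ together with the closure of LOMs under deletion and contraction (Remark \ref{closedLOMs}).
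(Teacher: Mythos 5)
Your proposal is correct and follows essentially the same route as the paper: reduce via Proposition \ref{coro-LOMs} to the base cases $n'=2(r'-k)+1$ for $r'\le 7$, settle the ranks $r'<7$ by the known results of \cite{HKM24}, and use the computer-assisted Theorem \ref{thm_computer7} for the only remaining case $(r,n,k)=(7,11,2)$. Your extra bookkeeping for $r'<5$ and the explicit inclusion of the class $[C_7(11)]$ are fine and merely make explicit what the paper leaves implicit.
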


On the other hand, notice that \Cref{conjkRoudneff} holds for $r=7$, $k=3$ since it was already proved for any oriented matroid in \cite[Theorem 3.17]{HKM24}.

\subsubsection{$k$-Roudneff's conjecture for LOMs with $r=8$, $k=2,3$ and $n=11,12$}

By Theorem \ref{number-of-reorientations} we have to analyze $2^{14}=16$ $384$ and $2^{21}=2$ $097$ $152$ different matrices for the cases $n=11$ and $n=12$, respectively. The results are presented in the following theorem. 

\begin{theorem}\label{thm_computer8}
 Let $\mathcal{M}\not\in [C_8(n)]$ be a rank $8$ LOM on $n$ elements, then the following hold:%\textcolor{red}{(cual es la buena: el (a) o el (b)?)}
 \begin{itemize}
    \item [(a)]  If $n=11$, then $f_{\M}(8,11,2)\leq c_{8}(11,2)$ and there are exactly 255 reorientation classes with $f_{\M}(8,11,2)= c_{8}(11,2)$.
     \item [(b)]  If $n=11$, then $f_{\M}(8,11,3)\leq c_{8}(11,3)$ and there are exactly 251 reorientation classes with $f_{\M}(8,11,3)= c_{8}(11,3)$.
    \item [(c)]  If $n=12$, then $f_{\M}(8,12,2)\leq c_{8}(12,2)$ and there are exactly 511 reorientation classes with $f_{\M}(8,12,2)= c_{8}(12,2)$.

    \item [(d)] If $n=12$, then $f_{\M}(8,12,3)\leq c_{8}(12,3)$ and there are exactly 511 reorientation classes with $f_{\M}(8,12,3)= c_{8}(12,3)$.
    
    %\textcolor{red}{ If $n=12$ and $k=3$ ? Se podría hacer no?}
 \end{itemize}
\end{theorem}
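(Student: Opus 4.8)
The plan is to settle all four parts by an exhaustive computer search over the reorientation classes of rank $8$ LOMs, exactly along the lines set up in Subsection \ref{Chess-subsection} and the computer-program discussion that follows it; the mathematical content lies entirely in the reductions already established, so the argument is a verification rather than a fresh analysis of $f_\M$.

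First I would use Remark \ref{remarkk-Roudff} to reduce the problem to checking a single representative per reorientation class. By Theorem \ref{chessboard=reo_classes} these classes correspond bijectively to chessboards, and by Theorem \ref{number-of-reorientations} their number is $2^{(n-r-1)(r-1)}$, which equals $2^{14}=16384$ for $n=11$ and $2^{21}=2097152$ for $n=12$. For each chessboard I would construct a representative $(8\times n)$-matrix $A$ by fixing a canonical choice of boundary entries and then filling the remaining entries as forced by Remark \ref{entry-determined}, following the procedure in the proof of Theorem \ref{chessboard=reo_classes}.

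Next, for each $A$ I would compute $f_{\M_A}(8,n,k)$ for $k=2,3$. The direct route uses Equation (\ref{bases-lom}) to produce the chirotope $\chi(B)=\prod_{i=1}^{8}a_{i,j_i}$ of $\M_A$ and feeds it to the program of \cite{HKM24} (available at \cite{supplemental_data}), which returns the $o$-vector; one then recovers $f_{\M_A}(8,n,k)=\sum_{i=k}^{\lfloor (r-1)/2\rfloor}o_i$. Alternatively, exploiting the LOM structure, one may compute $f_{\M_A}(8,n,k)=2|PT^k|$ directly from the $k$-neighborly plain travels of $A$ by Lemma \ref{PT_and_reor_k} and Remark \ref{remark-2}. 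In either case I would compare the output with the closed-form value $c_8(n,k)=2\sum_{i=0}^{7-2k}\binom{n-1}{i}$ from Equation (\ref{formula-ciclico}), verify $f_{\M_A}(8,n,k)\le c_8(n,k)$ for every class with $\M_A\notin[C_8(n)]$, and tally the classes attaining equality to recover the counts $255$, $251$, $511$ and $511$.

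I expect the only real obstacle to be computational scale rather than mathematics: the case $n=12$ sweeps through roughly $2\times10^{6}$ classes, and for each the naive computation of the full $o$-vector involves testing neighborliness across many reorientations. The plain-travel reformulation $f_{\M_A}=2|PT^k|$ should be the key to keeping the search tractable, as it replaces the enumeration of reorientations by a lighter combinatorial walk over the entries of $A$. Correctness of the reported counts then rests only on the completeness of the chessboard enumeration (Theorem \ref{number-of-reorientations}) and on the correctness of the inherited program from \cite{HKM24}.
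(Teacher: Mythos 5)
Your proposal matches the paper's own treatment: the paper likewise reduces to one representative per chessboard via Remark \ref{remarkk-Roudff} and Theorems \ref{chessboard=reo_classes} and \ref{number-of-reorientations}, enumerates the $2^{14}$ (resp.\ $2^{21}$) matrices, and computes each $f_{\M_A}(8,n,k)$ from the chirotope given by Equation (\ref{bases-lom}) using the program of \cite{HKM24}. The approach and the counts are the same, so there is nothing further to add.
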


On the other hand, we notice that the conjecture holds for LOMs with $r=8$, $k=3$ and $n\ge 20$ by Theorem \ref{rparkmax}. We also observe that in order to prove the conjecture for LOMs with $r=8$, $k=2$ and $n\ge 13$ (using Proposition \ref{coro-LOMs}) the computer program has to calculate $2^{28}=268$ $435$ $456$ different matrices by Theorem \ref{number-of-reorientations}.

%On the other hand, we notice that \Cref{conjkRoudneff} holds for LOMs with $r=8$, $k=3$ and $n\ge 20$ by Theorem \ref{rparkmax}.

\subsubsection{$k$-Roudneff's conjecture for LOMs with $r=9$, $k=3$ and $n\geq 12$ and for $r=9$, $k=2$ and $n=12$.}

%By Corollary \ref{coro-LOMs} it is enough to prove it for $r=9$ and $n=12$ and $13$ since for $r<9$  \Cref{conjkRoudneff} where proved in \cite{HKM24}.

To prove \Cref{conjkRoudneff} for LOMs with $r=9$, $k=3$ and $n\geq 13$,  it is enough by Proposition \ref{coro-LOMs} to prove it for LOMs with  $(r,n,k)\in \{(7,9,3),(8,11,3),(9,13,3)\}$ (notice that for $k=3$, $r\ge 2k+1=7$). So, the case $(7,9,3)$ is the case $n=r+2$ and holds by Remark \ref{remarkk-Roudff} and the case $(8,11,3)$ was proved in Theorem \ref{thm_computer8} (b). For the cases $r=9$ and $n=12$ and $13$, we have to analyze $2^{16}= 65$ $536$ and $2^{24}=16$ $777$ $216$ different matrices  by Theorem \ref{number-of-reorientations}, respectively. %We could also verify \Cref{conjkRoudneff} for $r=9$, $k=2$ and $n=12$.
We implemented our computer program and we summarize the results in the following theorem. 

\begin{theorem}\label{thm_computer9}
 Let $\mathcal{M}\not\in [C_9(n)]$ be a rank $9$ LOM on $n$ elements, then the following hold:
 \begin{itemize}

\item [(a)]  If $n=12$, then $f_{\M}(9,12,2)\leq c_{9}(12,2)$ and there are exactly 511 reorientation classes with $f_{\M}(9,12,2)= c_{9}(12,2)$.
    
    \item [(b)]  If $n=12$, then $f_{\M}(9,12,3)\leq c_{9}(12,3)$ and there are exactly 511 reorientation classes with $f_{\M}(9,12,3)= c_{9}(12,3)$.
    
    \item [(c)]  If $n=13$, then $f_{\M}(9,13,3)\leq c_{9}(13,3)$ and there are exactly 1023 reorientation classes with $f_{\M}(9,13,3)= c_{9}(13,3)$.
 \end{itemize}
\end{theorem}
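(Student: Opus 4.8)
The plan is to establish the three cases by exhaustive computer enumeration over reorientation classes, in the same spirit as the lower-rank verifications already recorded. By Remark \ref{remarkk-Roudff} it suffices to inspect one representative of each reorientation class of rank $9$ LOMs, and by Theorem \ref{chessboard=reo_classes} these classes are in bijection with chessboards; Theorem \ref{number-of-reorientations} then fixes their number at $2^{(n-r-1)(r-1)}$, i.e.\ $2^{16}=65\,536$ classes when $n=12$ and $2^{24}=16\,777\,216$ classes when $n=13$. First I would generate, for every admissible chessboard, a canonical representative $(9\times n)$-matrix $A$ with entries in $\{+1,-1\}$: normalising the first row and first column to $+1$ and then reading each remaining entry off the colour of the appropriate square via Remark \ref{entry-determined} produces exactly one matrix per class, so that every reorientation class is visited once and only once.

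For each representative $A$ I would compute $f_{\mathcal{M}_A}(r,n,k)$. One route is to obtain the chirotope of $\mathcal{M}_A$ entrywise from $A$ through Equation (\ref{bases-lom}) and feed it to the program of \cite{HKM24} (available at \cite{supplemental_data}), which returns the full $o$-vector, whence $f_{\mathcal{M}_A}(r,n,k)=\sum_{i\ge k}o_i$. A faster, LOM-specific alternative that I would favour for $n=13$ is to count plain travels directly: by Lemma \ref{PT_and_reor_k} and Remark \ref{remark-2} we have $f_{\mathcal{M}_A}(r,n,k)=2\,|PT^k|$, so it suffices to enumerate the plain travels of $A$ and, using the Top Travel criterion of Lemma \ref{LOM_k_neigh}, test each for $k$-neighborliness by checking that no reorientation of at most $k$ columns renders its top travel positive.

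Next I would pin down the target values $c_r(n,k)$. For $(9,13,3)$ we have $n=13=2(r-k)+1$, so Equation (\ref{formula-ciclico}) applies and yields $c_9(13,3)=2\sum_{i=0}^{2}\binom{12}{i}=158$. For $(9,12,2)$ and $(9,12,3)$ we are in the range $n<2(r-k)+1$, where the closed formula is not yet established, so instead I would compute $c_9(12,k)=f_{C_9(12)}(9,12,k)$ directly by applying the same program to the alternating matroid $C_9(12)=\mathcal{M}_A$ with $A$ all-positive. Running over all classes I would then (i) confirm the inequality $f_{\mathcal{M}_A}(r,n,k)\le c_r(n,k)$ for every class $\mathcal{M}_A\notin[C_9(n)]$, and (ii) tally the classes attaining equality, expecting the counts $511$, $511$ and $1023$ asserted in the statement, while verifying that $[C_9(n)]$ itself always sits among the maximisers.

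The main obstacle is purely computational: the $n=13$ instance has $2^{24}\approx1.7\times10^{7}$ classes, each \emph{a priori} demanding either an $o$-vector computation or a complete plain-travel count, which is at the edge of comfortable feasibility. I expect the decisive efficiency gain to come from the plain-travel formulation combined with aggressive early termination—discarding a plain travel as soon as at most $k$ columns are exhibited whose reorientation makes its top travel positive, exactly the mechanism exploited in Lemma \ref{escalera} and Lemma \ref{PTjk}—together with parallelisation over the independent $2^{24}$ chessboards. A secondary concern is the \emph{correctness} of the enumeration rather than its cost: I would guard against double-counting or omissions by checking the total number of visited classes against $2^{(n-r-1)(r-1)}$, and as an independent sanity check re-derive $c_9(13,3)=158$ from the formula and match it against the program's output on $C_9(13)$. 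The verified base cases then feed, via Proposition \ref{coro-LOMs}, into the downstream corollaries for all larger $n$.
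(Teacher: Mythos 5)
Your proposal matches the paper's method: the authors likewise enumerate one representative matrix per chessboard (hence per reorientation class, $2^{16}$ for $n=12$ and $2^{24}$ for $n=13$ by Theorem \ref{number-of-reorientations}), extract each chirotope via Equation (\ref{bases-lom}), and run the $o$-vector program of \cite{HKM24} to obtain $f_{\mathcal{M}_A}(9,n,k)$ and tally the maximisers. Your plain-travel shortcut and the explicit cross-checks on $c_9(13,3)=158$ are sensible additions but do not change the argument.
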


\begin{corollary}\label{3-roudneff}
    Let $\mathcal{M}$ be a LOM of rank $r= 9$ on $n\geq 12$ elements, then $$f_{\mathcal{M}}(9,n,3)\le c_9(n,3).$$
\end{corollary}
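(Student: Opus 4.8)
The plan is to split the range $n\ge 12$ into the single value $n=12$ and the tail $n\ge 13$, since these two regimes require different tools. For $n=12$ the inequality lies below the threshold $2(r-k)+1=13$ of Proposition \ref{coro-LOMs}, so the recursive reduction cannot even be started there; instead I would invoke the direct computer verification recorded in Theorem \ref{thm_computer9} (b), which states precisely that $f_{\M}(9,12,3)\le c_9(12,3)$ for every rank $9$ LOM $\M\notin[C_9(12)]$ on $12$ elements, while the alternating matroid attains equality by the very definition $c_9(12,3)=f_{C_9(12)}(9,12,3)$. This disposes of the boundary value.

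For the tail $n\ge 13$ I would apply the LOM-specific reduction of Proposition \ref{coro-LOMs} with $r=9$ and $k=3$. That proposition reduces the entire range $n\ge 2(r-k)+1=13$ to verifying the base inequality $f_{\M'}(r',n',3)\le c_{r'}(n',3)$ only for LOMs $\M'$ of rank $r'\le 9$ on $n'=2(r'-k)+1$ elements. Because $k=3$ forces $r'\ge 2k+1=7$, the only surviving base cases are $(r',n',k)\in\{(7,9,3),(8,11,3),(9,13,3)\}$, which I would then dispatch one at a time: the case $(7,9,3)$ has $n'=r'+2$ and so holds by Remark \ref{remarkk-Roudff}; the case $(8,11,3)$ is exactly Theorem \ref{thm_computer8} (b); and the case $(9,13,3)$ is exactly Theorem \ref{thm_computer9} (c). With all three base cases established, Proposition \ref{coro-LOMs} delivers $f_{\M}(9,n,3)\le c_9(n,3)$ for every rank $9$ LOM on $n\ge 13$ elements, and combining this with the $n=12$ case from the previous paragraph completes the argument.

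The genuine content of the statement is therefore concentrated entirely in the computer-assisted inputs rather than in the assembly itself: Theorem \ref{thm_computer9} (b) supplies $n=12$, and Theorem \ref{thm_computer9} (c) together with Theorem \ref{thm_computer8} (b) supply the base of the recursion. Consequently the main obstacle is not conceptual but the sheer feasibility of those exhaustive enumerations. By Theorem \ref{number-of-reorientations} one must examine $2^{(n-r-1)(r-1)}$ reorientation classes, namely $2^{16}=65\,536$ matrices for $(9,12)$ and $2^{24}=16\,777\,216$ matrices for $(9,13)$, computing $f_{\M}$ from the chirotope (via Equation (\ref{bases-lom})) for one representative matrix of each chessboard class. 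Once those finite verifications are accepted, the corollary follows by the purely formal telescoping $c_r(n-1,k)+c_{r-1}(n-1,k)=c_r(n,k)$ built into Proposition \ref{coro-LOMs}.
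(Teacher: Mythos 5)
Your proposal is correct and follows exactly the paper's own route: the boundary case $n=12$ is covered by the computer verification in Theorem \ref{thm_computer9}~(b), and the tail $n\ge 13$ is obtained from Proposition \ref{coro-LOMs} with the base cases $(7,9,3)$ (Remark \ref{remarkk-Roudff}), $(8,11,3)$ (Theorem \ref{thm_computer8}~(b)) and $(9,13,3)$ (Theorem \ref{thm_computer9}~(c)). Nothing is missing.
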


%OBSERVACION: El caso $r=9,k=2,n=13$ NO LO TENEMOS. El problema es que se tendría que volver a calcular los 16 millones. Para k=3, a la compu le cuesta más que con k=2:
%Porque en una parte del código para cada candidato a tope T el programa empieza a calcular min{H(X,T), S(X,T)} para cada circuito X, entonces:
%Cuando k=3 le pido al programa que cuando encuentre un cicuito X tal que min{H(X,T), S(X,T)}<4, rompa el bucle( i.e  que ya no siga calculando más min{H(X,T), S(X,T)}) e imediatamente descarta ese tope T como un tope 3-neighborly (y así se ahorra tiempo).
%Entonces ahora como k=2, evidentemente hay más  topes 2-neighborly que 3-neighborly, y por lo tanto  ahora descartará menos topes (se romperán menos bucles que cuando k=3) y tardará un poco más.

On the other hand, notice that \Cref{conjkRoudneff} holds for $r=9,k=4$ since it was already proved for any oriented matroid in \cite[Theorem 3.17]{HKM24}.  Finally, we observe that in order to prove the conjecture for LOMs with $r=9$ $k=2$ and $n\ge 15$ (using Proposition \ref{coro-LOMs}) the computer program has to calculate $2^{40}$ different matrices by Theorem \ref{number-of-reorientations}.

\section{$k$-Roudneff's conjecture holds asymptotically}%when $n\rightarrow \infty$
\label{asymptotics}

The main result of this section is to prove $k$-Roudneff's conjecture asymptotically for fixed $k\ge 1$ and $r\ge 2k+2\ge 4$, when $n\rightarrow \infty$. Denote by $f(r,n,k)$ the maximum number of $k$-neighborly reorientations over all rank $r$ oriented matroids on $n$ elements. By Lemma \ref{thm:onlythebaseishard}, if $\M$ is such that $f_{\M}(r,n,k)=f(r,n,k)$, then it follows that $f(r,n,k) \le f_{\M/ e}(r-1,n-1,k)+f_{\M\setminus e}(r,n-1,k)\le f(r-1,n-1,k)+f(r,n-1,k)$. The following recursive upper bound will be very useful in this section. 

\begin{corollary}\label{recursive_formula}
    $f(r,n,k) \le f(r-1,n-1,k)+f(r,n-1,k).$
\end{corollary}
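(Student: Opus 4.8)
The plan is to read this off directly from Lemma \ref{thm:onlythebaseishard} together with the definition of $f(r,n,k)$ as a maximum, formalizing the computation already sketched in the paragraph preceding the statement. First I would choose an oriented matroid $\M$ on $n$ elements of rank $r\ge 2k+1$ that attains the maximum, i.e.\ $f_{\M}(r,n,k)=f(r,n,k)$; such an $\M$ exists because there are only finitely many oriented matroids on a fixed ground set. Moreover, by the perturbation fact recalled in \Cref{intro} (every rank $r$ oriented matroid satisfies $f_{\M}(r,n,k)\le f_{\M'}(r,n,k)$ for some uniform $\M'$ of the same rank and size), the maximum is already attained on uniform matroids, so I may assume this extremal $\M$ is uniform. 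This is exactly the hypothesis required to apply Lemma \ref{thm:onlythebaseishard}.

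Next I would fix any element $e\in E$ and apply Lemma \ref{thm:onlythebaseishard} to obtain
\[
f(r,n,k)=f_{\M}(r,n,k)\le f_{\M/e}(r-1,n-1,k)+f_{\M\setminus e}(r,n-1,k).
\]
Here $\M/e$ is an oriented matroid of rank $r-1$ on $n-1$ elements, and $\M\setminus e$ is an oriented matroid of rank $r$ on $n-1$ elements (the deletion preserves the rank because $n-1\ge r$, as the deletion of a uniform rank $r$ matroid has rank $\min\{n-1,r\}=r$, so $e$ is not a coloop). By the very definition of $f$ as the maximum over all oriented matroids of the prescribed rank and size, one has $f_{\M/e}(r-1,n-1,k)\le f(r-1,n-1,k)$ and $f_{\M\setminus e}(r,n-1,k)\le f(r,n-1,k)$. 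Substituting these two bounds into the displayed inequality yields
\[
f(r,n,k)\le f(r-1,n-1,k)+f(r,n-1,k),
\]
which is the claimed recursion.

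Since the argument is a direct combination of Lemma \ref{thm:onlythebaseishard} with the definition of the extremal quantity, I do not expect any substantial obstacle here; the statement is genuinely a corollary. The only points requiring care are bookkeeping ones: verifying that the maximum is attained and may be realized by a \emph{uniform} matroid (so that the lemma is applicable), and checking that the deletion $\M\setminus e$ still has rank $r$. Both are immediate from the facts recalled in \Cref{intro} and hold in the regime $n\ge r+1$ relevant to the asymptotic analysis of this section.
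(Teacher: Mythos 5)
Your proof is correct and follows essentially the same route as the paper, which derives the corollary in the paragraph immediately preceding it by applying Lemma \ref{thm:onlythebaseishard} to an extremal $\M$ and then bounding each summand by the corresponding maximum. Your additional care in justifying that the extremal matroid may be taken uniform (via the perturbation fact) and that $\M\setminus e$ retains rank $r$ fills in details the paper leaves implicit, but does not change the argument.
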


For $n\ge r\geq 2k+2\ge 4$, we define $$F(r,n,k)=\dfrac{2\Biggl((n-r)+\dbinom{r}{k+1}+2^{r-1}\Biggl)}{(r-1-2k)!}^{r-1-2k}$$

We will prove in Theorem \ref{thm_asymptotics} that $f(r,n,k) \le F(r,n,k)$ for every $r\ge 2k+2\ge 4$ and $n\ge r+1\ge 5$ but first, we need some previous results. $k$-Roudneff's conjecture was proved for $r = 2k + 1\ge3$ and $n \ge r+2$ in \cite[Corollary 3.17]{HKM24}. On the other hand, the value of $c_r(n,k)$ was computed for $n\ge r+1$ when $r=2k+1$ in \cite[Theorem 3.11 and Proposition 3.12]{HKM24}. Combining these results and Remark \ref{remarkk-Roudff} we obtain the following result that will be  useful in this section.

\begin{theorem}\cite{HKM24}\label{r_odd_max_ort}
  Let $\mathcal{M}$ be a rank $r=2k+1\ge 3$ oriented matroid on $n\ge r+1$ elements, %$k=\frac{r-1}{2}$. 
  then  
  \begin{equation*}
     f_{\mathcal{M}}(r,n,k)\le\left\{
	       \begin{array}{ll}
		 c_r(n,k)=2       & \text{ if    $n\ge r+2$; }  \\ \ \\
		  c_r(n,k)={r+1 \choose k+1}      & \text{ if    $n=r+1$. }\\

	       \end{array}
	     \right.
\end{equation*}
\end{theorem}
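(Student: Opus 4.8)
The plan is to treat the statement as the assembly of two separate tasks: first, establishing the inequality $f_{\mathcal{M}}(r,n,k)\le c_r(n,k)$ in the maximally-neighborly regime $r=2k+1$, and second, evaluating $c_r(n,k)$ explicitly in each of the two ranges $n\ge r+2$ and $n=r+1$. Neither part requires a genuinely new argument: both follow by specializing results already recorded in the excerpt, so essentially all the work lies in checking that the ranges of validity line up and in handling the boundary value $n=r+1$, which falls outside the closed formula (\ref{formula-ciclico}).

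For the inequality I would split according to $n$. When $n\ge r+2$, the bound $f_{\mathcal{M}}(r,n,k)\le c_r(n,k)$ is precisely the case $r=2k+1$ of $k$-Roudneff's conjecture, i.e.\ \cite[Corollary 3.17]{HKM24}. When $n=r+1$, I would instead invoke Remark \ref{remarkk-Roudff}: since $r+1\le r+2$, Conjecture \ref{conjkRoudneff} holds for every rank $r$ oriented matroid on $n=r+1$ elements. Concretely, after perturbing $\mathcal{M}$ to a uniform matroid with at least as many $k$-neighborly reorientations, the uniqueness of the reorientation class of uniform rank $r$ oriented matroids on $n\le r+2$ elements places it in $[C_r(n)]$, so its count equals $c_r(n,k)$. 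Together these two cases cover all $n\ge r+1$.

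For the evaluation of $c_r(n,k)$, note that $r=2k+1$ forces $r-1-2k=0$. When $n\ge r+2=2(r-k)+1$, Equation (\ref{formula-ciclico}) applies and collapses to $c_r(n,k)=2\binom{n-1}{0}=2$. When $n=r+1$ the hypothesis $n\ge 2(r-k)+1$ of (\ref{formula-ciclico}) fails, so I would compute directly: the alternating matroid $C_r(r+1)$ has a single circuit, of full support of size $r+1=2k+2$, and a reorientation $_{-R}C_r(r+1)$ is $k$-neighborly exactly when the reoriented circuit $X$ satisfies $|X^+|>k$ and $|X^-|>k$. Since $|X^+|+|X^-|=2k+2$, both inequalities hold if and only if $|X^+|=k+1$; and because the ground set coincides with the support, the map $R\mapsto {}_{-R}X^{+}$ is a bijection onto subsets of the support. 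Hence there are exactly $\binom{2k+2}{k+1}=\binom{r+1}{k+1}$ such reorientations, which is the value recorded in \cite[Theorem 3.11 and Proposition 3.12]{HKM24}.

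The only delicate point is the boundary case $n=r+1$. It lies outside the range $n\ge 2(r-k)+1$ where the closed formula (\ref{formula-ciclico}) is valid, so both halves of the argument must be sourced differently there: the value of $c_r(r+1,k)$ comes from the direct circuit count above (equivalently, from Proposition 3.12), while the matching upper bound comes from Remark \ref{remarkk-Roudff} rather than from Corollary 3.17. Once these two edge cases are aligned with the generic case $n\ge r+2$, the two-line statement follows immediately, and I do not anticipate any further obstacle since no step beyond bookkeeping of the cited results is involved.
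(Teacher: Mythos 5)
Your proposal is correct and follows essentially the same route as the paper, which likewise obtains the bound by combining \cite[Corollary 3.17]{HKM24} for $n\ge r+2$, Remark \ref{remarkk-Roudff} for the boundary case $n=r+1$, and \cite[Theorem 3.11 and Proposition 3.12]{HKM24} for the values of $c_r(n,k)$. Your direct circuit count giving $c_r(r+1,k)=\binom{r+1}{k+1}$ is a correct (and welcome) verification of what the paper simply cites.
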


The next observation can be deduced from the binomial theorem.
\begin{remark}\label{ref1}
Given positive integes $x$ and $y$ it holds that $\dfrac{x^{y-1}}{(y-1)!}+\dfrac{x^y}{y!}< \dfrac{(x+2)^y}{y!}$
\end{remark}

Next, we obtain the following recursive lower bound of $F(r,n,k)$ that will be useful in order to prove Theorem \ref{thm_asymptotics}.

\begin{remark}\label{desigualdad F}
Let $r\ge 2k+2\ge 4$ and $n\geq r+1\ge 5$, then
$$F(r-1,n-1,k)+ F(r,n-1,k)< F(r,n,k)$$
\end{remark}

\begin{proof}
{\tiny
$$
F(r-1,n-1,k)+ F(r,n-1,k)
\begin{array}[t]{l}
=\displaystyle  \dfrac{2\Biggl((n-r)+ \dbinom{r-1}{k+1}+2^{r-2}\Biggl)}{(r-2-2k)!}^{r-2-2k} + \dfrac{2\Biggl((n-r-1)+ \dbinom{r}{k+1}+2^{r-1}\Biggl)}{(r-1-2k)!}^{r-1-2k}\\ \ \\
<\displaystyle  \dfrac{2\Biggl((n-r-1)+ \dbinom{r}{k+1}+2^{r-1}\Biggl)}{(r-2-2k)!}^{r-2-2k} + \dfrac{2\Biggl((n-r-1)+ \dbinom{r}{k+1}+2^{r-1}\Biggl)}{(r-1-2k)!}^{r-1-2k}\\ \ \\
<\displaystyle  \dfrac{2\Biggl((n-r)+ \dbinom{r}{k+1}+2^{r-1}\Biggl)}{(r-1-2k)!}^{r-1-2k}=F(n,r,k)
\end{array}
$$
}

%$$
%F(r-1,n-1,k)+ F(r,n-1,k)
%\begin{array}[t]{l}
%=\displaystyle  \Biggl(\dfrac{2(n-r)+ \dbinom{r-1}{k+1}+2^{r-2}}{(r-2-2k)!}\Biggr)^{r-2-2k}\\ \ \\
%+ \Biggl(\dfrac{2(n-r-1)+ \dbinom{r}{k+1}+2^{r-1}}%{(r-1-2k)!}\Biggr)^{r-1-2k}\\ \ \\
%<\displaystyle  \Biggl(\dfrac{2(n-r-1)+ \dbinom{r}{k+1}}{(r-2-2k)!}\Biggr)^{r-2-2k}\\ \ \\  + \Biggl(\dfrac{2(n-r-1)+ \dbinom{r}{k+1}+2^{r-1}}{(r-1-2k)!}\Biggr)^{r-1-2k}\\ \ \\
%<\displaystyle  \Biggl(\dfrac{2(n-r)+ \dbinom{r}{k+1}+2^{r-1}}{(r-1-2k)!}\Biggr)^{r-1-2k}=F(n,r,k)
%\end{array}
%$$

%\begin{align*}
%F(r-1,n-1,k)+ F(r,n-1,k) = & \displaystyle  \Biggl(\dfrac{2(n-r)+ \dbinom{r-1}{k+1}+2^{r-2}}{(r-2-2k)!}\Biggr)^{r-2-2k} \\
%  & + \Biggl(\dfrac{2(n-r-1)+ \dbinom{r}{k+1}+2^{r-1}}{(r-1-2k)!}\Biggr)^{r-1-2k}\\
%  <& \displaystyle  \Biggl(\dfrac{2(n-r-1)+ \dbinom{r}{k+1}}{(r-2-2k)!}\Biggr)^{r-2-2k}\\
%  & + \Biggl(\dfrac{2(n-r-1)+ \dbinom{r}{k+1}+2^{r-1}}{(r-1-2k)!}\Biggr)^{r-1-2k}\\
%  <& \displaystyle  \Biggl(\dfrac{2(n-r)+ \dbinom{r}{k+1}+2^{r-1}}{(r-1-2k)!}\Biggr)^{r-1-2k}=F(n,r,k)
%\end{align*}
where the first inequality holds since ${r-1\choose k+1}<{r\choose k+1}$ for $r\ge 2k+2\ge 4$ and the last inequality follows by Remark \ref{ref1}.
\end{proof}

The value of  $f(r,r,k)$ makes sense if we consider the oriented matroid as an arrangement. As we pointed out in the introduction (Subsection \ref{sub-roudneff}), an oriented matroid on $n$ elements of rank $r$ is naturally associated with some arrangement of $n$ (pseudo) hyperplanes in the projective space of dimension $d=r-1$. The arrangement decomposes the projective space  into $d$-cells which in turn correspond to the acyclic reorientations (i.e., $0$-neighborly reorientations) of the oriented matroid (We refer the reader to \cite{FL78} to more details on the Topological Representation Theorem). In this sense, it is well-known  that $f(r,r,0)\le 2^{r-1}$   (see \cite{Cor80}). %cuando es uniforme, se da la igualdad
%and so, $f(r,r,k)\le 2^{r-1}$.

\begin{remark}\label{n=r}
    Let $r\ge 2k+2\ge 4$, then
    $f(r,r,k)\le 2^{r-1}< F(r,r,k)$.
\end{remark}

\begin{proof}
First notice that $f(n,r,k)\le f(n,r,k')$ for $k'\le k$. Second, as we mention above, $f(r,r,0)\le 2^{r-1}$. Hence, we obtain that $f(r,r,k)\le 2^{r-1}$. The second inequality holds directly by definition of the function $F$.
\end{proof}

\begin{remark} \label{2k+2_2k+3}
Let $k\ge 1$, then
    $f(2k+2,2k+3,k) < F(2k+2,2k+3,k).$
\end{remark}
\begin{proof}
By Corollary \ref{recursive_formula}, we have that  $f(2k+2,2k+3,k) \le f(2k+1,2k+2,k)+f(2k+2,2k+2,k)\le {2k+2 \choose k+1}+ 2^{2k+1}$, where the last inequality holds by Theorem \ref{r_odd_max_ort} and Remark \ref{n=r}. Thus, the result follows since $F(2k+2,2k+3,k)=2+2{2k+2 \choose k+1}+2^{2k+2}.$ 
\end{proof}

The next two lemmas prove the induction basis applying in the proof of Theorem \ref{thm_asymptotics}.

\begin{lemma} \label{r+1}
Let $r\ge 2k+2\ge 4$, then
    $f(r,r+1,k)< F(r,r+1,k).$
\end{lemma}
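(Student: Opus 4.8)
The plan is to prove the bound $f(r,r+1,k) < F(r,r+1,k)$ by induction on $r$, treating the quantity $f(r,r+1,k)$ as the number of $k$-neighborly reorientations of a rank-$r$ oriented matroid on exactly $r+1$ elements, which is the smallest nontrivial size above $r$. Since the excerpt has already set up the recursive upper bound in \Cref{recursive_formula}, namely $f(r,n,k)\le f(r-1,n-1,k)+f(r,n-1,k)$, the natural first step is to apply it with $n=r+1$ to get
\begin{equation*}
f(r,r+1,k)\le f(r-1,r,k)+f(r,r,k).
\end{equation*}
This decomposes the problem into two terms whose values are already controlled by the preceding remarks: the second term $f(r,r,k)$ is bounded by $2^{r-1}$ via \Cref{n=r}, and the first term $f(r-1,r,k)$ is a quantity of the form $f(r',r'+1,k)$ with $r'=r-1$, which is exactly the inductive hypothesis one rank lower (unless $r-1$ falls below $2k+2$, which is the base-case boundary handled separately).

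The induction basis is precisely what the two preceding lemmas/remarks are designed to supply. The smallest relevant rank is $r=2k+2$, for which \Cref{2k+2_2k+3} already gives $f(2k+2,2k+3,k)<F(2k+2,2k+3,k)$. So I would take $r=2k+2$ as the base case and, for the inductive step, assume $f(r-1,r,k)<F(r-1,r,k)$ holds for some $r\ge 2k+3$. There is a subtlety: when $r=2k+2$ the term $f(r-1,r,k)=f(2k+1,2k+2,k)$ sits at the odd rank $r-1=2k+1$, so it is governed not by the inductive hypothesis but by \Cref{r_odd_max_ort}, which gives $f(2k+1,2k+2,k)\le\binom{2k+2}{k+1}$. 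This is exactly how \Cref{2k+2_2k+3} is proved, so the base case is genuinely separate from the generic step; I would make sure to state that the induction runs for $r\ge 2k+3$ with base $r=2k+2$ already established.

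For the inductive step itself, I would combine the pieces:
\begin{equation*}
f(r,r+1,k)\le f(r-1,r,k)+f(r,r,k)< F(r-1,r,k)+2^{r-1},
\end{equation*}
using the inductive hypothesis on the first summand and \Cref{n=r} on the second. The remaining task is purely a comparison of the function $F$ against itself: I would need $F(r-1,r,k)+2^{r-1}\le F(r,r+1,k)$, or more cleanly, I would invoke \Cref{desigualdad F} with $n=r+1$, which already asserts $F(r-1,r,k)+F(r,r,k)<F(r,r+1,k)$, together with the fact that $2^{r-1}<F(r,r,k)$ from \Cref{n=r}. Chaining these gives $f(r,r+1,k)<F(r,r+1,k)$, closing the induction.

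The main obstacle I anticipate is not conceptual but bookkeeping: verifying that the indices stay in the admissible range $r\ge 2k+2$ throughout the recursion, and correctly routing the odd-rank term $f(2k+1,\cdot,k)$ through \Cref{r_odd_max_ort} rather than the inductive hypothesis at the base. Once the base case is correctly isolated at $r=2k+2$, the inductive step reduces entirely to the inequality $F(r-1,r,k)+F(r,r,k)<F(r,r+1,k)$ of \Cref{desigualdad F} plus the crude bound $2^{r-1}<F(r,r,k)$, so no new estimate on $F$ needs to be established — the real content has already been front-loaded into \Cref{desigualdad F}, \Cref{n=r}, and \Cref{2k+2_2k+3}.
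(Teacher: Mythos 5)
Your proposal is correct and follows essentially the same route as the paper: induction on $r$ with base case $r=2k+2$ supplied by \Cref{2k+2_2k+3}, the recursive bound of \Cref{recursive_formula} applied at $n=r+1$, the bound $f(r,r,k)\le 2^{r-1}<F(r,r,k)$ from \Cref{n=r}, and the comparison $F(r-1,r,k)+F(r,r,k)<F(r,r+1,k)$ from \Cref{desigualdad F}. Your extra care in routing the odd-rank term $f(2k+1,2k+2,k)$ through \Cref{r_odd_max_ort} at the base case matches exactly how the paper handles it inside \Cref{2k+2_2k+3}.
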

\begin{proof}
 We will prove the lemma by induction on $r$.  The result follows for $r=2k+2$ by Remark \ref{2k+2_2k+3}. Now, assume by induction that the lemma holds for $r-1\ge 2k+2$. By Corollary \ref{recursive_formula}, $f(r,r+1,k) \le f(r-1,r,k)+f(r,r,k)< F(r-1,r,k)+ F(r,r,k)$, where the last inequality holds by induction hypothesis and Remark \ref{n=r}. Therefore, $f(r,r+1,k)< F(r,r+1,k)$ by Remark \ref{desigualdad F}.
\end{proof}

\begin{lemma}\label{hi1}
Let  $n\geq 2k+3\ge 5$, then
$f(2k+2,n,k)\leq F(2k+2,n,k)$.
\end{lemma}
\begin{proof}
The result follows for $n=2k+3$ by Remark \ref{2k+2_2k+3}. Now, assume by induction that the result holds for $n-1\ge 2k+3$. By Corollary \ref{recursive_formula}, $f(2k+2,n,k) \le f(2k+1,n-1,k)+f(2k+2,n-1,k)\le 2+ F(2k+2,n-1,k)=2(n-2k-2)+ {2k+2\choose k+1}+2^{2k+1}=F(2k+2,n,k)$, where the second inequality holds by Theorem \ref{r_odd_max_ort} and induction hypothesis, concluding the proof.
\end{proof}

We are now ready to prove the next theorem.
\begin{theorem}\label{thm_asymptotics}
Let $r\geq 2k+2\ge 4$ and  $n\geq r+1\ge 5$, then
\begin{center}
$f(r,n,k)\leq F(r,n,k)$
\end{center}
\end{theorem}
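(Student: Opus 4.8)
The plan is to prove the bound $f(r,n,k)\leq F(r,n,k)$ by a two-dimensional induction on the pair $(r,n)$, using the recursive upper bound $f(r,n,k)\leq f(r-1,n-1,k)+f(r,n-1,k)$ from \Cref{recursive_formula} as the engine. All of the auxiliary lemmas in the excerpt have been arranged precisely to supply the base cases and the inductive step for this scheme, so the proof is essentially a matter of assembling them in the right order. First I would fix $k\geq 1$ and induct on $r$ starting from $r=2k+2$, and within each fixed $r$ induct on $n$ starting from $n=r+1$.

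For the base of the outer induction, namely $r=2k+2$, \Cref{hi1} already gives $f(2k+2,n,k)\leq F(2k+2,n,k)$ for every $n\geq 2k+3$, and \Cref{r+1} (or directly \Cref{2k+2_2k+3}) handles $n=r+1=2k+3$; this settles the entire row $r=2k+2$. For the inductive step, I would assume the claim holds for $r-1$ (all relevant $n$) and for the rank $r$ at the previous value $n-1$. The base of the inner induction at $n=r+1$ is exactly \Cref{r+1}, which states $f(r,r+1,k)<F(r,r+1,k)$. For the inductive step at $n\geq r+2$, apply \Cref{recursive_formula} to get
\begin{equation*}
f(r,n,k)\leq f(r-1,n-1,k)+f(r,n-1,k).
\end{equation*}
The first summand is bounded by $F(r-1,n-1,k)$ using the outer induction hypothesis (rank $r-1$), and the second by $F(r,n-1,k)$ using the inner induction hypothesis (same rank $r$, smaller $n$). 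Finally \Cref{desigualdad F}, which asserts $F(r-1,n-1,k)+F(r,n-1,k)<F(r,n,k)$, closes the step and yields $f(r,n,k)\leq F(r,n,k)$.

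I would be somewhat careful about the bookkeeping of which hypotheses are available when, since the induction is genuinely two-dimensional: the step at $(r,n)$ consumes the value at $(r-1,n-1)$ (covered by the outer hypothesis on rank) and at $(r,n-1)$ (covered by the inner hypothesis on $n$). One must verify that the indices entering \Cref{desigualdad F}, \Cref{r+1}, and \Cref{hi1} all satisfy their stated ranges, i.e. $r-1\geq 2k+2$ in the step (so that the rank-$(r-1)$ hypothesis is legitimate and not below the allowed range) and $n-1\geq r+1$ for the inner step. The cleanest way to organize this is to state the outer induction as: the claim holds for all $n\geq r+1$ at rank $r$, and proceed by induction on $r\geq 2k+2$, using the inner induction on $n$ inside each rank.

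The proof presents no serious analytic obstacle, because the delicate inequality $F(r-1,n-1,k)+F(r,n-1,k)<F(r,n,k)$ has already been isolated and proved as \Cref{desigualdad F} (itself resting on the binomial estimate of \Cref{ref1}). The only real point of care is ensuring the induction is well-founded and that every invocation of the supporting lemmas stays within its hypotheses; the rank-$(2k+1)$ input $f(r-1,\cdot,k)$ never arises in the recursion for rank $r\geq 2k+2$ because the outer induction bottoms out at $r=2k+2$ via \Cref{hi1} rather than by descending into rank $2k+1$. Thus the main thing to get right is the logical structure of the double induction rather than any computation.
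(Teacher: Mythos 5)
Your proposal is correct and follows essentially the same route as the paper: a double induction on $(r,n)$ with \Cref{hi1} settling the row $r=2k+2$, \Cref{r+1} settling the column $n=r+1$, and the inductive step combining \Cref{recursive_formula} with \Cref{desigualdad F}. Your bookkeeping of which hypothesis covers $(r-1,n-1)$ versus $(r,n-1)$ matches the paper's argument exactly.
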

\begin{proof}
We shall prove the theorem by induction on $r$ and $n$. On the one hand, the result follows for $r=2k+2\geq 4$ and every $n\ge r+1\geq 5$  by Lemma \ref{hi1}. On the other hand, the result follows for  $n=r+1\ge 5$ and every $r\ge 2k+2\ge 4$ by Lemma \ref{r+1}. So, assume by induction that the result is true for $r-1\ge 2k+2$ and that for $r$ the result holds for up to $n-1\ge r+1$.

By Corollary \ref{recursive_formula} we obtain that $f(r,n,k)\leq f(r-1,n-1,k)+f(r,n-1,k)$. Now, since $f(r-1,n-1,k)\le F(r-1,n-1,k)$  and $f(r,n-1,k)\le F(r,n-1,k)$ by the induction  hypotheses  on $r$ and $n$, respectively, we conclude that $$f(r,n,k)\leq F(r-1,n-1,k)+F(r,n-1,k)<F(r,n,k),$$
where the last inequality is due to Remark \ref{desigualdad F}, concluding the proof.
\end{proof}
\medskip

%In \cite{HKM24} proved that $c_r(n,k)=2\sum\limits_{i=0}^{r-1-2k} \binom{n-1}{i}$ (Equation (\ref{formula-ciclico})) for $n$ large enough.

 %Now, firstly recall that given integers $n\ge k\ge 0$, then ${n \choose k}\rightarrow \dfrac{n^k}{k!} $ as $n\rightarrow \infty$. Secondly, observe that if we fix $r$ and $k$, then $F(r,n,k)\rightarrow \dfrac{2n^{r-1-2k}}{(r-1-2k)!} $ as $n\rightarrow \infty$. Thirdly and finally, notice that $F(r,n,k)\rightarrow 2{n \choose r-1-2k}+ O(2n^{r-1-2k})$ as $n\rightarrow \infty$. Hence,   Conjecture \ref{conjkRoudneff} holds asymptotically  for fixed $r\ge4$ and $k\ge1$ by Theorem \ref{thm_asymptotics}.

%Finally,  notice that $F(r,n,k)\rightarrow \dfrac{2n^{r-1-2k}}{(r-1-2k)!} \rightarrow 2{n \choose r-1-2k}$ as $n\rightarrow \infty$. On the other hand, we have  by Equation (\ref{formula-ciclico}) that $c_r(n,k)=2\sum\limits_{i=0}^{r-1-2k} \binom{n-1}{i}\rightarrow 2{n \choose r-1-2k}+ O(2n^{r-1-2k})$ as  $n\rightarrow \infty$. Hence,   Conjecture \ref{conjkRoudneff} holds asymptotically  for fixed $r\ge4$ and $k\ge1$ by Theorem \ref{thm_asymptotics} when $n\rightarrow \infty$.

Finally,  notice that for fixed $r$ and $k$, we have that $F(r,n,k)\le c_r(n,k)$  if $n\rightarrow \infty$  since $$F(r,n,k)\rightarrow \dfrac{2n^{r-1-2k}}{(r-1-2k)!} \rightarrow 2{n \choose r-1-2k}$$ and  $$c_r(n,k)=2\sum\limits_{i=0}^{r-1-2k} \binom{n-1}{i}\rightarrow 2{n \choose r-1-2k}+ O(2n^{r-1-2k})$$ as $n\rightarrow \infty$. 
\\

Hence,   Conjecture \ref{conjkRoudneff} holds asymptotically  when $n\rightarrow \infty$ by Theorem \ref{thm_asymptotics}.

\begin{corollary}\label{asintotico_k-Roudf}
For fixed $k\ge 1$ and $r\geq 2k+2\ge 4$, 
    $$f(r,n,k)\le 2{n \choose r-1-2k}+ O(2n^{r-1-2k})$$
as $n\rightarrow \infty$.
\end{corollary}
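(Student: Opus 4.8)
The plan is to obtain the corollary as an essentially immediate consequence of Theorem \ref{thm_asymptotics}, whose bound $f(r,n,k)\le F(r,n,k)$ already carries all the combinatorial content; what remains is a purely asymptotic unwinding of $F$ for fixed $r$ and $k$ as $n\to\infty$. First I would rewrite $F$ so that its only $n$-dependence is exposed. Setting the constant $c:=\dbinom{r}{k+1}+2^{r-1}-r$ (which depends only on $r$ and $k$), the definition of $F$ becomes $F(r,n,k)=\dfrac{2(n+c)^{r-1-2k}}{(r-1-2k)!}$, so the whole task reduces to expanding a single power of $n+c$.

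Next I would apply the binomial theorem to $(n+c)^{r-1-2k}$. Since $c$ is constant and the exponent $m:=r-1-2k\ge 1$ is fixed, $(n+c)^{m}=n^{m}+O(n^{m-1})$, whence
$$F(r,n,k)=\frac{2\,n^{r-1-2k}}{(r-1-2k)!}+O\!\left(n^{r-2-2k}\right).$$
I would then record the matching expansion of the target quantity via the falling-factorial form of the binomial coefficient,
$$2\dbinom{n}{r-1-2k}=\frac{2\,n(n-1)\cdots\bigl(n-(r-2-2k)\bigr)}{(r-1-2k)!}=\frac{2\,n^{r-1-2k}}{(r-1-2k)!}+O\!\left(n^{r-2-2k}\right),$$
so that $F(r,n,k)$ and $2\dbinom{n}{r-1-2k}$ share the identical leading term and differ only by a contribution of order $n^{r-2-2k}$. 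Combining this with $f(r,n,k)\le F(r,n,k)$ yields $f(r,n,k)\le 2\dbinom{n}{r-1-2k}+O(n^{r-2-2k})$, which is the asserted estimate.

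Finally, to justify the phrase that $k$-Roudneff's conjecture holds \emph{asymptotically}, I would supply the lower side of the sandwich. Since $C_r(n)$ is one particular rank $r$ oriented matroid on $n$ elements and $f$ is a maximum, $c_r(n,k)\le f(r,n,k)$; and by Equation (\ref{formula-ciclico}), valid once $n\ge 2(r-k)+1$, the value $c_r(n,k)=2\sum_{i=0}^{r-1-2k}\dbinom{n-1}{i}$ again has leading term $\frac{2n^{r-1-2k}}{(r-1-2k)!}$. Thus $c_r(n,k)\le f(r,n,k)\le F(r,n,k)$ with all three quantities asymptotic to $2\dbinom{n}{r-1-2k}$, so the alternating oriented matroid is asymptotically extremal.

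I expect no genuine obstacle in this corollary: the substantive work lies in Theorem \ref{thm_asymptotics}, which itself rests on the recursion of Corollary \ref{recursive_formula} and the inductive domination of $f$ by $F$. The only point requiring a little care is the bookkeeping of the constant $c$ and the explicit verification that the subleading contributions from $(n+c)^{r-1-2k}$ and from the falling factorial in $2\dbinom{n}{r-1-2k}$ are both truly of order $n^{r-2-2k}$, so that the $O$-term in the statement is unambiguous.
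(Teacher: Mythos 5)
Your proposal is correct and follows essentially the same route as the paper: both deduce the corollary directly from Theorem \ref{thm_asymptotics} by expanding $F(r,n,k)=\frac{2(n+c)^{r-1-2k}}{(r-1-2k)!}$ for fixed $r,k$ and matching its leading term $\frac{2n^{r-1-2k}}{(r-1-2k)!}$ with that of $2\binom{n}{r-1-2k}$ (and of $c_r(n,k)$). Your version is merely a little more explicit about the constant $c$ and the order of the error term than the paper's brief asymptotic comparison.
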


\section{Conclusions}\label{Conclusions}

This article is a contribution towards $k$-Roudneff's conjecture (Conjecture \ref{conjkRoudneff}). In Section \ref{LOM_k-Roudf} we prove it for LOMs with even rank $r=2k+2\ge 4$ and $n\geq 3r-4$ (Theorem \ref{rparkmax}) and for some low ranks with the aid of the computer, the most important computational results being the cases $r=7$, $k=2$, $n\ge 11$ (Corollary \ref{2-roudneff}) and  $r=9$, $k=3$ and $n\ge 12$ (Corollary \ref{3-roudneff}).  %(Subsection \ref{computer r=7_k=2}).  
We also provide a general upper bound of $f_{\M}(r,n,k)$ when $\M$ is a LOM  (Theorem \ref{theo-cota-sup}).  In Section \ref{asymptotics} we prove that for fixed $r\ge 4$ and $k\ge 1$ and any %fixed rank $rge 4$ 
oriented matroid on $n$ elements, $k$-Roudneff's conjecture holds asymptotically as $n\rightarrow \infty$ and thus giving more credit to the conjecture (Corollary \ref{asintotico_k-Roudf}).

%Comentar que el caso r=8,k=2 se necesita probarlo para n=13 y eso es mucho... (LO DIJE EN LA PARTE DE r=8)
%Comentar que el caso r=9,k=2 se necesita probarlo para n=15 y eso es mucho...(LO DIJE EN LA PARTE DE r=9)

\subsubsection*{Acknowledgements.}
 R. Hern\'andez-Ortiz and L.\ P.\ Montejano were supported by SGR Grant 2021 00115 and by the project HERMES, funded by INCIBE and the European Union NextGenerationEU/PRTR.
L. P Montejano was supported by the Ministry of Education and Science, Spain, under the project PID2023-146925OB-I00 and by the proyect ACoGe: PID2022-137283NB-C22.
\bibliographystyle{my-siam}
\bibliography{lit}

@article{supplemental_data,
    Xauthor = {},
    title  = {\textnormal{Supplemental source code and data}}, 
    note   = { 
    \url{https://github.com/Rangel-h/Ovector}},
}

@article{garcia2023number,
  title={On the number of vertices of projective polytopes},
  author={Garc{\'\i}a-Col{\'\i}n, Natalia and Montejano, Luis Pedro and Alfons{\'\i}n, Jorge Luis Ram{\'\i}rez},
  journal={Mathematika},
  volume={69},
  number={2},
  pages={535--561},
  year={2023},
  publisher={Wiley Online Library}
}

@Article{R91,
 Author = {Jean-Pierre {Roudneff}},
 Title = {{Cells with many facets in arrangements of hyperplanes}},
 FJournal = {{Discrete Mathematics}},
 Journal = {{Discrete Math.}},
 ISSN = {0012-365X},
 Volume = {98},
 Number = {3},
 Pages = {185--191},
 Year = {1991},
 Publisher = {Elsevier (North-Holland), Amsterdam},
 Language = {English},
 DOI = {10.1016/0012-365X(91)90375-C},
 MSC2010 = {52C35 51M20},
 Zbl = {0760.52008}
}

@Article{GL15,
 Author = {Natalia {Garc\'{\i}a-Col\'{\i}n} and David {Larman}},
 Title = {{Projective equivalences of \(k\)-neighbourly polytopes}},
 FJournal = {{Graphs and Combinatorics}},
 Journal = {{Graphs Comb.}},
 ISSN = {0911-0119},
 Volume = {31},
 Number = {5},
 Pages = {1403--1422},
 Year = {2015},
 Publisher = {Springer Japan, Tokyo},
 Language = {English},
 DOI = {10.1007/s00373-014-1474-5},
 MSC2010 = {52B11 52B35 52C35 52C40},
 Zbl = {1327.52015}
}

@Article{R99,
 Author = {J. L. {Ram\'{\i}rez Alfons\'{\i}n}},
 Title = {{Cyclic arrangements and Roudneff's conjecture in the space}},
 FJournal = {{Information Processing Letters}},
 Journal = {{Inf. Process. Lett.}},
 ISSN = {0020-0190},
 Volume = {71},
 Number = {5-6},
 Pages = {179--182},
 Year = {1999},
 Publisher = {Elsevier (North-Holland), Amsterdam},
 Language = {English},
 DOI = {10.1016/S0020-0190(99)00115-5},
 MSC2010 = {52C35 68U05},
 Zbl = {1003.52012}
}

@Article{BBLP95,
 Author = {I. {B\'ar\'any} and H. {Bunting} and D. G. {Larman} and J. {Pach}},
 Title = {{Rich cells in an arrangement of hyperplanes}},
 FJournal = {{Linear Algebra and its Applications}},
 Journal = {{Linear Algebra Appl.}},
 ISSN = {0024-3795},
 Volume = {226-228},
 Pages = {567--575},
 Year = {1995},
 Publisher = {Elsevier (North-Holland), New York, NY},
 Language = {English},
 DOI = {10.1016/0024-3795(95)00244-L},
 MSC2010 = {52C35},
 Zbl = {0831.52007}
}

@Article{S88,
 Author = {Sturmfels, Bernd},
 Title = {Neighborly polytopes and oriented matroids},
 FJournal = {European Journal of Combinatorics},
 Journal = {Eur. J. Comb.},
 ISSN = {0195-6698},
 Volume = {9},
 Number = {6},
 Pages = {537--546},
 Year = {1988},
 Language = {English},
 DOI = {10.1016/S0195-6698(88)80050-7},
 Keywords = {05B35},
 zbMATH = {4134061},
 Zbl = {0693.05019}
}

@Article{Cor80,
 Author = {Cordovil, Raul},
 Title = {Sur l'{\'e}valuation {{\(t(M;2,0)\)}} du polyn{\^o}me de {Tutte} d'un matroide et une conjecture de {B}. {Gr{\"u}nbaum} r{\'e}lative aux arrangements de droites du plan},
 FJournal = {European Journal of Combinatorics},
 Journal = {Eur. J. Comb.},
 ISSN = {0195-6698},
 Volume = {1},
 Pages = {317--322},
 Year = {1980},
 Language = {French},
 DOI = {10.1016/S0195-6698(80)80031-X},
 Keywords = {05B35,51E20},
 zbMATH = {3715589},
 Zbl = {0457.05019}
}

@article{HOKMS23,
      title={Roudneff's Conjecture in Dimension $4$}, 
      author={Rangel Hernández-Ortiz and Kolja Knauer and Luis Pedro Montejano and Manfred Scheucher},
      year={2023},
      journal={arXiv:2303.14212}
}

@Article{CS85,
 Author = {Cordovil, Raul and Silva, Ilda P.},
 Title = {A problem of {McMullen} on the projective equivalences of polytopes},
 FJournal = {European Journal of Combinatorics},
 Journal = {Eur. J. Comb.},
 ISSN = {0195-6698},
 Volume = {6},
 Pages = {157--161},
 Year = {1985},
 Language = {English},
 DOI = {10.1016/S0195-6698(85)80006-8},
 Keywords = {52Bxx,52A01},
 zbMATH = {3915110},
 Zbl = {0573.52009}
}

@book{BVSWZ99,
    author     = {A. Bj{\"o}rner and M. {Las Vergnas}  and B. Sturmfels and N. White and G. M. Ziegler},
    title      = {{Oriented Matroids, 2nd edition}}, 
    series     = {Encyclopedia of Mathematics and its Applications}, 
    volume     = 46,
    edition    = {2}, 
    publisher  = {Cambridge University Press}, 
    year       = {1999}, 
    doi        = "10.1017/CBO9780511586507",
}

@article{Car,
    author = {Carathéodory, C.},
    journal = {Mathematische Annalen},
    pages = {95--115},
    title = {{\"Uber den Variabilit\"atsbereich der Koeffizienten von Potenzreihen, die gegebene Werte nicht annehmen.}},
    url = {http://eudml.org/doc/158305},
    volume = {64},
    year = {1907},
}

@article{FL78,
    author = {Jon Folkman and Jim Lawrence},
    title = {Oriented matroids},
    journal = {Journal of Combinatorial Theory, Series B},
    volume = {25},
    number = {2},
    pages = {199--236},
    year = {1978},
    doi = {10.1016/0095-8956(78)90039-4},
}

@article{FR01,
    author = {D. Forge and J. L. {Ram\'irez Alfons\'in}},
    title = {On Counting the $k$-face Cells of Cyclic Arrangements},
    journal = {European Journal of Combinatorics},
    volume = {22},
    number = {3},
    pages = {307--312},
    year = {2001},
    doi = {10.1006/eujc.2000.0462},
}

@article{L82,
    author = {Jim Lawrence},
    title = {Oriented matroids and multiply ordered sets},
    journal = {{Linear Algebra and its Applications}},
    volume = {48},
    pages = {1--12},
    year = {1982},
    doi = {10.1016/0024-3795(82)90094-5},
}

@article{Mac, 
    author={McMullen, P.}, 
    title={The maximum numbers of faces of a convex polytope}, 
    volume={17}, 
    number={2}, 
    journal={Mathematika}, 
    publisherX={London Mathematical Society}, 
    year={1970}, 
    pages={179--184},
    doi={10.1112/S0025579300002850}
}

@article{MR15,
  author    = {Luis Pedro Montejano and Jorge L. {Ram{\'{\i}}rez Alfons{\'{\i}}n}},
  title     = {{Roudneff's Conjecture for Lawrence Oriented Matroids}},
  journal   = {Electronic Journal of Combinatorics},
  volume    = {22},
  number    = {2},
  pages     = {Paper \#P2.3, 4 pages},
  year      = {2015},
  doi       = {10.37236/4811},
}

@article{RS88,
    author = {Roudneff, Jean-Pierre and Sturmfels, Bernd},
    title = {Simplicial cells in arrangements and mutations of oriented matroids},
    journal = {Geometriae Dedicata},
    volume = {27},
    number = {2},
    pages = {153--170},
    year = {1988},
    doi = {10.1007/BF00151346},
}

@article{lawrence1981unions,
  title={Unions of oriented matroids},
  author={Lawrence, Jim and Weinberg, Louis},
  journal={Linear Algebra and its Applications},
  volume={41},
  pages={183--200},
  year={1981},
  publisher={Elsevier}
}

@Article{MP15,
    Author = {Miyata, Hiroyuki and Padrol, Arnau},
    Title = {Enumerating neighborly polytopes and oriented matroids},
    FJournal = {Experimental Mathematics},
    Journal = {Exp. Math.},
    ISSN = {1058-6458},
    Volume = {24},
    Number = {4},
    Pages = {489--505},
    Year = {2015},
    Language = {English},
    DOI = {10.1080/10586458.2015.1015084},
    Keywords = {52B11,52C40},
    zbMATH = {6496711},
    Zbl = {1370.52018}
}

@Article{P13,
    Author = {Padrol, Arnau},
    Title = {Many neighborly polytopes and oriented matroids},
    FJournal = {Discrete \& Computational Geometry},
    Journal = {Discrete Comput. Geom.},
    ISSN = {0179-5376},
    Volume = {50},
    Number = {4},
    Pages = {865--902},
    Year = {2013},
    Language = {English},
    DOI = {10.1007/s00454-013-9544-7},
    Keywords = {52B05,52C40,52B12},
    zbMATH = {6249844},
    Zbl = {1283.52013}
}

@Article{Dre85,
 Author = {Dress, A.W.M.},
 Title = {Chirotopes and oriented matroids},
 FJournal = {Bayreuther Mathematische Schriften},
 Journal = {Bayreuther Mathematische Schriften},
 Volume = {2},
 Pages = {14--68},
 Year = {1985},
}

@Article{Gut65,
 Author = {Gutierrez Novoa, Lino},
 Title = {On n-ordered sets and order completeness},
 FJournal = {Pacific Journal of Mathematics},
 Journal = {Pac. J. Math.},
 ISSN = {1945-5844},
 Volume = {15},
 Pages = {1337--1345},
 Year = {1965},
 Language = {English},
 DOI = {10.2140/pjm.1965.15.1337},
 zbMATH = {3222131},
 Zbl = {0137.02202}
}

@Article{R01,
 Author = {J. L. {Ram\'{\i}rez Alfons\'{\i}n}},
 Title = {{Lawrence oriented matroids and a problem of McMullen on projective equivalences of polytopes}},
 FJournal = {{European Journal of Combinatorics}},
 Journal = {{Eur. J. Comb.}},
 ISSN = {0195-6698},
 Volume = {22},
 Number = {5},
 Pages = {723--731},
 Year = {2001},
 Publisher = {Elsevier (Academic Press), London},
 Language = {English},
 DOI = {10.1006/eujc.2000.0492},
 MSC2010 = {52C40},
 Zbl = {0984.52019}
}

@Article{L72,
 Author = {D. G. {Larman}},
 Title = {{On sets projectively equivalent to the vertices of a convex polytope}},
 FJournal = {{Bulletin of the London Mathematical Society}},
 Journal = {{Bull. Lond. Math. Soc.}},
 ISSN = {0024-6093},
 Volume = {4},
 Pages = {6--12},
 Year = {1972},
 Publisher = {John Wiley \& Sons, Chichester; London Mathematical Society, London},
 Language = {English},
 DOI = {10.1112/blms/4.1.6},
 MSC2010 = {52Bxx 52A40},
 Zbl = {0248.52010}
}

@Article{HKM24,
author = {Rangel Hernández-Ortiz and Kolja Knauer and Luis Pedro Montejano},
title = {On k-neighborly reorientations of oriented matroids},
journal = {European Journal of Combinatorics},
volume = {118},
pages = {103938},
year = {2024},
issn = {0195-6698},
doi = {https://doi.org/10.1016/j.ejc.2024.103938},
url = {https://www.sciencedirect.com/science/article/pii/S0195669824000234}
}

\end{document}